\numberwithin{equation}{section}
\theoremstyle{plain}
\newtheorem{theorem}[equation]{Theorem}
\newtheorem*{theorem*}{Theorem}
\newtheorem{lemma}[equation]{Lemma}
\newtheorem*{lemma*}{Lemma}
\newtheorem{proposition}[equation]{Proposition}
\newtheorem*{proposition*}{Proposition}
\newtheorem{propdef}[equation]{Proposition and Definition}
\newtheorem{corollary}[equation]{Corollary}
\newtheorem*{corollary*}{Corollary}
\theoremstyle{definition}
\newtheorem{definition}[equation]{Definition}
\newtheorem{observation}[equation]{Observation}
\theoremstyle{remark}
\newtheorem{remark}[equation]{Remark}
\newtheorem{example}[equation]{Example}
\newtheorem{property}[equation]{Property}
\newcommand*{\defeq}{\mathrel{\vcentcolon=}}
\newcommand{\N}{\mathbb N}
\newcommand{\R}{\mathbb R}
\newcommand{\C}{\mathbb C}
\newcommand*{\base}[1][G]{{#1}^{(0)}}
\newcommand{\nb}{\nobreakdash} 
\newcommand{\7}{\backslash}
\newcommand{\inverse}{^{-1}}
\newcommand{\homeo}{\approx}
\newcommand{\iso}{\simeq}
\newcommand{\FGd}{\mathrm{\Pi}_1}
\newcommand{\FGp}{\mathrm{\pi}_1}
\newcommand{\TCat}{\mathcal{T}}
\newcommand{\GCat}{\mathcal{G}^{\mathrm{T}}}
\newcommand{\GCatAlg}{\mathcal{G}^{\mathrm{A}}}
\newcommand{\Cont}{\mathrm{C}}
\newcommand{\Basic}{\mathcal{B}}
\newcommand{\UInt}{\mathbb{I}}
\newcommand{\Sph}{\mathbb{S}}
\newcommand{\PathS}[1]{\mathrm{P}\!#1}
\newcommand{\etale}{{\'e}tale}
\newcommand*{\Id}{\textup{id}}
\newcommand*{\Cst}{\textup C^*}
\title[Topological fundamental groupoid. I.]{Topological fundamental
  groupoid. I.}
\author{Rohit
  Dilip Holkar} \email{rohit.d.holkar@gmail.com} \address{Department
  of Mathematics, Indian Institute of Science Education and Research
  Bhopal, Bhopal Bypass Road, Bhauri, Bhopal 462 066, Madhya Pradesh,
  India.}  \date{\today}
\author{Md Amir Hossain} \email{mdamir18@iiserb.ac.in} \address{Department
  of Mathematics, Indian Institute of Science Education and Research
  Bhopal, Bhopal Bypass Road, Bhauri, Bhopal 462 066, Madhya Pradesh,
  India.}  \date{\today}
\keywords{Fundamental groupoid, fundamental group, topological
  groupoid, topological group}
\thanks{\emph{Subject class.} 55P, 55Q, 14H30, 22E67, 22A22, 54A10, 57S}
\begin{document}
\maketitle{}

\begin{abstract}
  We show that the fundamental groupoid~\(\Pi_1(X)\) of a locally path
  connected semilocally simply connected space~\(X\) can be equipped
  with a \emph{natural} topology so that it becomes a topological
  groupoid; we also justify the necessity and minimality of these two
  hypotheses on~\(X\) in order to topologise the fundamental
  groupoid. We find that contrary to a belief---especially among the
  Operator Algebraists---the fundamental groupoid is not
  {\etale}. Further, we prove that the fundamental groupoid of a
  topological group, in particular a Lie group, is a
  \emph{transformation groupoid}; again, this result disproves a
  standard belief that the fundamental groupoids are \emph{far} away
  from being transformation groupoids. We also discuss the point-set
  topology on the fundamental groupoid with the intention of making it
  a locally compact groupoid.
\end{abstract}

\setcounter{tocdepth}{1}
\tableofcontents{}

\section*{Introduction}

This article has grown out of the curiosity of studying fundamental
groupoids as topological groupoids. The goal of the project is two
fold: firstly, to describe when a fundamental groupoid is locally
compact and can be equipped with a Haar system in order to study its
\(\Cst\)\nb-algebra. Secondly, to study actions of this topological
groupoid. The current article is the first goal partially, in the
sense that it describes the topology on the fundamental groupoid. The
Haar system on it is discussed
in~\cite{Holkar-Hossain2023Top-FGd-III}, and
in~\cite{Holkar-Hossain2023Top-FGd-II} we identify a certain
\emph{natural} action category of the fundamental groupoid~\(\FGd(X)\)
with that of covering spaces of~\(X\).

When it comes to topological groupoids, the \(\Cst\)\nb-algebraists
have extensively studied or used the following groupoids:
transformation groupoid of a group(oid) action; groupoid associated
with an inverse semigroup; groupoid associated with a graph; and
holonomy groupoid of a foliation. However, the fundamental groupoid
seems to have failed to gather sufficient attention of the Operator
Algebraist. The negligence of the fundamental groupoid by the Operator
Algebraist is justifiable as Muhly, Renault and Williams show in their
famous article~\cite{Muhly-Renault-Williams1987Gpd-equivalence} that
this groupoid is equivalent to the fundamental group of the
space. Therefore, for \(\Cst\)\nb-algebraic purposes, the study of the
\(\Cst\)\nb-algebra of a fundamental groupoid can be
replaced---\emph{upto} the Morita equivalence---by that of the
corresponding fundamental group.

It seems that a \emph{standard} set of hypotheses and basic
topological results for studying topological fundamental groupoids are
missing. For example, multiple articles discussing various types of
covering spaces or path lifting properties or path spaces can be
found\cite{Fischer-Zastrow2007Gen-Uni-Coverings-and-Shape-Gp},\cite{Bogley-Sieradski1998Uni-Path-Spaces},\cite{Brodskiy-etale2012Cov-Maps-for-loc-path-connected-spaces}.
Various topologies on the covering space are studied,
e.g.~\cite{Virk-Zastrow2014Comparison-of-Topologies-on-Cov-Space}. Although
a claim about the topology of the fundamental group was falsely proved
in~\cite{Biss2002Fundamental-Gpd-Top}, similar claims (which
fortunately turn out to be correct!)  about the topology of the simply
connected covering spaces also pop-up \emph{momentarily} in standard
textbooks of Munkres~\cite[Page 45, last
paragraph]{Munkress1975Topology-book} and Spanier~\cite[Proof of
Theorem 13, page 82, \S 5, Chapter 2]{Spanier1966Alg-Top-Book}
\emph{without} any justification.

Above various topologies on covering spaces or fundamental groups, and
unjustified claims suggest multiple choices for the topology on the
fundamental groupoid. Furthermore, unjustified claims and folklores
make it challenging to choose the appropriate candidate for this
topology. On top of it, we could not find any literature that offers a
standard set of hypotheses and results for a \emph{natural} topology
on the fundamental groupoids. In the current article, we choose a
standard set of hypotheses and establish a framework for a
\emph{natural} topology on fundamental groupoids.

\subsection*{The topology on the fundamental groupoid}
\label{sec:topol-fund-group}

In general, the fundamental groupoid of a path connected (and even
locally path connected) space may fail to be topological with a
natural choice of topology; this topology is the quotient topology
induced by the compact-open topology on the path space. The reason is
that if the fundamental groupoid were a topological groupoid with this
topology, then isotropy of a point---which is the fundamental group of
the space---must be a topological group. Fabel shows that, the
fundamental group of the Hawaiian Earring fails to be
topological~\cite{Fabel2011HE-Counterexample}.

Thus the question is: what are the \emph{optimal} assumptions on the
topology of a space so that its fundamental groupoid is topological?
Needless to say that there are certain natural choices for the
topology on the fundamental groupoid for which these assumptions are
sought after! In fact, there are more than one candidates for these
natural topologies!

Once the assumptions on the topology of the space are set, the next
task is to compare the different natural topologies on the fundamental
groupoid. Fortunately, all these topologies agree!

Next, we discuss the last two paragraphs in detail, and in the end
also give two topological reasons for studying the \emph{topological}
fundamental groupoids.

\medskip

\paragraph{\itshape The optimal hypotheses:}

It turns out that locally path connectedness and semilocally simply
connectedness of a space~\(X\) are the optimal hypotheses to
topologise the fundamental groupoid~\(\FGd(X)\). The optimality of
these assumptions is essentially justified by the existence of the
simply connected covering space of~\(X\). For the sake of clarity, we
follow Hatcher's definition of semilocally simply connected space,
see~\cite[second last paragraph on page 63]{Hatcher2002Alg-Top-Book}.

Recall the definition of the \emph{universal} covering space
(\cite{Spanier1966Alg-Top-Book}*{page 80}) which is the universal
object in the category of connected covering spaces (\cite[Chapter 2,
Sec.5]{Spanier1966Alg-Top-Book}). For example, a \emph{simply
  connected} covering space is the universal covering
space~\cite[Corollary 6.2,
Chapter~V]{Massey1991Basic-Course-in-Alg-Top}. Recall the standard
construction of a simply connected covering
space~\cite{Hatcher2002Alg-Top-Book}; in this construction, the
underlying set consists of the paths starting at a fixed point.

Now assume that~\(X\) is a path connected topological space whose
fundamental groupoid~\(\FGd(X)\) is topological; let \(s_{\FGd(X)}\)
be the source map of this groupoid. Fix \(x_0\in X\). Then we have a
contentment
\(\FGp(X,x_0) \subseteq s_{\FGd(X)}\inverse(x_0)\subseteq
\FGd(X)\). Thus, it is natural to expect that topology on the
fundamental groupoid should make the fundamental group \(\FGp(X,x_0)\)
a topological group and, we should expect that the subspace topology
should make \(s_{\FGd(X)}\inverse(x_0)\) the simply connected
cover. This expectation implies that we should assume \emph{at least}
those conditions on the topology of~\(X\) which assure existence of
the simply connected cover. What assures the existence of a simply
connected cover\footnote{It is noteworthy that
  Spanier~\cite{Spanier1966Alg-Top-Book} discusses the existence of
  the universal covering space that is not necessarily simply
  connected. The topic of various topologies on the universal covering
  space is still being discussed,
  e.g.~\cite{Fischer-Zastrow2007Gen-Uni-Coverings-and-Shape-Gp},~\cite{Virk-Zastrow2014Comparison-of-Topologies-on-Cov-Space},~\cite{Bogley-Sieradski1998Uni-Path-Spaces},~\cite{Brodskiy-etale2012Cov-Maps-for-loc-path-connected-spaces}. However,
  similar to the case of topologies on the fundamental group, we do
  not venture to discuss any of these `generalised' topologies on the
  fundamental groupoid which yield `generalised' covering spaces. %
}?

Hilton and Wylie~\cite[Example~6.6.14, pp
258--259]{Hilton-Wylie1960Intro-to-Alg-top} give an example of a space
that fails to have a covering space because it is not locally path
connected; in this example, the absence of locally path connectedness
causes the failure of the path lifting property. Thus our spaces must
be \emph{locally path connected}.

Next, Spanier~\cite[Corollary 14, Sect.5, Chapter
2]{Spanier1966Alg-Top-Book} shows that a simply connected covering
space of a path connected, locally path connected space exists
\emph{iff} the space is \emph{semilocally simply
  connected}\footnote{Instead of `semilocally simply connected',
  Spanier uses the term `semilocally 1-connected' which is defined on
  page~78 before Theorem 10 in~\cite[Sect.5, Chapter
  2]{Spanier1966Alg-Top-Book}. This is same as `semilocally simply
  connected' in our convention adopted from
  Hatcher~\cite{Hatcher2002Alg-Top-Book}.}. Thus for a path connected
space, locally path connectedness and semilocally simply connectedness
are necessary properties to enjoy a simply connected covering space,
hence a universal covering space.

In fact, it is well-known that locally path connectedness and
semilocally simply connectedness are foundational hypotheses for
assuring the existence of simply connected cover.

These two hypotheses play an important role in topologising the
\emph{fundamental group}: Theorem~2
in~\cite{Fabel2007Top-Fund-Gp-of-Metric-Spaces} and
Theorem~1.1~\cite{Calcut-McCarthy2009Top-Fund-Gp} highlights the
important of these hypotheses in order to topologise the fundamental
group---in a certain obvious way\footnote{Some recent articles
  topologises the fundamental group in different ways, for example
  see~\cite{Virk-Zastrow2014Comparison-of-Topologies-on-Cov-Space},~\cite{Bogley-Sieradski1998Uni-Path-Spaces}. These
  topologies induce different types of continuities of the group
  product. All these continuities are weaker than the usual
  simulteneous continuity of the product \(G\times G\to G\) in both
  variable. We do not enter these notions of \emph{nearly} topological
  groups; our goal is to make the fundamental group a topological
  group in the usual sense.}---of a locally path connected metrizable
spaces.

The above discussion should have made the naturality, necessity and
optimality of local path connectedness and semilocally simply
connectedness in order to topologise the fundamental groupoid. Hence,
we stick to such spaces.

\medskip
  
\paragraph{\itshape The three topologies on \(\FGd(X)\):}

After fixing the basic assumptions, now first discuss two
\emph{natural} or \emph{obvious} choices of topologies on the
fundamental groupoid.
  
The first choice of the topology is insinuated by the expectation that
the universal cover becomes a subspace of the fundamental
groupoid. This topology, which we call the UC topology, is defined by
mimicking the topology in the construction of the universal
cover. Here `UC' should remind us of the universal cover. To add more
about this topology, the remnant of UC topology can be seen in
Hurewicz's
article~\cite{Hurewicz1935Homotopie-Homologie-Und-Lokaler-Zusammenang}
and the later in Dugunji's
article~\cite{Dugunji1950Topogilized-Fund-Gp}.

The other topology, called CO', is the following: the fundamental
groupoid of a space~\(X\) is a quotient set \(\Cont(I,X)\), the set of
of all continuous functions from the unit interval~\(I\) to~\(X\) by
the path homotopy equivalence relation. The function
space~\(\Cont(I,X)\) comes with a natural topology, namely, the
compact-open topology. The CO' topology is the quotient topology
on~\(\FGd(X)\) induced by the compact-open topology.

We show that for our choice of spaces, these two topologies
coincide~Proposition~\ref{prop:CO-UC-tops-same}. We also show that
this topology makes the fundamental groupoid a topological groupoid
Theorem~\ref{thm:fund-gpd-top}.  Interestingly, not these topologies
but their restrictions to the covering spaces and the fundamental
group are very well-studied.  We elaborate on this in the latter
section titled `The many topologies'.

Let~\(X\) be path connected, locally path connected and semilocally
simply connected space. Using the famous equivalence
(\cite{Muhly-Renault-Williams1987Gpd-equivalence}) between the
fundament group and fundamental groupoid via the universal covering
space~\(\tilde{X}\), one can obtain the fundamental groupoid as a
quotient of the space~\(\tilde{X}\times \tilde{X}\) by the diagonal
action of the fundamental group,
see~Proposition~\ref{exa:fund-gp-action}. This is another way of
constructing the fundamental groupoid. We consider this the
\emph{third} topology on~\(\FGd(X)\). By the way, this construction is
not new; Reinhart describes it in~\cite[Proposition
2.37]{Reinhart1983Folliations-book}. This construction also follows
directly from a well-known fact about equivalent groupoids~\cite[Lemma
2.38, pp 58--59]{Williams2019A-Toolkit-Gpd-algebra}.

\medskip

\paragraph{\itshape Uses of the topology on the fundamental groupoid:}

Topologising the fundamental groupoid has a classical motivation: the
notion of a topological group can be traced back to 1935 when Hurewicz
introduced a notion of topologised fundamental group.  Dugunji
followed this idea in
1950~\cite{Dugunji1950Topogilized-Fund-Gp}. \emph{Topological
  fundamental groupoid} is certainly a strong successor of this
classical quest. Moreover, standardising a topological groupoid also
standardises the notion of topological fundamental group.

Apart from our original motivation, here are two instances which show
that the topological fundamental groupoid can be a better invariant:
in~\cite{Fabel2006Top-Fundamental-Gpd-dist-iso-homotopy-gps}, Fabel
shows that the topologised fundamental group is a finer invariant than
the algebraic one as it can distinguish spaces with isomorphic
homotopy groups. In Remark~\ref{rem:Top-FGd-stroger-than-Alg-FGp}, we
given an example of a map of spaces~\(f\) which induces an isomorphism
of algebraic fundamental groupoids but the map is not an isomorphism
of topological fundamental groupoids. Thus, topological fundamental
groupoid is a stronger invariant than the algebraic one.

\subsection*{Some other issues}

In this section, we describe the problematic or unclear issues we
faced about fundamental groupoid and our solutions to it.  \medskip

\paragraph{\itshape The many topologies:}

Recall the earlier mentioned UC and CO' topologies on the fundamental
groupoid in Proposition~\ref{prop:CO-UC-tops-same}.
Interestingly---in the context of path connected, locally path
connected and semilocally simply connected spaces, in particular, for
manifolds---this Proposition~\ref{prop:CO-UC-tops-same} \emph{seems} a
folklore, possibly, known to the experts. Our literature survey makes
us believe the last remark: when it comes to standard textbooks, all
authors Spanier~\cite{Spanier1966Alg-Top-Book},
Dieck~\cite{tom-Dieck2008Alg-Top-book},
Munkres~\cite{Munkress1975Topology-book},
Hatcher~\cite{Hatcher2002Alg-Top-Book},
Massey~\cite{Massey1991Basic-Course-in-Alg-Top}\cite{Massey1977Alg-Top-Intro-reprint},
Rotman~\cite{Rotman1988Alg-Top-Book} and
Fulton~\cite{Fulton1995Alg-Top-book} use the UC topology for
constructing the \emph{universal covering space}. Spanier~\cite[Proof
of Thorem 13, page 82, \S 5, Chapter 2]{Spanier1966Alg-Top-Book} and
Munkres~\cite[Page 45, last paragraph]{Munkress1975Topology-book}
mention the existence of~CO' topology on the universal cover but do
not offer more details. At the same time, in case of the
\emph{fundamental groupoid}, Brown~\cite[Lemma
2]{Brown-Danesh-1975-Top-FG-1} uses the UC topology on the groupoid.
Whereas, Reinhart or Muhly choose the CO' one, see~\cite[Proposition
2.37 and the discussion preceding it]{Reinhart1983Folliations-book}
or~\cite[Example~5.33.4]{Muhly1999Coordinates}.  A relatively recent
article of Fischer and Zastrow~\cite[Lemma
2.1]{Fischer-Zastrow2007Gen-Uni-Coverings-and-Shape-Gp} show the
equivalence of these two topologies for the universal cover.  However,
we could not find an explicit proof that these topologies on the
\emph{fundamental groupoid} are same. Rather, we suspect that the
curent article could be the first one to pen the proof. Our proof of
Proposition~\ref{prop:CO-UC-tops-same} is essentially based
on~\cite[Lemma
2.1]{Fischer-Zastrow2007Gen-Uni-Coverings-and-Shape-Gp}.

Although, comparison between the UC and CO' topologies on the
fundamental groupoid seems unavailable, their comparison on the
universal cover are well-studied,
see~\cite{Virk-Zastrow2014Comparison-of-Topologies-on-Cov-Space}.

In contrast to the comparison of the of the UC and CO' topologies on
the fundamental groupoid, their restrictions to the universal cover
and the fundamental group are well-studies well-known topics, for
example the article of Virk and
Zastrow~\cite{Virk-Zastrow2014Comparison-of-Topologies-on-Cov-Space}
discusses this in detail and in a general setting.

\medskip

\paragraph{\itshape A false folklore and corresponding confusion:}

The lack of enough attention and careful treatment of topological
fundamental groupoid has lead to a false folklore, which we have heard
among some Operator Algebraist. The folklore is that the fundamental
groupoid is {\etale}\footnote{In fact, the modest expectation that the
  simply connected covering space should be a subspace of the
  fundamental groupoid rules out the possibility of discreteness of
  the fibres over the range or source maps.}. This claim can also be
found in expert's work as well, for example~\cite[Example
2.1.4]{Khalkhali2013Basic-NCG-Book}.

But, interestingly, the first\footnote{As per our knowledge}
article~\cite[Example 2.3]{Muhly-Renault-Williams1987Gpd-equivalence},
written by Muhly, Renault and Williams, to mention topological
groupoid and the one which often Operator Algebraists refer to in
connection with topological fundamental groupoid does not fall for
this folklore.
  
One possible origin of this misconception could be the confusion
between a `groupo\"ide\ localement trivial' and an `{\etale}
groupoid'. Ehresmann defines the prior~\cite[page 143, second
paragraph]{Ehresmann1958Categories-Top-et-categories-diff} and the
latter one is well-known, for example see~\cite[first paragraph in \S
1]{Neshveyev2013KMS-states}. The phrase \emph{groupo\"ide\ localement
  trivial} is translated as `locally trivial groupoid'
see~\cite[second paragraph in main article on page
1]{Westman1967Locally-trivial-Cr-gpd}. The definitions of locally
trivial groupoid and \etale\ one imply that these are two distinct
classes. In fact, we notice that the fundamental topological groupoid
is locally trivial~(\cite{Brown-Danesh-1975-Top-FG-1}) but not \etale,
see~Remark~\ref{rem-Fgd-local-trivial-not-etale}.

Muhly, Renault and Williams~\cite[Example
2.3]{Muhly-Renault-Williams1987Gpd-equivalence} credit
Reinhart~\cite{Reinhart1983Folliations-book} for their example. And in
the literature of groupoids among Operator Algebraist, it is Muhly's
notes~\cite[Example~5.33.4]{Muhly1999Coordinates} where one finds an
explicit mention of a locally trivial groupoid in a way that the
reader can differentiate it from an \etale\ groupoid.

\medskip

\paragraph{\itshape A partial breach of a belief:}

The fundamental groupoid has been believed to be a `genuine'
groupoid---in the sense that it is not a transformation groupoid (or,
vaguely speaking, it is not obtained from a \emph{group}). Contrary to
this belief, we show that the fundamental groupoid of a topological
group, in particular a Lie group, is a transformation groupoid!
See~Theorem~\ref{thm:FGd-of-gp:as-tranf}.

\subsection*{Other results}
\label{sec:other-results}

Apart from establishing a standard topology on the fundamental
groupoid, we describe the point-set topology of it in
Section~\ref{sec:prop-topol-fg}. We essentially show that the
topological properties such as Hausdorffness, locally compactness and
second countability are borne by the fundamental groupoid \emph{iff}
either the underlying space of the universal covering space bears
them.

\subsection*{Organisation of the current article: }
\label{sec:current-article}

The article has three main sections.
Section~\ref{sec:notat-conv-prel} establishes notation, convention and
preliminaries for topological groupoids. We also discuss actions and
equivalences of groupoids. In Subsection~\ref{sec:comp-open-topol}, we
list or prove some useful properties of the compact-open topology.

Section~\ref{sec:fund-group-as} starts be describing and then
comparing the~UC and~CO' topologies;
Proposition~\ref{prop:CO-UC-tops-same} is the main result about this
comparison. Corollary~\ref{cor:quotient-from-PX-to-PiX-open} shows
that the range map for fundamental groupoid is open.

In Subsection~\ref{sec:topol-fund-group-1},
Theorem~\ref{thm:fund-gpd-top} proves that the fundamental groupoid is
topological; its Corollary~\ref{cor:subspace-top-on:fibre-UC-top}
shows that the topology on the fundamental groupoid is compatible with
the universal covering space and the fundamental group. Then next
Subsection~\ref{sec:functoriality} describes the functoriality of
topological fundamental groupoids. Finally,
Theorem~\ref{thm:FGd-of-gp:as-tranf} in
Section~\ref{sec:fund-group-group} proves that the fundamental
groupoid of a group is a transformation groupoid.

The last section, Section~\ref{sec:descr-fund-group}, we first give an
alternate description of the fundamental groupoid in
Proposition~\ref{prop:quotient-description-of-fund-gp}. Then in
Section~\ref{sec:prop-topol-fg} describes the point-set topology of
the fundamental groupoid in Propositions~\ref{prop:Haus-FGd},
\ref{prop:local-cpt}, \ref{prop:sec-cble} and~\ref{prop:paracpt}.

\medskip

\section{Notation, conventions and preliminaries:}
\label{sec:notat-conv-prel}

\subsection{Groupoids}
\label{sec:groupoids-prel}

We shall work with topological groupoids.

For locally compact spaces and groupoids we follow Tu's conventions
in~\cite{Tu2004NonHausdorff-gpd-proper-actions-and-K}. Thus by a
quasi-compact space we mean a space whose open covers have finite
subcovers. A quasi-compact and Hausdorff space is called
\emph{compact}. A topological space is called locally compact if every
point in it has a compact neighbourhood. Thus a locally compact space
is locally Hausdorff and hence~\(\mathrm{T}_1\).  Note that a compact
set~\(K\) is also locally compact Hausdorff. Then the open
set~\(K^0\subseteq K\), the interior of~\(K\), is a locally compact
Hausdorff space~\cite[Corollary 29.3]{Munkress1975Topology-book}. A
space is called \emph{paracompact} if it is Hausdorff and its every
open covering has a locally finite open refinement.
\begin{lemma}
  \label{lem:loc-cpt}
  Assume that~\(X\) is a locally compact space. Suppose \(x\in X\) and
  an open neighbourhood~\(U\) of~\(x\) are given. Then~\(U\) contains
  an open neighbourhood~\(V\) of~\(x\) such that~\(\overline{V}\) is
  compact and~\(\overline{V}\subseteq U\).
\end{lemma}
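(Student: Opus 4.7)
The plan is to reduce the problem to the compact Hausdorff setting by choosing a compact neighbourhood of $x$, and then to apply the regularity (indeed normality) of compact Hausdorff spaces. First I would pick a compact neighbourhood $K$ of $x$; by definition this furnishes $x\in K^{0}\subseteq K$ with $K^{0}$ open in $X$ and $K$ quasi-compact Hausdorff. Setting $W:=U\cap K^{0}$ gives an open neighbourhood of $x$ contained in both $U$ and $K$, and open also in $K$ since $K^{0}$ is open in $K$.

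Next I would work entirely inside the compact Hausdorff space $K$. Being compact Hausdorff, $K$ is normal and hence regular, so the point $x$ can be separated from the closed set $K\setminus W$: this produces an open set $V\subseteq K$ with $x\in V$ and $\overline{V}^{K}\subseteq W\subseteq K^{0}$, where $\overline{V}^{K}$ denotes the closure of $V$ in $K$. As a closed subset of the quasi-compact Hausdorff space $K$, $\overline{V}^{K}$ is compact. Moreover, since $V\subseteq K^{0}$ and $V$ is open in $K$, $V$ is open in the open subspace $K^{0}\subseteq X$, and hence open in $X$.

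Finally I would verify that the closure of $V$ taken in $X$ agrees with $\overline{V}^{K}$, so that it is both compact and contained in $U$. Using the subspace identity $\overline{V}^{K}=\overline{V}^{X}\cap K$, this reduces to checking $\overline{V}^{X}\subseteq K$. Given $y\in X\setminus\overline{V}^{K}$ I would argue in two cases: when $y\in K^{0}$, the Hausdorffness of $K^{0}$ together with the compactness of $\overline{V}^{K}\subseteq K^{0}$ separates $y$ from $\overline{V}^{K}\supseteq V$ by an open neighbourhood in $K^{0}\subseteq X$; when $y\notin K^{0}$, I would invoke local compactness of $X$ at $y$ to pick a compact neighbourhood $L$ of $y$, observe that $L\cap\overline{V}^{K}$ is compact and therefore closed in the Hausdorff space $L$, and then separate $y$ inside $L^{0}$ to obtain an open neighbourhood of $y$ in $X$ disjoint from $V$. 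The only delicate step---and the one that genuinely uses Tu's non-Hausdorff convention---is this final closure verification; the remaining steps are routine manipulations with compact Hausdorff regularity and subspace topologies.
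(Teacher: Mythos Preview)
Your reduction to a compact Hausdorff neighbourhood $K$ and the use of regularity inside $K$ are exactly the right moves, and you correctly identify that the only genuine difficulty is passing from $\overline{V}^{K}$ to $\overline{V}^{X}$. Unfortunately your Case~2 argument does not go through: the claim that $L\cap\overline{V}^{K}$ is compact is unjustified and in fact false in general. In a non-Hausdorff ambient space the intersection of two compact Hausdorff subsets need not be compact, because neither is forced to be closed in the other. Concretely, let $X$ be the line with two origins $0_{1},0_{2}$ (which is locally compact in the paper's sense), take $x=0_{1}$, $K=[-1,0)\cup\{0_{1}\}\cup(0,1]$, $V=(-\tfrac12,0)\cup\{0_{1}\}\cup(0,\tfrac12)$, and for $y=0_{2}$ take $L=[-1,0)\cup\{0_{2}\}\cup(0,1]$. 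Then $L\cap\overline{V}^{K}=[-\tfrac12,0)\cup(0,\tfrac12]$, which is not closed in the Hausdorff space $L$ and hence not compact.

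This gap is not repairable: the lemma is \emph{false} as stated. In the same example, with $U=(-1,0)\cup\{0_{1}\}\cup(0,1)$, every open $V\ni 0_{1}$ contains a punctured interval about $0$, so $0_{2}\in\overline{V}^{X}$; thus $\overline{V}^{X}\not\subseteq U$, and moreover $\overline{V}^{X}$ contains the inseparable pair $0_{1},0_{2}$ and so is not Hausdorff, hence not compact in the paper's sense. The paper's own argument has the matching lacuna: it produces $V$ with compact closure \emph{inside} the locally compact Hausdorff set $W=K^{0}\cap U$ and then tacitly identifies this with the closure in $X$. So your instinct that the closure comparison is the delicate point was exactly right; it is in fact the point where the statement itself fails without an additional Hausdorff hypothesis on $X$.
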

\begin{proof}
  Let \(K\) be a compact neighbourhood of~\(x\); then its
  interior~\(K^0\) is a \emph{nonempty} Hausdorff and locally compact
  open neighbourhood of~\(x\). Thus~\(W = K^0\cap U\) is an open
  subspace of the compact space~\(K\) containing~\(x\); hence~\(W\) is
  a locally compact Hausdorff open set in~\(X\). Therefore, \(W\)
  contains an open neighbourhood~\(V\) of~\(x\) such that
  \(\overline{V}\) is compact and contained in~\(W\). Since
  \(W\subseteq U\), \(V\) is the desired open neighbourhood of~\(x\).
\end{proof}

For us, a \emph{groupoid} is a \emph{small} category in which every
arrow is invertible. Thus a groupoid can be denoted as a
quintuple~\((G,\base[G], r, s, \mathrm{inv})\) where \(G\) is the
small category with~\(\base[G]\) as the set of objects;~\(r\)
and~\(s\) are the range and source maps
\(G \rightrightarrows \base[G]\); and \(\mathrm{inv}\colon G\to G\) is
the inversion map. Thus, for an arrow \(\gamma \in G\), \(r(\gamma)\)
and \(s(\gamma)\) and \(\mathrm{inv}(\gamma)\) are, respectively, the
range of~\(\gamma\), the source of~\(\gamma\)
and~\(\gamma\inverse\). However, we shall not write the quintuple all
the time but simply call~\(G\) as the groupoid. Note that
\(\mathrm{inv}\circ \mathrm{inv}\) is the identity mapping on~\(G\),
and \(r = s\circ \mathrm{inv}\).

For a groupoid~\(G\), we always consider~\(\base[G]\) as a subset
of~\(G\) by identifying it with the unit arrows on the objects. This
convention implies that \(s(\gamma) = \gamma\inverse\gamma\) and
\(r(\gamma)= \gamma\gamma\inverse\). Finally, we call the elements
of~\(\base[G]\) the units instead of objects.

For above groupoid~\(G\), an element of the fibre product
\(G\times_{s, \base[G], r} G =\{(\gamma,\eta)\in G\times G : s(\gamma)
= r(\gamma) \}\) is called a \emph{composable pair} of arrows
in~\(G\). Moreover, the multiplication on~\(G\) is the mapping
\[
  G\times_{s, \base[G], r} G \to G, \quad (\gamma,\eta)\mapsto
  \gamma\eta.
\]

The groupoid~\(G\) above is called \emph{topological} if \(G\) also
carries a topology such that the multiplication and inversion are
continuous mappings; for the continuity of the multiplication the
fibre product~\(G\times_{s, \base[G], r} G\) is equipped the subspace
topology of~\(G\times G\). As consequence, \(\mathrm{inv}\) is a
homeomorphism and \(r,s\colon G \rightrightarrows \base[G]\) are
continuous maps when \(\base[G]\) is given the subspace topology.

A topological groupoid~\(G\) is called \emph{locally compact} if its
topology is locally compact and its space of unit is a Hausdorff
subspace.

For \(u\in \base\), \(G^u,G_u,\) and \(G_u^u\) have their standard
meanings, namely,
\[
  G_u= s\inverse(\{u\}), G^u= r\inverse(\{u\}) \text{ and } G_u^u=
  G_u\cap G^u.
\]
Note that \(G_u^u\) is the isotropy group at~\(u\in \base[G]\).

The symbols \(\homeo\) and \(\iso\) will stand for `homeomorphic' and
`isomorphic' respectively.

\begin{example}
  \label{exa:gpd-of-trivial-equi-rel}
  Given a topological space \(X\), the product space \(X\times X\) has
  the following groupoid structure: \((x,y), (z,w) \in X\times X\) are
  composable if and only if \(y=z\) and the composition is given by
  \((x,y)(y,w) = (x,w) \); the inverse map is given by
  \((x,y)\inverse = (y,x)\).  The space of units of this groupoid is
  the diagonal in~\(X\) which we often identify with~\(X\). This
  groupoid is called the \emph{groupoid of the trivial equivalence
    relation}.
\end{example}

\begin{example}
  \label{exa:trans-gpd}
  Let \(G\) be a topological group acting continuously on right of a
  space~\(X\). The transformation groupoid for this action, denoted
  by~\(X\rtimes G\), is defined as follows: the underlying space of
  this groupoid is the cartesian product~\(X\times G\); two elements
  \((x,g)\) and \((y,t)\) are composable if and only if
  \(y= x\cdot g\) and the composition is given by
  \((x,g)(y,t) = (x, gt)\); the inverse map is given by
  \((x,g)^{-1} = (x\cdot g, g^{-1})\). Finally, if \(e\in G\) is the
  unit, then the space of units of this groupoid is \(X\times \{e\}\)
  which we identify with~\(X\).

  For a left \(G\) space \(X\), the transformation groupoid is defined
  similarly and is denoted by~\(G\ltimes X\).
\end{example}

A map of spaces \(f\colon X\to Y\) is called a \emph{local
  homeomorphism} if it is surjective and given \(x\in X\) has a
neighbourhood~\(U\subseteq X\) such that \(f(U)\subseteq Y\) is open
and \(f|_U\colon U\to f(U)\) is a homeomorphism. A local homeomorphism
is an open mapping. A covering map is a local homeomorphism.

\begin{definition}[Locally trivial groupoid~{\cite[page
    143]{Ehresmann1958Categories-Top-et-categories-diff}}]
  \label{def:loc-triv-gpd}
  A topological groupoid~\(G\) is called locally trivial if for each
  unit~\(u\), the restriction of the source map~\(G^u \to \base[G]\)
  is a local homeomorphism.
\end{definition}

\noindent Definition~\ref{def:loc-triv-gpd} can be equivalently stated
as \(r|_{G_u}\colon G_u \to \base[G]\) is a local homeomorphism for
every unit~\(u\).

\begin{definition}
  \label{def:etal-gpd}
  A topological groupoid is called \etale\ if its range (or,
  equivalently, source) map is a local homeomorphism.
\end{definition}

The transformation groupoid in Example~\ref{exa:trans-gpd} is \etale\
\emph{iff}~\(G\) is discrete.  \medskip

\subsection{Actions of groups and groupoids}

Hereon, all groups or groupoids, and their actions are topological. We
find Lee's books~\cite{Lee2012Intro-to-smooth-manifolds-book}
and~\cite{Lee2011Intro-to-top-manifolds-book} very useful when for
group actions and quotients by these actions. We also find his
terminologies more relevant and hence we adopt them.

The goal of this section is to establish
Example~\ref{exa:fund-gp-action} which will be used to discuss the
point-set topology on the fundamental groupoid. This example is
essentially Example~2.4
in~\cite{Muhly-Renault-Williams1987Gpd-equivalence} which the authors
attribute to Reinhart's~Proposition 2.37
in~\cite{Reinhart1983Folliations-book}. In this proposition, Reinhart
shows that the quotient map
\(\tilde{X}\times \tilde{X} \to (\tilde{X}\times \tilde{X})/
\FGp(X)\)---for the diagonal action of~\(\FGp(X)\) on the
product~\(\tilde{X}\times \tilde{X}\) of the universal covering space
of~\(X\)---is a covering map provided that \(X\) is locally path
connected and semilocally simply connected.

A point to notice here is that Reinhart shows that the quotient
space~\((\tilde{X}\times \tilde{X})/ \FGp(X)\) is well-defined using
covering space theory. However,
in~\cite[Example~2.4]{Muhly-Renault-Williams1987Gpd-equivalence}, one
needs to show that the deck transformation action of the fundamental
group on the simply connected covering space is proper. Details of
properness of this action seem to be missing---we give this
justification here.

In this attempt, we observed that our techniques need~\(X\) to be
Hausdorff; we do not know the proof when~\(X\) is not
Hausdorff. Nonetheless, for our purpose, Hausdorffness of~\(X\) is
necessary, as, even for a locally compact (locally Hausdorff
groupoid), we demand the space of units to be Hausdorff.

A \emph{(left) action} of a topological groupoid~\(G\) on a space
\(X\) is a pair \((m, a)\) where \(m\colon X\to \base[G] \) is an open
surjection, and \(a\colon G\times_{s, \base[G], m} X \to X \) is
function with following properties:
\begin{enumerate}
\item for any unit \(u\in \base\) and \(x\in m\inverse(u)\),
  \(a(u,x) = x\);
\item if \((\gamma,\eta) \in G\times G\) is a composable pair, and
  \((\eta, x)\in G\times_{s, \base[G], m} X\), then
  \((\gamma, a(\eta, x))\) is also in the fibre
  product~\(G\times_{s, \base[G], m} X \).  Moreover,
  \(a(\gamma, a(\eta, x)) = a(\gamma \eta, x)\).
\end{enumerate}

\noindent Here \( G\times_{s, \base[G], m} X \) is the fibre
product~\(\{ (\gamma, x)\in G\times X : s(\gamma) = m(x)\}\). We
call~\(m\) the momentum map of the action. A \emph{right action} is
defined similarly. Now on we shall simply write \(\gamma x\) or
\(\gamma\cdot x\) instead of~\(a(\gamma, x)\) for
\((\gamma, x)\in G\times_{s, \base[G], m} X\).  Nearly all the time we
abuse notation and language by ignoring the momentum and action maps,
and simply call \(X\) a left~\(G\)\nb-space.  When we say that \(X\)
is a left (or right) \(G\)\nb-space, we tacitly assume that the
momentum map for the action is denoted by~\(r_X\) (respectively,
\(s_X\)). This convention shall not cause any confusion in this
article. We assume that the momentum map of any action is an open
surjection.

\medskip

A map of spaces \(f\colon X\to Y\) is called \emph{proper} if the
inverse image of a compact set in~\(Y\) under~\(f\) is compact.

\begin{definition}[Free and proper actions]
  \label{def:proper-act}
  Let \(G\) be a topological groupoid acting on the left of a
  space~\(X\).
  \begin{enumerate}
  \item We say this action is \emph{proper} if the map
    \(a\colon G\times_{s, \base[G], r_X} X \to X\times X\) given by
    \((\gamma,x) \mapsto (\gamma x, x)\) is proper.
  \item The action is called \emph{free} if the map~\(a\) is
    one-to-one.
  \end{enumerate}
\end{definition}

\noindent Definition~\ref{def:proper-act}(1) is equivalent to saying
that for given compact sets \(K, L\subseteq X\) the set
\(\{\gamma\in G : (\gamma K) \cap L \neq \emptyset\}\) is compact
in~\(G\). Definition~\ref{def:proper-act}(2) is equivalent to saying
that the stabiliser of the action is trivial at every point of~\(X\).

Assume that \(G\) is a groupoid with open range map, and it acts
properly on a space~\(X\). Then, it is well-known that, the quotient
mapping~\(X\to G\7 X\) is open. Moreover, if~\(X\) is locally compact
(or Hausdorff or second countable), then so is the quotient~\(G\7 X\).

We assume that reader is familiar with continuous and proper actions
of a topological groupoids, see, for
example~\cite{Tu2004NonHausdorff-gpd-proper-actions-and-K}.

\begin{definition}[Covering space action~{\cite[Chapter 21,
    \S~Covering Manifolds]{Lee2012Intro-to-smooth-manifolds-book}}]
  \label{def:cov-space-act}
  Suppose a discrete group~\(G\) acts on a space~\(X\). The action is
  called a \emph{covering space action} if given any point~\(p\in X\)
  has a neighbourhood~\(U\) such that \(\gamma U\cap U = \emptyset\)
  for any \(\gamma \in G\) \emph{unless}~\(\gamma\) is the unit
  of~\(G\).
\end{definition}

It is a classical
result,~\cite[Theorem~12.14]{Lee2011Intro-to-top-manifolds-book} that
the action of a subgroup~\(G\) of homeomorphisms of a nice space~\(X\)
on the space is \emph{covering space action} \emph{iff} the quotient
map~\(X\to X/G\) is a normal covering map with~\(G\) being the group
of automorphisms of this covering space.

\begin{propdef}[Hausdorff criterion for the
  quotient,~{\cite[Proposition
    12.21]{Lee2011Intro-to-top-manifolds-book}}]
  \label{pdef:hausdorff-crit}
  Suppose a group~\(G\) acts on a topological (not necessarily
  Hausdorff) space~\(X\). Then following two statements are
  equivalent.
  \begin{enumerate}
  \item The quotient space~\(X/G\) is Hausdorff.
  \item For \(p,q\in X\) which are not in the same orbits, there are
    neighbourhoods~\(V\) and~\(W\) of~\(p\) and~\(q\), respectively,
    such that \((\gamma V) \cap W =\emptyset\) for
    all~\(\gamma\in G\).
  \end{enumerate}
  We call any of above properties the \emph{Hausdorff criterion for
    quotient spaces}.
\end{propdef}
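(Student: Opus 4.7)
The plan is to use the two classical facts that (a) when a group~\(G\) acts on a space~\(X\) by homeomorphisms, the quotient map \(\pi\colon X\to X/G\) is open, and (b) \(\pi(p)=\pi(q)\) precisely when \(p,q\) lie in a common orbit. With these in hand, each direction reduces to a short saturation/unsaturation argument.

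For the direction \((1)\Rightarrow(2)\), I would start with two points \(p,q\in X\) in distinct orbits, so that \(\pi(p)\neq\pi(q)\). By Hausdorffness of \(X/G\), pick disjoint open neighbourhoods \(\widetilde V\ni\pi(p)\) and \(\widetilde W\ni\pi(q)\), and set \(V=\pi^{-1}(\widetilde V)\) and \(W=\pi^{-1}(\widetilde W)\). These are open saturated neighbourhoods of~\(p\) and~\(q\). Since \(V\) is \(G\)-invariant we have \(\gamma V=V\) for every \(\gamma\in G\), so \((\gamma V)\cap W=V\cap W=\pi^{-1}(\widetilde V\cap\widetilde W)=\emptyset\), which is exactly the separation required in~(2).

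For the converse \((2)\Rightarrow(1)\), I would take two distinct points \([p],[q]\in X/G\) and choose representatives \(p,q\in X\); these lie in different orbits by hypothesis. Apply~(2) to obtain neighbourhoods \(V\ni p\), \(W\ni q\) with \((\gamma V)\cap W=\emptyset\) for every \(\gamma\in G\). Then \(\pi(V)\) and \(\pi(W)\) are open neighbourhoods of \([p]\) and \([q]\) by openness of~\(\pi\). To check disjointness, suppose for contradiction that \([x]\in\pi(V)\cap\pi(W)\); then there exist \(v\in V\) and \(w\in W\) with \([v]=[w]\), so \(w=\gamma v\) for some \(\gamma\in G\), giving \(w\in(\gamma V)\cap W\), a contradiction.

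Neither direction requires anything beyond openness of the quotient map, so there is no real obstacle; the only point worth stating explicitly in the writeup is that~\(G\) acts by homeomorphisms (which is implicit in the setup) so that \(\pi\) is open. I would therefore keep the proof to a few lines and, for readability, state the openness of~\(\pi\) as a preliminary remark before splitting into the two implications.
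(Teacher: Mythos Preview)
Your proof is correct and is the standard saturation argument. However, there is nothing in the paper to compare against: the paper does not supply its own proof of this Proposition and Definition. It is stated with a citation to Lee's book~\cite[Proposition~12.21]{Lee2011Intro-to-top-manifolds-book} and then used as a black box (specifically, in Lemma~\ref{lem:prop-dicnt-act-are-proper} and Example~\ref{exa:fund-gp-action}). So your write-up would in fact add content that the authors chose to omit.

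One small presentational remark: in the direction \((1)\Rightarrow(2)\) you take \(V\) and \(W\) to be the full saturated preimages, which are \(G\)-invariant. This is perfectly fine, but note that the statement of~(2) only asks for \emph{some} neighbourhoods, not necessarily invariant ones; your choice happens to give more than is required. Conversely, in \((2)\Rightarrow(1)\) you correctly use only the non-invariant \(V,W\) supplied by the hypothesis and rely on openness of~\(\pi\) to push them down. Both directions are clean; the preliminary remark that \(\pi\) is open (because \(\pi^{-1}(\pi(U))=\bigcup_{\gamma\in G}\gamma U\)) is worth including, as you suggest.
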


\begin{lemma}
  \label{lem:prop-dicnt-act-are-proper}
  Let~\(G\) be a discrete group acting freely on a topological
  space~\(X\). Assume that the action is a covering space action and
  satisfies the Hausdorff criterion for the quotient. Then the action
  is proper.
\end{lemma}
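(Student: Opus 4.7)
The plan is to use the equivalent formulation of properness recorded right after Definition~\ref{def:proper-act}: the action is proper iff the set $S(K,L) := \{\gamma\in G : \gamma K\cap L\neq\emptyset\}$ is compact in $G$ for every pair of compact subsets $K,L\subseteq X$. Since $G$ is discrete (and hence Hausdorff), compact subsets of $G$ are exactly the finite ones, so the goal reduces to showing that $S(K,L)$ is finite for any compact $K,L\subseteq X$.

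The key local step is to produce, for every $(p,q)\in K\times L$, open neighbourhoods $V_{p,q}\ni p$ and $W_{p,q}\ni q$ together with a finite set $F_{p,q}\subseteq G$ such that $\gamma V_{p,q}\cap W_{p,q}=\emptyset$ whenever $\gamma\notin F_{p,q}$. I would split this into two cases. If $p$ and $q$ lie in distinct $G$-orbits, Proposition and Definition~\ref{pdef:hausdorff-crit} supplies neighbourhoods $V_{p,q},W_{p,q}$ with $\gamma V_{p,q}\cap W_{p,q}=\emptyset$ for \emph{every} $\gamma\in G$, and one takes $F_{p,q}=\emptyset$. If instead $q=\gamma_0 p$ for some (by freeness, unique) $\gamma_0\in G$, I would apply Definition~\ref{def:cov-space-act} at $p$ to get a neighbourhood $U_p$ of $p$ with $\gamma U_p\cap U_p=\emptyset$ for all $\gamma\neq e$, set $V_{p,q}=U_p$ and $W_{p,q}=\gamma_0 U_p$ (open, since each element of $G$ acts as a homeomorphism of $X$), and take $F_{p,q}=\{\gamma_0\}$: if $\gamma U_p\cap \gamma_0 U_p\neq\emptyset$ then $(\gamma_0\inverse\gamma)U_p\cap U_p\neq\emptyset$, forcing $\gamma=\gamma_0$.

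A standard double compactness argument then finishes the proof. Fix $p\in K$; since $\{W_{p,q}\}_{q\in L}$ covers the compact set $L$, extract a finite subcover $W_{p,q_1},\dots,W_{p,q_{n_p}}$, let $V_p:=\bigcap_i V_{p,q_i}$, and observe that any $\gamma$ with $\gamma V_p\cap L\neq\emptyset$ must meet some $W_{p,q_i}$ and hence lies in the finite set $F_p:=\bigcup_i F_{p,q_i}$. The open cover $\{V_p\}_{p\in K}$ of $K$ then admits a finite subcover $V_{p_1},\dots,V_{p_m}$, and any $\gamma\in S(K,L)$ must lie in some $F_{p_j}$, giving $S(K,L)\subseteq\bigcup_j F_{p_j}$, which is finite. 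The main obstacle is the same-orbit case of the local step: the Hausdorff criterion by itself cannot separate a point from its own orbit, and it is precisely the covering space action hypothesis that confines the offending $\gamma$ to the single element $\gamma_0$.
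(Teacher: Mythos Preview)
Your argument is correct. The local dichotomy (different orbits handled by the Hausdorff criterion, same orbit handled by the covering space action) and the double compactness extraction are both fine; the claim that $\gamma V_p\cap L\neq\emptyset$ forces $\gamma\in F_p$ is justified exactly as you indicate, and the reduction to finiteness of $S(K,L)$ via the equivalent description of properness after Definition~\ref{def:proper-act} is legitimate.

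The paper, however, does not give this argument. It simply invokes the converse direction of Lemma~21.11 in Lee's \emph{Introduction to Smooth Manifolds}, remarking that the proof there goes through verbatim once sequences are replaced by nets (Lee works on manifolds, hence first countable). Lee's proof is by contradiction: assuming $S(K,L)$ is infinite, one picks distinct $\gamma_n$ and points $x_n\in K$ with $\gamma_n x_n\in L$, passes to convergent subnets $x_n\to p$ and $\gamma_n x_n\to q$, and then derives a contradiction by the same two-case split you use. So the key local input is identical; the organisation differs. Your direct open-cover argument is self-contained and avoids both the appeal to an external reference and the sequence-to-net translation, at the modest cost of the bookkeeping in the double finite-subcover step.
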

\begin{proof}
  Proof of converse part of Lemma 21.11
  in~\cite{Lee2012Intro-to-smooth-manifolds-book} works here
  \emph{verbatim} after replacing the sequences in the proof by nets.
\end{proof}

Last lemma is essentially a weaker version of Lemma 21.11
in~\cite{Lee2012Intro-to-smooth-manifolds-book}; although Lemma 21.11
in~\cite{Lee2012Intro-to-smooth-manifolds-book} is stated for a Lie
group action on a manifold, it is not hard to see that the lemma holds
for an action of a topological group on a \emph{locally compact
  Hausdorff space}. However, if one removes the locally compactness
hypothesis, then the proof of one implication in the lemma does not
work anymore. But, as noticed in
Lemma~\ref{lem:prop-dicnt-act-are-proper}, the other implication
holds, and this is sufficient for our purpose.

\begin{example}
  \label{exa:fund-gp-action}
  Let \(X\) be path connected, locally path connected and semilocally
  simply connected space which is also \emph{Hausdorff}. Equip the
  fundamental group~\(\FGp(X)\) with the discrete topology (or see
  Corollary~\ref{cor:subspace-top-on:fibre-UC-top}).  Then the action
  of the fundamental group~\(\FGp(X)\) on the simply connected
  covering space~\(\tilde{X}\) by the deck transformations is
  free. Next, the quotient space~\(\tilde{X}/\FGp(X) = X\) is
  Hausdorff. Thus the deck transformation action satisfies the
  hypothesis of Lemma~\ref{lem:prop-dicnt-act-are-proper}. Therefore,
  the action is proper.
\end{example}

An immediate consequence of above examples is following: consider the
diagonal action of~\(\FGp(X)\) on~\(\tilde{X}\times \tilde{X}\), that
is, for \((x,y)\in \tilde{X}\times \tilde{X}\)
and~\(\gamma \in \FGp(X)\), \((x,y)\gamma = (x\gamma,y\gamma)\). Since
the original action is proper, the diagonal action is also
proper. Therefore, the
quotient~\((\tilde{X}\times \tilde{X})/\FGp(X)\) inherits the good
topological properties---such as Hausdorffness, locally compactness
and second countability---from~\(\tilde{X}\).

\subsection{Topology}
Our reference for Algebraic Topology is Hatcher's
book~\cite{Hatcher2002Alg-Top-Book}, except for the actions of
fundamental group as mentioned earlier. By the universal covering
space we mean the universal object in the category of connected
covering spaces, see~Spanier~\cite{Spanier1966Alg-Top-Book}*{page
  80}). As Massey shows in his book~\cite[Corollary 6.2,
Chapter~V]{Massey1991Basic-Course-in-Alg-Top}, a \emph{simply
  connected} covering space is the universal covering space. It is a
standard texbook result that a simply connected covering exists for a
path connected, locally path connected and semilocally simply
connected space.

We denote the unit closed interval \([0,1]\subseteq \R\) by
\(\UInt\). For a topological space~\(X\), \(\PathS{X}\) denotes the
set of paths in \(X\). Differently speaking,
\(\PathS{X}=\Cont(\UInt, X)\), the set of all continuous
functions~\(\UInt\to X\). For \(\gamma \in \PathS{X}\), we
call~\(\gamma(0)\) and~\(\gamma(1)\), respectively, the starting and
ending points of~\(\gamma\). Alternatively, we say that~\(\gamma\)
starts at \(\gamma(0)\) and ends at~\(\gamma(1)\). For \(x_0\in X\),
the constant function~\(x_0\) in~\(\PathS{X}\) is called the constant
path at~\(x_0\), and is denoted by~\(e_{x_0}\).

Continuing further, for \(x\in X\), \(\PathS{X}^x\) and
\(\PathS{X}_x\) denote, respectively, the set of paths in~\(X\) which
end at~\(x\) and the set of paths in~\(X\) which start at~\(x\).

Let \(\gamma,\eta\in\PathS{X}\). If \(\gamma(0)=\eta(1)\) we call the
pair~\((\gamma, \eta)\) composable (concatenable for topologists) and
\(\gamma \oblong \eta\) denotes the concatenated path. In case of
groupoids we adopt the convention that the arrows are from \emph{right
  to left}; the current choice of path concatenation is made to
maintain this consistency in the fundamental groupoid. For
path~\(\gamma\) above, \(\gamma^-\) is its \emph{opposite} path given
by \(\gamma^-(t)=\gamma(1-t)\). A path \(\gamma\in \PathS{X}\) is
called a loop if \(\gamma(0)=\gamma(1)\).

For \(X\) as above, let \(x\in X\). Then \(\FGd(X)\) and \(\FGp(X,x)\)
denote the fundamental groupoid of~\(X\) and fundamental group
of~\(X\) based at \(x\), respectively. Thus \(\FGd(X)\) is the
quotient of~\(\PathS{X}=\Cont(\UInt, X)\) by the \emph{endpoint
  fixing} path homotopy equivalence relation; the relation is denoted
by~\(\sim_p\). This quotient map need not be open in
general~\cite[Example~1, \S~22]{Munkress1975Topology-book}. Similarly,
\(\FGp(X,x)\) is a quotient of \(\PathS{X}_x^x\).  The (endpoint
fixing) homotopy class \(\gamma\in\PathS{X}\), is denoted
by~\([\gamma]\).

If \(X\) is path connected, we ignore the basepoint and simply
write~\(\FGp(X)\). In this case, we also realise the fundamental group
as the group of deck transformations of the universal covering space.

After setting the notation, next we define some basic notions. We do
it explicitly for the sake of transparency.

For \(X\) as earlier, a set \(V\subseteq X\) is called
\emph{relatively inessential} in \(X\), if the homomorphism
\(\FGp(V,x)\to\FGp(X,x)\) induced by the inclusion
\((V,x)\hookrightarrow (X,x)\) is trivial where \(x\in V\). The space
\(X\) is called
\begin{enumerate}[(i), leftmargin=*]
\item \emph{locally path connected} if for a given point \(x\in X\)
  and a neighbourhood \(V\) of it, there is an open path connected
  neighbourhood \(U\subseteq V\) of \(x\);
\item \emph{semilocally simply connected} if each \(x\in X\) has a
  relatively inessential neighbourhood.
\end{enumerate}

A covering map, covering space, an evenly covered neighbourhood and
other standard terms regarding covering spaces and simply connected
cover have their usual meanings as in
Hatcher~\cite{Hatcher2002Alg-Top-Book}.

\subsection{The compact-open topology}
\label{sec:comp-open-topol}

Let \(X\) and \(Y\) be spaces and \(\Cont(Y,X)\) denote the set of
continuous functions from \(Y\) to \(X\). For \(U\subseteq Y\) and
\(V\subseteq X\) define the following subset of \(\Cont(Y, X)\):
\[
  \Basic(U,V)=\{f\in \Cont(Y,X): f(U)\subseteq V\}.
\]

Recall from~\cite{Dugunji-Book}, that the sets of form
\(\Basic(K,U)\), where~\(K\subseteq Y\) is compact
and~\(U\subseteq X\) is open, constitute a subbasis for the
compact-open topology on \(\Cont(X,Y)\). Next we recall some
properties of the compact-open topology on~\(\Cont(X,Y)\) and their
consequences for the path space~\(\PathS{X}\).

\begin{property}\label{prop:open-set-prop}
  The above subbasic open sets of the compact-open topology have
  following properties:
  \begin{equation}\label{eq:CO-rel}
    \left\{\quad
      \begin{aligned}
        \Basic(A,W) &\subseteq \Basic(A,W') && \text{ if } W\subseteq W';\\
        \cap_{i=1}^n\Basic(A_i,W_i) &\subseteq \Basic(\cup_{i=1}^n
        A_i, \cup_{i=1}^n W_i);
      \end{aligned}
    \right.
  \end{equation}
  these relations are easy to check, see~\cite{Dugunji-Book}*{Chapter
    XII, Example 2}.
\end{property}

\begin{property}
  \label{prop:homeo-CO}
  Let \(\iota\colon X \to X'\) be a homeomorphism. Then it induces a
  homeomorphism \(I\colon \Cont(X',Y) \to \Cont(X,Y)\) by
  \(I(f) = f\circ \iota\) for \(f\in \Cont(X',Y)\). It maps the
  subbasic open set \(\Basic(A,W)\subseteq \Cont(X',Y)\) to the
  subbasic open set
  \(\Basic(\iota\inverse(A), W)\subseteq \Cont(X,Y)\).
\end{property}

\begin{property}\label{prop:basis-PX}
  When equipped with the compact-open topology \(\PathS{X}\) has a
  basis consisting of the sets of the form
  \(\cap_{i=1}^{n}\Basic(I_i, U_i)\) where \(n\in \N\),
  \(I_1,\dots,I_n\) are non overlapping closed intervals which cover
  \(\UInt\) and \(U_i\subseteq X\) is open for each \(1\leq i\leq n\)
  (\cite{Hatcher2002Alg-Top-Book}*{the Appendix on the compact-open
    topology}). To summarise, the sets of the form~\(\Basic(I, U)\),
  where \(I\subseteq \UInt\) is a closed interval (or a closed
  interval with rational endpoints) and \(U\subseteq X\) is an open
  set, yield a subbasis of~\(\PathS{X}\).
\end{property}

\begin{property}\label{prop:CO-Hausdorff}
  The function space \(\Cont(Y,X)\) is Hausdorff (or regular) in the
  compact-open topology if and only if \(X\) is Hausdorff (regular,
  respectively)~\cite{Dugunji-Book}*{Chapter XII, 1.3}.  But if \(Y\)
  is normal, \(\Cont(X,Y)\) need not be normal,
  see~\cite{Dugunji-Book}*{Example 5}.
\end{property}

\begin{property}\label{prop:CO-for-metric-sp}
  If \(Y\) is compact and \(X\) is a metric space, then the compact
  open topology on \(\Cont(Y,X)\) is same as the uniform metric
  topology~\cite{Dugunji-Book}*{Chapter XII, 8.2 (3)}.
\end{property}

\begin{property}\label{prop:CO-evaluation-map}
  The evaluation map,
  \[
    \Cont(Y,X)\times Y \to X, \quad (f,y) \mapsto f(y)
  \]
  where \((f,y)\in \Cont(Y,X)\times Y\), is continuous if \(Y\) is
  locally compact~\cite{Dugunji-Book}*{Chapter XII, Theorem 2.4
    (2)}. In particular, the evaluation maps associated with the path
  space \(\PathS{X}\) is continuous.
\end{property}

\begin{property}\label{prop:CO-sec-ctble}
  Assume that \(Y\) is locally compact Hausdorff second
  countable. Then \(\Cont(Y,X)\) is second countable in the
  compact-open topology \emph{iff} \(X\) is second
  countable~\cite[Theorem 3.7]{Edwards1999CO-topology}.  In
  particular, a path space \(\PathS{X}\) is second countable in the
  compact-open topology if and only if \(X\) is second countable.
\end{property}

\begin{property}\label{prop:CO-paracpt}
  If \(Y\) and \(X\) are metric spaces with~\(Y\) separable, then
  \(\Cont(Y,X)\) is paracompact in the compact-open
  topology~\cite{OMeara1971CO-top-paracompact}. As a consequence, a
  path space of a metric space is paracompact.
\end{property}

\smallskip

Finally, a remark is that the local compactness of the compact-open
topology seems a rare or restrictive phenomenon. For instance,
let~\(Y\) is a compact Hausdorff space. Then \(\Cont(Y, \C)\) is a
(complex) normed linear (in fact, a Banach) space, as a consequence of
Property~\ref{prop:CO-for-metric-sp}. Therefore, \(\Cont(X, \C)\) is
locally compact \emph{iff} \(X\) is finite.

\begin{lemma}
  \label{lem:subcov-CO-top}
  \begin{enumerate}[leftmargin=*]
  \item Assume that \(\mathcal{U}\) is a basis for the topology of a
    space \(X\). Let \(\mathcal{R}\) be the collection of subsets of
    \(\PathS{X}\) of the form \(\Basic(I,U)\) where
    \(I\subseteq \UInt\) is a closed interval and
    \(U\in \mathcal{U}\). Then \(\mathcal{R}\) forms a subbasis for
    the compact-open topology on \(\PathS{X}\).
  \item In particular, if the space \(X\) above is locally path
    connected and semilocally simply connected, then the sets of the
    form \(\Basic(I,U)\) with \(I\subseteq \UInt\) is a closed
    interval and \(U\subseteq X\) is a path connected and relatively
    inessential open neighbourhood forms a subbasis for the
    compact-open topology on \(\PathS{X}\).
  \item In particular, if the space \(X\) above is locally path
    connected and semilocally simply connected, then the sets of the
    form \(\Basic(I,U)\) with \(I\subseteq \UInt\) is a closed
    interval with \emph{rational endpoints} and \(U\subseteq X\) is a
    path connected and relatively inessential open neighbourhood forms
    a subbasis for the compact-open topology on \(\PathS{X}\).
  \end{enumerate}
\end{lemma}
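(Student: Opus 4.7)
The plan is to deduce all three parts from Property~\ref{prop:basis-PX}, which already supplies a subbasis of $\PathS{X}$ consisting of all $\Basic(I,U)$ with $I\subseteq\UInt$ a closed interval (even with rational endpoints) and $U\subseteq X$ an \emph{arbitrary} open set. It therefore suffices to refine the second coordinate from an arbitrary open set to a member of the prescribed collection~$\mathcal{U}$.

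For part~(1), I would fix an open $V\subseteq X$ and an element $f\in\Basic(I,V)$. For each $t\in I$, since $\mathcal{U}$ is a basis of $X$, I pick $U_t\in\mathcal{U}$ with $f(t)\in U_t\subseteq V$. Continuity of $f$ and compactness of $I$ then let me extract finitely many closed subintervals $I_1,\dots,I_n$ of $I$, each lying in some $f\inverse(U_{t_k})$, whose relative interiors cover $I$---a standard Lebesgue-number style extraction. Then $f\in\bigcap_{k=1}^n\Basic(I_k,U_{t_k})$, and any $g$ in that intersection satisfies
\[
g(I)=\bigcup_{k=1}^n g(I_k)\subseteq \bigcup_{k=1}^n U_{t_k}\subseteq V,
\]
so $g\in\Basic(I,V)$. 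This writes $\Basic(I,V)$ as a union of finite intersections of sets in $\mathcal{R}$, which is what we need.

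For part~(2), I would first verify that the path-connected, relatively inessential open subsets of $X$ do form a basis for $X$. Given an open set $V\ni x$, semilocal simple connectedness supplies a relatively inessential open neighbourhood $W$ of $x$; the intersection $W\cap V$ remains relatively inessential, because the homomorphism $\FGp(W\cap V,x)\to\FGp(X,x)$ factors through $\FGp(W,x)$, which is trivial. Local path connectedness then gives a path-connected open $U\subseteq W\cap V$ containing $x$, and the same factorisation argument shows that $U$ inherits relative inessentiality from $W\cap V$. Part~(1) applied to this basis delivers~(2). For part~(3), I rerun the proof of~(1) but, when choosing the subintervals~$I_k$, I perturb each endpoint to a nearby rational; this preserves both the containments $I_k\subseteq f\inverse(U_{t_k})$ (as $f\inverse(U_{t_k})$ is open) and the covering property, by density of the rationals in $\UInt$.

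The main thing to watch is the extraction of the subintervals in~(1): one must ensure that their \emph{relative interiors} cover $I$---so that taking set-theoretic unions of images works---while the closed intervals themselves still lie inside $f\inverse(U_{t_k})$. Everything else is bookkeeping, so I do not anticipate a serious obstacle.
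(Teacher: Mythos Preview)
Your proposal is correct and follows essentially the same route as the paper: a Lebesgue-number partition of~$I$ into closed subintervals $[a_j,a_{j+1}]$ each mapped by~$f$ into some $U_{\theta_j}\in\mathcal{U}$, followed by the containment $\bigcap_j\Basic([a_j,a_{j+1}],U_{\theta_j})\subseteq\Basic(I,V)$ via Property~\ref{prop:open-set-prop}. Your treatment of~(2) spells out more carefully than the paper why path-connected relatively inessential opens form a basis (the paper simply asserts it), and your perturbation argument for~(3) is an acceptable alternative to invoking the rational-endpoint version of Property~\ref{prop:basis-PX} directly; one minor remark is that in your final caveat you only need the \emph{closed} subintervals~$I_k$ to cover~$I$ (not their relative interiors) for $g(I)=\bigcup_k g(I_k)$ to hold, and the Lebesgue partition gives this automatically.
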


\noindent The proof of the first part of the lemma uses following easy
observation which we leave for the reader to prove: Suppose that
\(\mathcal{S}_i\) is subbasis for a topology~\(\tau_i\) on a set~\(X\)
where \(i=1,2\). Assume that for given \(U_2\in \mathcal{S}_2\)
and~\(x\in X\), there is a basic \(\tau_1\)\nb-open set~\(B_1\) such
that \(x\in B_1\subseteq U_2\). Then \(\tau_2\subseteq \tau_1\).

\begin{proof}[Proof of Lemma~\ref{lem:subcov-CO-top}]
  
  \noindent (1): Every set in \(\mathcal{R}\) is clearly open in
  \(\PathS{X}\) in the compact-open topology. Let \(\Basic(I,V)\) be a
  subbasic open set in \(\PathS{X}\) where
  \(I\defeq [a,b]\subseteq \UInt\) is an interval, and
  \(V\subseteq X\) open, cf. Property~\ref{prop:basis-PX}. For given
  \(\alpha\in \Basic(I,V)\), we construct finitely many sets
  \(A_1,\dots, A_k\in\mathcal{R}\) with
  \(\alpha\in\cap_{j=1}^k A_j\subseteq \Basic(I,V)\) which is the
  claim of the lemma.

  Since~\(\mathcal{U}\) is a basis, we cover
  \(V= \cup_{\theta\in J} U_\theta \) by open sets~\( U_\theta\)
  in~\(\mathcal{U}\) where~\(J\) is some indexing set. Then
  \(\{\alpha\inverse(U_\theta)\}_{\theta\in J}\) is an open cover of
  \(I=[a,b]\) in \(\UInt\).  Using a Lebesgue number for the open
  cover~\(\{\alpha\inverse(U_\theta)\}\), get a finite sequence
  \(a_0=a<a_1<\dots<a_l=b\) in \([a,b]\) such that for each
  \(0\leq j \leq l\), there an index \(\theta_j\in J\) so that
  \([a_j,a_{j+1}]\) contained in
  \(\alpha\inverse(U_{\theta_j})\). Thus
  \(\alpha\in \cap_{j=0}^l \Basic([a_j,a_{j+1}],U_{\theta_j})\). The
  second relations in Equation~\eqref{eq:CO-rel} implies that
  \[
    \cap_{j=0}^l \Basic([a_j,a_{j+1}],U_{\theta_j})\subseteq
    \Basic(\cup_{j=0}^l [a_j,a_{j+1}],\cup_{j=0}^l
    U_{\theta_j})\subseteq \Basic(I,V).
  \]
  \smallskip
 
  \noindent (2) and (3): These follows directly from~{(i)} as the path
  connected relatively inessential neighbourhoods form a subbasis for
  the topology on \(X\).
\end{proof}

\begin{remark}
  \label{rem:countable-basis-of-CO-2}
  Lemma~\ref{lem:subcov-CO-top}(3) implies that, in this case,
  \(\PathS{X}\) is second countable. Therefore, a space which is
  second countable, locally path connected and semilocally simply
  connected has a countable basis generated by countable subbasic open
  set of form~\(\Basic(I, U_n)\) where \(I\) is a closed subinteraval
  of~\(\UInt\) with rational endpoints and \(U_n\subseteq X\) is one
  of the countable basic open set that is path connected and
  semilocally simply connected.
\end{remark}

\section{Fundamental groupoid as a topological groupoid}
\label{sec:fund-group-as}

\subsection{The two topologies}
\label{sec:two-topologies}

In this section, we describe the CO' and UC topologies on the
fundamental groupoid, and show that they are same for a locally path
connected and semilocally simply connected space. We also mention an
example when the equality fails to hold due to the lack of semilocally
simply connectedness.

\begin{definition}[CO' topology]
  \label{def:CO-top}
  Let \(X\) be a space. The CO' topology on \(\FGd(X)\) is the
  quotient topology induced by the compact-open topology on
  \(\PathS{X}\) via the quotient map \(q\colon \PathS{X}\to \FGd(X)\)
  that sends a path to its path homotopy class.
\end{definition}

Next, for defining the UC topology, we assume that~\(X\) is locally
path connected and semilocally simply connected.  For
\(\gamma\in\PathS{X}\) and neighbourhoods \(\gamma(1)\in U\) and
\(\gamma(0)\in V\) in \(X\), define the following subsets of
\(\PathS{X}\) and \(\FGd(X)\), respectively,
\noindent
\begin{align*}
  \widetilde{N}(\gamma,U,V) &=\{\delta\oblong \gamma \oblong \theta
                              :\delta\in\PathS{U} \text{ with }
                              \delta(0)=\gamma(1), \text{ and } \theta\in\PathS{V} \text{ with } 
                              \gamma(0)=\theta(1)\}\\
  N([\gamma],U,V) &=\{[\delta\oblong \gamma \oblong \theta]
                    :\delta\in\PathS{U} \text{ with }
                    \delta(0)=\gamma(1), \text{ and } \theta\in\PathS{V} \text{ with } 
                    \gamma(0)=\theta(1)\}.
\end{align*}
                
Thus, in short \(N([\gamma],U,V) =q(\tilde{N}(\gamma,U,V))\). As a
consequence of Lemma~2 in~\cite{Brown-Danesh-1975-Top-FG-1}, the sets
of form \(N([\gamma], U,V)\), where \(U\) and~\(V\) are path connected
and semilocally simply connected, form a basis for the a topology
on~\(\FGd(X)\).

\begin{definition}[UC topology]
  \label{def:UC-top}
  Let \(X\) be a path connected, locally path connected and
  semilocally simply connected space. The topology on~\(\FGd(X)\)
  generated by the basic sets \(N([\gamma],U,V)\), where
  \([\gamma]\in\FGd(X)\) and \(U,V\subseteq X\) are path connected
  relatively inessential neighbourhoods of \(\gamma(1)\) and
  \(\gamma(0)\) respectively, is called the UC topology.
\end{definition}

Our next goal is Proposition~\ref{prop:CO-UC-tops-same} which we state
after the following two lemmas. We don't claim any originality for the
proof of this proposition as it is basically the proof of Lemma 2.1
in~\cite{Fischer-Zastrow2007Gen-Uni-Coverings-and-Shape-Gp} written
for the \emph{both ends} of (the homotopy class of) a path. The
proposition uses Lemmas~\ref{lem:CO-coarser}
and~\ref{lem:support-for-UC-equlas-CO}.

\begin{lemma}\label{lem:CO-coarser}
  For a topological space \(X\), the CO' topology is coarser than the
  UC topology. That is \(\textnormal{CO'}\subseteq \textnormal{UC}\).
\end{lemma}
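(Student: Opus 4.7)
The plan is to show that an arbitrary CO'-open set is UC-open by producing a UC-basic neighborhood around each of its points. So let $W\subseteq \FGd(X)$ be CO'-open and fix $[\gamma_0]\in W$. By definition of the quotient topology, $q\inverse(W)$ is CO-open in $\PathS{X}$ and contains $\gamma_0$. Appealing to Property~\ref{prop:basis-PX} together with Lemma~\ref{lem:subcov-CO-top}(2), I would extract a basic neighborhood
\[
B \;=\; \bigcap_{i=1}^{n}\Basic(I_i,U_i) \;\subseteq\; q\inverse(W),
\]
where $I_i=[t_{i-1},t_i]$ with $0=t_0<t_1<\dots<t_n=1$ are non-overlapping, each $U_i\subseteq X$ is a path-connected relatively inessential open set, and $\gamma_0(I_i)\subseteq U_i$. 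Then choose path-connected relatively inessential neighborhoods $V\subseteq U_1$ of $\gamma_0(0)$ and $U\subseteq U_n$ of $\gamma_0(1)$. The goal will be to show that the UC-basic neighborhood $N([\gamma_0],U,V)$ sits inside $q(B)\subseteq W$, which finishes the proof.

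To establish this containment, I would take an arbitrary element $[\delta\oblong\gamma_0\oblong\theta]\in N([\gamma_0],U,V)$, with $\theta\in\PathS{V}$, $\delta\in\PathS{U}$, $\theta(1)=\gamma_0(0)$, $\delta(0)=\gamma_0(1)$, and produce an explicit representative lying in $B$. Concretely, I would define $\gamma'\colon \UInt\to X$ piecewise as a reparametrization of the concatenation:
\[
\gamma'(t)=\begin{cases}
\theta(2t/t_1) & t\in[0,t_1/2],\\
\gamma_0(2t-t_1) & t\in[t_1/2,t_1],\\
\gamma_0(t) & t\in[t_1,t_{n-1}],\\
\gamma_0(2t-t_{n-1}) & t\in[t_{n-1},(1+t_{n-1})/2],\\
\delta\!\bigl(\tfrac{2t-(1+t_{n-1})}{1-t_{n-1}}\bigr) & t\in[(1+t_{n-1})/2,1].
\end{cases}
\]
Continuity at the four interior junctions follows from the matching endpoint conditions; being a reparametrization of the concatenation, $\gamma'\sim_p \delta\oblong\gamma_0\oblong\theta$, so $q(\gamma')=[\delta\oblong\gamma_0\oblong\theta]$.

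Finally, I would verify $\gamma'\in B$ by checking each constraint $\gamma'(I_i)\subseteq U_i$: on $I_1$ we have $\gamma'(I_1)\subseteq V\cup \gamma_0(I_1)\subseteq U_1$; for $1<i<n$, $\gamma'(I_i)=\gamma_0(I_i)\subseteq U_i$; on $I_n$, $\gamma'(I_n)\subseteq \gamma_0(I_n)\cup U\subseteq U_n$. The degenerate case $n=1$ (where the entire $\gamma_0$ lies in a single $U_1$) is handled separately by choosing $U,V\subseteq U_1$ and simply letting $\gamma'$ be a reparametrization of $\delta\oblong\gamma_0\oblong\theta$, which automatically lies in $U_1$. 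The main obstacle is purely bookkeeping: arranging the partition so that the interpolating $\theta$ and $\delta$ pieces fit into the \emph{first} and \emph{last} subintervals $I_1$ and $I_n$ without disturbing the middle intervals; the choices $V\subseteq U_1$ and $U\subseteq U_n$ are precisely what make this splicing succeed, and there is no conceptual difficulty beyond that.
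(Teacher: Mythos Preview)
Your proof is correct and follows essentially the same strategy as the paper: both arguments take a basic compact-open neighbourhood of a representative~$\gamma_0$ and show that the UC-basic set obtained by restricting to the open sets containing the endpoints is mapped into~$W$, by exhibiting an explicit reparametrised representative of~$\delta\oblong\gamma_0\oblong\theta$ that lies in the chosen CO-neighbourhood. The only cosmetic difference is that the paper does not first arrange the~$I_i$'s into a partition via Property~\ref{prop:basis-PX}, so it must treat four cases according to whether~$0$ and/or~$1$ lie in a given~$I_k$; your partition normalisation and explicit piecewise formula for~$\gamma'$ make the reparametrisation (which the paper leaves implicit) completely transparent.
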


\begin{proof}
  Assume that \( \PathS{X}\) has compact open topology and
  \(q\colon \PathS{X} \to \FGd(X)\) is the quotient map. Let a
  CO'-open set \(M \subseteq \FGd(X)\) be given. That is,
  \(q^{-1}(M)\) is open in \(\PathS{X}\). Let \([\alpha] \in M\). Then
  we show that there is a UC-neighbourhood~\(N([\alpha], V^1, V^0)\)
  of~\([\alpha]\) that is contained in~\(M\) and open in the UC
  topology.

  Firstly note that since \(q^{-1}(M)\subseteq \PathS{X}\) is open in
  the compact-open topology, \(\alpha\) has a basic (compact-open)
  neighbourhood~\( \bigcap_{i=1}^{n} \Basic(I_i,U_i)\subseteq
  q^{-1}(M)\) here \(I_1,I_2,\cdots, I_n\) are closed intervals
  in~\([0,1]\) and \(U_1,U_2,\cdots,U_n\) are open subsets of \(X\)
  (cf. Lemma~\ref{lem:subcov-CO-top}(2)).

  Now we start constructing the required UC-neighbourhood: for
  \(i=0,1\), let \(S_i\) be the set of indices~\(k\in \{1,\dots, n\}\)
  for which \(i\in I_k\).  Set \(V^i = \cap_{k\in S_i} U_k\) for
  \(i= 0, 1\). Then \(\alpha(0) \in V^0\) and \(\alpha(1) \in V^1\).
  The set \(N([\alpha], V^1,V^0) \subseteq M\) is the desired
  neighbourhood of~\([\alpha]\) in the UC topology.

  Thus we need to show that this neighbourhood is contained
  in~\(M\). For that assume \([\beta] \in N([\alpha], V^1,V^0)\) is
  given. That is, \(\beta = \theta \oblong \alpha \oblong \gamma \)
  for some paths \(\theta \in \PathS{V^1}\) and
  \(\gamma \in \PathS{V^0}\) appropriately concatinable
  with~\(\alpha\). Choose \(\epsilon \in (0,1)\) such that
  \(\alpha([0,\epsilon]) \subseteq V^0\) and
  \(I_i \cap [0,\epsilon] =\emptyset\) whenever \(0 \notin
  I_i\). Similarly, let \(\delta \in (0,1)\) such that
  \(\alpha([\delta,1]) \subseteq V^1\) and
  \(I_i \cap [\delta,1] =\emptyset\) whenever \(1 \notin I_i\). Due to
  this choice of \(\epsilon\) and \(\delta\), the path
  \(\beta = \theta \oblong \alpha \oblong \gamma\) looks as follows:
  \begin{enumerate}[(i),leftmargin=*]
  \item \(\beta([0,\epsilon])\subseteq V^0\);
  \item\label{it:look-2} \(\beta\) equals \(\alpha\) on
    \([\epsilon, \delta]\); That is, \(\alpha (t) = \beta(t)\) for
    \(t \in [\epsilon, \delta]\).
  \item \(\beta([\delta, 1]) \subseteq V^1\).
  \end{enumerate}
  
  Now we can show that \(\beta\in \bigcap_{i=1}^{n}
  \Basic(I_i,U_i)\). For this note that for given
  index~\(k\in \{1,\dots,n\}\), there are three choices for~\(I_k\),
  namely,
  \begin{enumerate}[(a)]
  \item\label{it:choice-1} \(0,1\notin I_k\),
  \item\label{it:choice-2} \(0\in I_k\) but \(1\notin I_k\),
  \item\label{it:choice-3} \(0\notin I_k\) but \(1\in I_k\),
  \item \label{it:choice-4} \(0,1\in I_k\).
  \end{enumerate}
  We note that for all these choices, \(\beta(I_k)\subseteq U_k\):
  Choice~\ref{it:choice-1} is equivalent to saying that
  \(k\notin S^i\) for \(i=0,1 \). Therefore,
  \(I_k\subseteq [\epsilon, \delta]\). Thus
  \(\beta(I_k) = \alpha(I_k) \subseteq U_k\).

  For Choice~\ref{it:choice-2}, we have
  \[
    \beta(t) =
    \begin{cases}
      \alpha(t) \in U_k, & \quad \textup{ for } \epsilon \leq t,\\
      \beta(t) \in V^0 \subseteq U_k, & \quad \textup{ for } 0 \leq t
      \leq \epsilon.
    \end{cases}
  \]
  
  \noindent The first case in last equation holds
  because~\(\alpha\in \cap_{i=1}^n \Basic(I_i,U_i)\).  Similar to the
  last case, one can see that for Choice~\ref{it:choice-3},
  \(\beta(I_k) \subseteq U_k\).

  Finally, Choice~\ref{it:choice-4} forces \(I_k=\UInt\) and then
  \[
    \beta(t) =
    \begin{cases}
      \alpha(t) \in U_k,                    & \quad \textup{ when } \epsilon \leq t \leq \delta,                                       \\
      \beta(t) \in V^0 \subseteq U_k,       & \quad \textup{ when } 0 \leq t \leq \epsilon,                                            \\
      \beta(t) \subseteq V^1 \subseteq U_k, & \quad \textup{ when }
      \delta \leq t\leq 1.
    \end{cases}
  \]
  Thus \(\beta \in \bigcap_{i=1}^{n} \Basic(I_i,U_i)\) which proves
  the lemma.
\end{proof}

Can the CO' topology be strictly coarser than UC topology? The answer
yes. See Remark~\ref{rem:CO-strict-coarser} for details.

\begin{proposition}
  \label{prop:CO-UC-tops-same}
  For a locally path connected and semilocally simply connected
  space~\(X\), the CO' and UC topologies on \(\FGd(X)\) are same.
\end{proposition}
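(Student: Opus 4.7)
By Lemma~\ref{lem:CO-coarser}, the CO' topology is coarser than the UC topology; the plan is to prove the reverse inclusion. It suffices to show that for every basic UC-open set $N([\gamma], U, V)$, the preimage $q^{-1}(N([\gamma], U, V))$ is open in $\PathS{X}$ with the compact-open topology. Fix $\alpha \in q^{-1}(N([\gamma], U, V))$, so $\alpha \sim_p \delta \oblong \gamma \oblong \theta$ for some $\delta \in \PathS{U}$ with $\delta(0)=\gamma(1)$ and $\theta \in \PathS{V}$ with $\theta(1)=\gamma(0)$. The goal is to produce a basic compact-open neighbourhood $\mathcal{N}$ of $\alpha$ contained in $q^{-1}(N([\gamma], U, V))$.

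First, I would use local path connectedness and semilocal simple connectedness together with a Lebesgue-number argument applied to the compact image $\alpha(\UInt)$ to choose a partition $0 = t_0 < t_1 < \cdots < t_n = 1$ and path-connected, relatively inessential open neighbourhoods $W_1, \ldots, W_n$ with $\alpha([t_{i-1}, t_i]) \subseteq W_i$, arranged so that $W_1 = V$ and $W_n = U$. To handle the interior nodes, I would refine the partition near each internal $t_i$ by inserting a short extra subinterval mapped by $\alpha$ into a smaller path-connected, relatively inessential neighbourhood $W_i^{\ast} \subseteq W_i \cap W_{i+1}$ of $\alpha(t_i)$ (which exists by local path connectedness applied inside the open set $W_i \cap W_{i+1}$). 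The candidate neighbourhood $\mathcal{N}$ is the intersection of all the corresponding subbasic sets $\Basic([t_{i-1}, t_i], W_i)$ and $\Basic([s_i^-, s_i^+], W_i^{\ast})$, which is a basic compact-open neighbourhood of $\alpha$ by Lemma~\ref{lem:subcov-CO-top}.

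The heart of the argument is the following \emph{key fact}: if $W \subseteq X$ is path-connected and relatively inessential, then any two paths in $W$ with the same endpoints are path-homotopic in $X$ (apply relative inessentialness to the loop built by concatenating one with the reverse of the other). For $\alpha' \in \mathcal{N}$, choose a path $\mu_0 \in \PathS{V}$ from $\alpha(0)$ to $\alpha'(0)$, a path $\mu_n \in \PathS{U}$ from $\alpha(1)$ to $\alpha'(1)$, and for each $1 \leq i \leq n-1$ a path $\mu_i$ in $W_i \cap W_{i+1}$ from $\alpha(t_i)$ to $\alpha'(t_i)$, which exists because the refinement forces both points into the path-connected set $W_i^{\ast}$. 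For each $1 \leq i \leq n$, both $\alpha'|_{[t_{i-1}, t_i]}$ and $\mu_i \oblong \alpha|_{[t_{i-1}, t_i]} \oblong \mu_{i-1}^{-}$ are paths in $W_i$ with identical endpoints $\alpha'(t_{i-1})$ and $\alpha'(t_i)$, hence path-homotopic in $X$ by the key fact. Concatenating the resulting segmentwise homotopies, the interior cancelling pairs $\mu_i^{-} \oblong \mu_i$ telescope away, giving
\[
  \alpha' \sim_p \mu_n \oblong \alpha \oblong \mu_0^{-} \sim_p (\mu_n \oblong \delta) \oblong \gamma \oblong (\theta \oblong \mu_0^{-}),
\]
where $\mu_n \oblong \delta \in \PathS{U}$ starts at $\gamma(1)$ and $\theta \oblong \mu_0^{-} \in \PathS{V}$ ends at $\gamma(0)$. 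Hence $[\alpha'] \in N([\gamma], U, V)$, as required.

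The main technical obstacle I foresee is the combinatorial bookkeeping that guarantees each connecting path $\mu_i$ lies simultaneously in $W_i$ and $W_{i+1}$; the refinement trick above---inserting an extra subinterval at each interior node and trapping it inside a single small $W_i^{\ast} \subseteq W_i \cap W_{i+1}$---is the crucial device. This is the two-sided analogue of the argument in~\cite[Lemma~2.1]{Fischer-Zastrow2007Gen-Uni-Coverings-and-Shape-Gp}, adapted from the universal cover (where only the endpoint moves) to the fundamental groupoid (where both endpoints of a path vary).
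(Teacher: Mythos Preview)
Your proposal is correct and follows essentially the same strategy as the paper: after invoking Lemma~\ref{lem:CO-coarser}, both arguments cover the given path by path-connected relatively inessential open sets, build a basic compact-open neighbourhood from the resulting partition, and then telescope segmentwise homotopies (your ``key fact'' is precisely what drives the paper's Lemma~\ref{lem:support-for-UC-equlas-CO}). The only cosmetic difference is how the overlap at interior nodes is enforced---you insert short subintervals mapped into small $W_i^{\ast}\subseteq W_i\cap W_{i+1}$, whereas the paper uses singleton compacta $\{t_i\}$ mapped into the path component $V_i$ of $W_i\cap W_{i+1}$---but these devices serve an identical purpose.
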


\begin{figure}[hbt]
  \centering \scalebox{0.8}{
\begingroup%
  \makeatletter%
  \providecommand\color[2][]{%
    \errmessage{(Inkscape) Color is used for the text in Inkscape, but the package 'color.sty' is not loaded}%
    \renewcommand\color[2][]{}%
  }%
  \providecommand\transparent[1]{%
    \errmessage{(Inkscape) Transparency is used (non-zero) for the text in Inkscape, but the package 'transparent.sty' is not loaded}%
    \renewcommand\transparent[1]{}%
  }%
  \providecommand\rotatebox[2]{#2}%
  \newcommand*\fsize{\dimexpr\f@size pt\relax}%
  \newcommand*\lineheight[1]{\fontsize{\fsize}{#1\fsize}\selectfont}%
  \ifx\svgwidth\undefined%
    \setlength{\unitlength}{273.15827087bp}%
    \ifx\svgscale\undefined%
      \relax%
    \else%
      \setlength{\unitlength}{\unitlength * \real{\svgscale}}%
    \fi%
  \else%
    \setlength{\unitlength}{\svgwidth}%
  \fi%
  \global\let\svgwidth\undefined%
  \global\let\svgscale\undefined%
  \makeatother%
  \begin{picture}(1,0.30596649)%
    \lineheight{1}%
    \setlength\tabcolsep{0pt}%
    \put(0,0){\includegraphics[width=\unitlength,page=1]{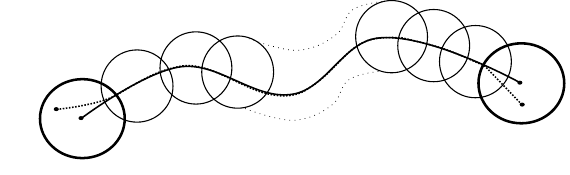}}%
    \put(-0.00277069,0.00645302){\makebox(0,0)[lt]{\lineheight{1.25}\smash{\begin{tabular}[t]{l}$U^0$\end{tabular}}}}%
    \put(0.85115527,0.04705388){\makebox(0,0)[lt]{\lineheight{1.25}\smash{\begin{tabular}[t]{l}$U^1$\end{tabular}}}}%
  \end{picture}%
\endgroup%
}
  \caption{In the above figure, $\alpha$ is denoted by the thick path
    and $\beta $ is the dotted path.}\label{fig:CO-UC-tops-same}
\end{figure}
  
\begin{proof}
  CO' is coarser than UC follows from Lemma~\ref{lem:CO-coarser}.  For
  the converse, let \([\alpha] \in \FGd(X)\) and let
  \(N([\alpha], U^1, U^0)\) be a basic UC\nb-open neighbourhood of it
  wherein \(U^0\) and \(U^1\) are path connected and relatively
  inessential, see Lemma~\ref{lem:subcov-CO-top}(2). We need to show
  that \(q\inverse(N([\alpha], U^1, U^0)) \subseteq \PathS{X}\) is
  open in the compact-open topology. For this let
  \(\beta \in q\inverse(N([\alpha], U^1,U^0))\) be given. We construct
  a compact-open neighbourhood~\(T\) of \(\beta\) that is contained
  in~\(q\inverse(N([\alpha], U^1, U^0)\).

  Firstly, using the continuity of~\(\beta\), choose
  \(\epsilon,\delta \in (0,1)\) such that
  \begin{enumerate}[(i),leftmargin=*]
  \item \(\beta([0,\epsilon]) \subseteq U^0\);
  \item \(\beta(t) = \alpha (t)\) for \(\epsilon \leq t \leq \delta\);
  \item \(\beta([\delta, 1]) \subseteq U^1\).
  \end{enumerate}
  Cover the compact set \(\beta([\epsilon,\delta])\) by path connected
  relatively inessential open sets,
  cf.~Figure~\ref{fig:CO-UC-tops-same}.  Obtain the Lebesgue number
  associated to the open cover of \([\epsilon,\delta]\) that is
  inverse image under~\(\beta\) of the prior open cover.  Using this
  Lebesgue number, we get a partition
  \(\epsilon = t_1 < t_2< \cdots < t_n = \delta\) of
  \([\epsilon, \delta]\) such that \(\alpha([t_i,t_{i+1}])\) lies in
  some path connected open relatively inessential neighbourhood
  \(W_i\) of \(X\) for \(i=1,2,\cdots,n-1\). We call \(W_0= U^0\) and
  \(W_n = U^1\). And finally, let \(V_i\) be the path component of
  \(W_i\cap W_{i+1}\) which contains \(\alpha(t_{i+1})\) for
  \(i=0,1,2,\cdots,n-1\). Now we define out desired set
  \(T\subseteq \PathS{X}\) as follows:
  
  \begin{multline}\label{eq:comp-of-top-basic-nbd}
    T = \Basic([0,\epsilon], W_0)\cap \Basic(\{t_1\}, V_0) \cap
    \Basic([t_1,t_2],W_1) \cap \Basic(\{t_2\},V_1) \cap
    \Basic([t_2,t_3], W_2)\\ \cap \Basic(\{t_3\}, V_2) \cap \cdots
    \cap \Basic([t_{n-1},t_n],W_{n-1})\cap \Basic(\{t_n\}, V_{n-1})
    \cap \Basic([\delta, 1], W_{n}).
  \end{multline}
  By definition, \(T\) is a set that is open in the compact-open
  topology on~\(\PathS{X}\) and
  contains~\(\beta\). Lemma~\ref{lem:support-for-UC-equlas-CO} shows
  that given~\(\xi\in T\), then
  \([\xi] \in N([\alpha], U^1, U^0) \implies T\subseteq
  q\inverse(N([\alpha], U^1, U^0))\) which completes the proof.
\end{proof}

\begin{lemma}
  \label{lem:support-for-UC-equlas-CO}
  Let~\(T\subseteq \PathS{X}\) be as in the proof of
  Proposition~\ref{prop:CO-UC-tops-same} defined in
  Equation~\ref{eq:comp-of-top-basic-nbd}. Assume the same convention,
  hypothesis and notation as in that proof. Then for
  given~\(\xi\in T\), there are paths \(\sigma_0\in \PathS{W_0}\) and
  \(\sigma_n\in \PathS{W_n}\) with
  \([\xi]= [\sigma_n\oblong \alpha \oblong \sigma_0]\). In short,
  \([\xi]\in N([\alpha], U^1, U^0)\).
\end{lemma}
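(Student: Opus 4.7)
The strategy is to decompose $\xi$ along the partition $0 < t_1 < \cdots < t_n < 1$ (where $t_1 = \epsilon$ and $t_n = \delta$) and compare each piece with the corresponding piece of $\alpha$, matching endpoints inside the $V_i$'s and killing the resulting loops via the relatively inessential property of each $W_i$. Writing $\xi_L = \xi|_{[0,\epsilon]}$, $\xi_i = \xi|_{[t_i,t_{i+1}]}$ for $1 \leq i \leq n-1$, and $\xi_R = \xi|_{[\delta,1]}$, membership in $T$ forces $\xi_L \in \PathS{W_0}$, $\xi_i \in \PathS{W_i}$, $\xi_R \in \PathS{W_n}$, and $\xi(t_{i+1}) \in V_i$ for $0 \leq i \leq n-1$. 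Decompose $\alpha = \alpha_R \oblong \alpha_{n-1} \oblong \cdots \oblong \alpha_1 \oblong \alpha_L$ analogously; each middle piece $\alpha_i$ lies in $W_i$ by the choice of partition, and a harmless tightening of $\epsilon,\delta$ using continuity of $\alpha$ at $0$ and $1$ (together with $\alpha(0) \in W_0$ and $\alpha(1) \in W_n$) additionally arranges $\alpha_L \in \PathS{W_0}$ and $\alpha_R \in \PathS{W_n}$.

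For each $0 \leq i \leq n-1$, since $V_i$ is path connected and contains both $\xi(t_{i+1})$ and $\alpha(t_{i+1})$, I would pick a path $\tau_i \in \PathS{V_i}$ from $\xi(t_{i+1})$ to $\alpha(t_{i+1})$. The key step is the identity $\xi_i \sim_p \tau_i^- \oblong \alpha_i \oblong \tau_{i-1}$ for each middle index $1 \leq i \leq n-1$, obtained by noting that the loop $\tau_{i-1}^- \oblong \alpha_i^- \oblong \tau_i \oblong \xi_i$ based at $\xi(t_i)$ is entirely contained in $W_i$ (since $V_{i-1}, V_i \subseteq W_i$) and hence is null-homotopic in $X$ because $W_i$ is relatively inessential. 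Concatenating these homotopies across $i$ and telescoping via $\tau_j \oblong \tau_j^- \sim_p e$ to cancel the internal $\tau$'s yields $\xi_{n-1} \oblong \cdots \oblong \xi_1 \sim_p \tau_{n-1}^- \oblong \alpha_M \oblong \tau_0$, where $\alpha_M := \alpha_{n-1} \oblong \cdots \oblong \alpha_1$ is the middle part of $\alpha$.

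To finish, I would set $\sigma_0 := \alpha_L^- \oblong \tau_0 \oblong \xi_L$ and $\sigma_n := \xi_R \oblong \tau_{n-1}^- \oblong \alpha_R^-$; these lie in $\PathS{W_0}$ and $\PathS{W_n}$ respectively because every factor lies in $W_0$ or $W_n$ by construction. Using $\alpha_R^- \oblong \alpha_R \sim_p e$, $\alpha_L \oblong \alpha_L^- \sim_p e$, and $\alpha \sim_p \alpha_R \oblong \alpha_M \oblong \alpha_L$, a direct calculation collapses $\sigma_n \oblong \alpha \oblong \sigma_0$ to $\xi_R \oblong \tau_{n-1}^- \oblong \alpha_M \oblong \tau_0 \oblong \xi_L$, which is path homotopic to $\xi$ by the previous paragraph. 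The main technical obstacle is bookkeeping the many endpoints across these concatenations and verifying that each $\tau_i$ genuinely lands in $W_i \cap W_{i+1}$; once the picture (cf.~Figure~\ref{fig:CO-UC-tops-same}) is in hand, the relatively inessential property of each $W_i$ does the substantive work of killing the loops.
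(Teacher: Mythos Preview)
Your proof is correct and follows essentially the same strategy as the paper: connect $\xi(t_i)$ to $\alpha(t_i)$ by paths in the $V_i$'s, use the relatively inessential property of each $W_i$ to identify the corresponding pieces of $\xi$ and $\alpha$, and telescope. The only cosmetic difference is at the two ends: the paper picks $\sigma_0,\sigma_n$ abstractly via path connectedness of $W_0,W_n$ and then invokes the relatively inessential property of $W_0,W_n$ for the extreme pieces, whereas you build $\sigma_0,\sigma_n$ explicitly as concatenations involving $\alpha_L,\alpha_R$; your version trades that last use of relative inessentiality for the small tightening of $\epsilon,\delta$ ensuring $\alpha_L\subseteq W_0$ and $\alpha_R\subseteq W_n$, which the paper needs as well but leaves implicit.
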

\begin{proof}
  Using the path connectedness of \(W_0\) and \(W_n\) choose paths
  \(\sigma_0\) from \(\xi(0)\) to \(\alpha(0)\) and \(\sigma_n\) from
  \(\alpha(1)\) to \(\xi(1)\). Since \(\alpha(t_i)\) and \(\xi(t_i)\)
  are in same path component \(V_i\), choose path \(\eta_i\) from
  \(\alpha(t_i)\) to \(\xi(t_i)\) for \(i=1,2,\cdots n\). Now we have:
  \begin{enumerate}[(a)]
  \item \label{item-eqiv-1}
    \( [\eta_1 \oblong \alpha|_{[0,t_1]}\oblong \sigma_0] =
    [\xi|_{[0,t_1]}]\), because \(W_0\) is relatively inessential;
  \item \label{item-eqiv-2}
    \( [\eta_{i+1} \oblong \alpha|_{[t_{i+1},t_i]}\oblong \eta^{-}_i]
    = [\xi|_{[t_{i+1},t_i]}]\), because \(W_i\) is relatively
    inessential for \(1\leq i \leq n-1\);
  \item \label{item-eqiv-3}
    \( [\sigma_n \oblong \alpha|_{[1,t_n]}\oblong \eta^{-}_n] =
    [\xi|_{[t_n,1]}]\), because \(W_n\) is relatively inessential.
  \end{enumerate}
  Using~\ref{item-eqiv-1}, \ref{item-eqiv-2} and \ref{item-eqiv-3} we
  have
  \begin{multline*}
    [\xi] = [\sigma_n \oblong \alpha|_{[1,t_n]}\oblong \eta^{-}_n]
    [\eta_{n} \oblong \alpha|_{[t_{n},t_{n-1}]}\oblong \eta^{-}_{n-1}]
    \cdots [\eta_2 \oblong \alpha|_{[t_2,t_1]}\oblong \eta^{-}_1]
    [\eta_1 \oblong \alpha|_{[0,t_1]}\oblong \sigma_0] \\
    = [\sigma_n \oblong \alpha|_{[1,t_n]} \oblong
    \alpha|_{[t_n,t_{n-1}]} \oblong \cdots \oblong \alpha|_{[0,t_1]}
    \oblong \sigma_0 ] = [\sigma_n \oblong \alpha \oblong \sigma_0];
  \end{multline*}
  which was the claim of the Lemma.
\end{proof}

\begin{corollary}[Corollary of Proposition~\ref{prop:CO-UC-tops-same}]
  \label{cor:quotient-from-PX-to-PiX-open}
  Let \(X\) be locally path connected semilocally simply connected
  space; equip \(\PathS{X}\) with the compact-open topology and
  \(\FGd(X)\) with the CO' topology. Then the quotient map
  \(q\colon \PathS{X}\to \FGd(X)\) is open.
\end{corollary}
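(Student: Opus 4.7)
The plan is to derive the openness of $q$ essentially for free from the machinery established in the proof of Proposition~\ref{prop:CO-UC-tops-same}. Since that proposition identifies the CO' and UC topologies on $\FGd(X)$, it suffices to show that for every compact-open open set $W\subseteq \PathS{X}$, the image $q(W)$ is UC-open, i.e.\ that each $[\alpha]\in q(W)$ sits inside a basic neighbourhood $N([\alpha],U^1,U^0)\subseteq q(W)$ of the form given in Definition~\ref{def:UC-top}.

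The key step is to re-read the argument from Lemma~\ref{lem:CO-coarser} in the reverse direction. Fix $[\alpha]\in q(W)$ and a representative $\alpha\in W$. By Lemma~\ref{lem:subcov-CO-top}(2), $\alpha$ has a basic compact-open neighbourhood $\bigcap_{i=1}^n\Basic(I_i,U_i)\subseteq W$ in which every $U_i$ may be taken path connected and relatively inessential. Following the prescription from the proof of Lemma~\ref{lem:CO-coarser}, set $V^{0}=\bigcap_{k\colon 0\in I_k}U_k$ and $V^{1}=\bigcap_{k\colon 1\in I_k}U_k$; these are open neighbourhoods of $\alpha(0)$ and $\alpha(1)$, respectively, and the intersection of finitely many relatively inessential sets is still relatively inessential. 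Replacing $V^{0}$ and $V^{1}$ by the path components of $\alpha(0)$ and $\alpha(1)$ in them (which are open by local path connectedness and remain relatively inessential) we obtain admissible UC-basis ingredients.

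The content of Lemma~\ref{lem:CO-coarser}'s proof is precisely that any concatenation $\beta=\theta\oblong\alpha\oblong\gamma$ with $\theta\in\PathS{V^{1}}$ and $\gamma\in\PathS{V^{0}}$ lies in $\bigcap_{i=1}^n\Basic(I_i,U_i)$; the four cases there analyse the relationship of $I_k$ to $\{0,1\}$ and use only that $\theta$, $\gamma$ stay in $V^{1}$, $V^{0}$ and that $\alpha$ already lies in $\bigcap_i\Basic(I_i,U_i)$. Consequently every class in $N([\alpha],V^{1},V^{0})$ has a representative inside $W$, so $N([\alpha],V^{1},V^{0})\subseteq q(W)$. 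This exhibits a UC-basic neighbourhood of $[\alpha]$ inside $q(W)$.

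No genuine obstacle is expected; the only minor care is to ensure $V^{0}$ and $V^{1}$ can be legitimately chosen path connected and relatively inessential so that $N([\alpha],V^{1},V^{0})$ is a member of the basis in Definition~\ref{def:UC-top}, which is arranged by the refinement in the previous paragraph using local path connectedness and semilocal simple connectedness. Once this is observed, the openness of $q$ is just a matter of quoting Proposition~\ref{prop:CO-UC-tops-same} to translate UC-openness of $q(W)$ into CO'-openness.
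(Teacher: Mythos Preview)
Your argument is correct and follows essentially the same route as the paper: both use Proposition~\ref{prop:CO-UC-tops-same} to reduce to showing that the image of a basic compact-open set is UC-open, and then recycle the construction from the proof of Lemma~\ref{lem:CO-coarser} to exhibit a set of the form \(N([\alpha],V^{1},V^{0})\) inside that image. Your version is in fact slightly more careful than the paper's, since you take \(V^{0}\) and \(V^{1}\) to be the full intersections \(\bigcap_{k:0\in I_k}U_k\) and \(\bigcap_{k:1\in I_k}U_k\) (as in Lemma~\ref{lem:CO-coarser}) rather than singling out one index for each endpoint.
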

\begin{proof}
  We use the fact that the CO' and UC topologies on~\(\FGd(X)\) are
  same to prove this. Let
  \(U=\cap_{j=1}^n\Basic(I_j,V_j)\subseteq \PathS{X}\) be a basic open
  set in the compact-open topology where \(I_j\) are closed intervals,
  and \(V_j\subseteq X\) are path connected relatively inessential
  neighbourhoods. Let \([\alpha]\in q(U)\) be given. We construct a
  UC-neighbourhood of~\([\alpha]\) contained in~\(q(U)\).  For that,
  we may assume that \(\alpha'(0)\in V_1\) and
  \(\alpha'(1)\in V_n\)---relabel the indices if required. Then, note
  that~~\(N([\alpha],V_1,V_n)\) is an open neighbourhood
  of~\([\alpha]\) contained in~\(q(U)\).
\end{proof}

\subsection{Topological fundamental groupoid}
\label{sec:topol-fund-group-1}

Assume that \(X\) is a locally path connected and semilocally simply
connected space. Then the UC and CO' topologies on~\(\FGd(X)\) are
same; in this section, we prove that~\(\FGd(X)\) is a topological
fundamental groupoid when equipped with any of these topologies. At
this point, one should recall that
Fabel~\cite{Fabel2011HE-Counterexample} points out that if~\(X\) is
not semilocally simply connected, then the fundamental group equipped
using the CO' topology may fail to be topological; in particular, the
multiplication may fail to be continuous. As a consequence, the
fundamental groupoid fails to be topological.

For the fundamental groupoid~\(\FGd(X)\), the fibre product
\(\FGd(X)\times_{s, \base[\FGd(X)], r} \FGd(X)\), which consists of
composable arrows, shall be denoted by~\({\FGd(X)}^{(2)}\).

\begin{theorem}\label{thm:fund-gpd-top}
  Let \(X\) be a locally path connected and semilocally simply
  connected space. Then the fundamental groupoid \(\FGd(X)\) is
  topological groupoid when equipped with UC or, equivalently, CO'
  topology.
\end{theorem}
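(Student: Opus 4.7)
The plan is to verify continuity of the three structure maps of $\FGd(X)$: the source and range maps to $X$, the inversion $\mathrm{inv}\colon \FGd(X)\to \FGd(X)$, and the multiplication $m\colon {\FGd(X)}^{(2)}\to \FGd(X)$. By Proposition~\ref{prop:CO-UC-tops-same} we may work with the UC topology throughout, whose basis consists of the sets $N([\gamma], U, V)$ with $U, V$ path connected and relatively inessential; local path connectedness together with semilocally simply connectedness guarantee that such neighbourhoods form a neighbourhood basis at every point of $X$.

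First I would dispose of the easier maps. For the source map $s\colon [\gamma]\mapsto \gamma(0)$, given an open $W\subseteq X$ and a class $[\gamma]$ with $\gamma(0)\in W$, shrink $W$ to a path connected relatively inessential neighbourhood $V\subseteq W$ of $\gamma(0)$ and pick any such $U$ of $\gamma(1)$; every element of $N([\gamma], U, V)$ has source in $V\subseteq W$, so $s\inverse(W)$ is open, and the range map is symmetric. For inversion, the identity $\mathrm{inv}(N([\gamma], U, V)) = N([\gamma^{-}], V, U)$ follows immediately from $(\delta\oblong\gamma\oblong\theta)^{-} = \theta^{-}\oblong\gamma^{-}\oblong\delta^{-}$, so $\mathrm{inv}$ is in fact a homeomorphism.

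The main work, and the main obstacle, is the continuity of multiplication. Given a composable pair $([\alpha], [\beta])$ with $\alpha(0)=\beta(1)$ and a basic UC neighbourhood $N([\alpha\oblong\beta], U^1, U^0)$ of their product, the strategy is to pick a single path connected relatively inessential open neighbourhood $W$ of the common point $\alpha(0)=\beta(1)$ and consider the open neighbourhood $\bigl(N([\alpha], U^1, W)\times N([\beta], W, U^0)\bigr)\cap {\FGd(X)}^{(2)}$ of $([\alpha], [\beta])$ in the fibre product topology. Any composable pair in this set has representatives $\alpha' = \delta_1 \oblong \alpha \oblong \theta_1$ and $\beta' = \delta_2 \oblong \beta \oblong \theta_2$ with $\delta_1\in\PathS{U^1}$, $\theta_2\in\PathS{U^0}$, and $\theta_1, \delta_2 \in \PathS{W}$; the composability condition $\alpha'(0)=\beta'(1)$ forces $\theta_1(0)=\delta_2(1)$, so that $\theta_1\oblong\delta_2$ is a well-defined loop in $W$ based at $\alpha(0)$.

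The crucial input is then the defining property of a relatively inessential neighbourhood: the inclusion $W\hookrightarrow X$ induces the trivial map on fundamental groups, so $\theta_1\oblong\delta_2$ is null-homotopic in $X$ with endpoints fixed, whence $[\theta_1\oblong\delta_2] = [e_{\alpha(0)}]$ in $\FGd(X)$. Concatenating and regrouping then yields $[\alpha'\oblong\beta'] = [\delta_1\oblong\alpha\oblong\beta\oblong\theta_2] \in N([\alpha\oblong\beta], U^1, U^0)$, which completes the verification. The delicate points will be the bookkeeping that identifies the middle piece $\theta_1\oblong\delta_2$ as a loop at $\alpha(0)$ from the composability alone, and the observation that this loop contracts in $X$ rather than just in a larger ambient set---this is exactly where the semilocally simply connected hypothesis is indispensable, consistent with Fabel's counterexample for the Hawaiian earring cited before the statement.
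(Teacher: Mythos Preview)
Your proposal is correct and follows essentially the same route as the paper: for multiplication you choose a relatively inessential neighbourhood of the common endpoint and kill the middle loop, exactly as the paper does (with only cosmetic differences in labelling), and for inversion you use the identity $\mathrm{inv}(N([\gamma],U,V))=N([\gamma^{-}],V,U)$, which the paper records as an alternative immediately after its main proof (the paper's primary argument for inversion instead passes through the CO' topology via the self-homeomorphism of $\PathS{X}$ induced by $t\mapsto 1-t$). Your explicit check of the source and range maps is harmless but not needed, since their continuity follows from that of multiplication and inversion, as noted in the preliminaries.
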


\begin{proof}[Proof of Theorem~\ref{thm:fund-gpd-top}]
  \emph{Continuity of multiplication:} We use the definition of
  continuity as in Theorem~18.1(4) in
  Munkres~\cite{Munkress1975Topology-book}. Let
  \(([\alpha],[\beta])\in \FGd(X)^{(2)}\). Consider an open set \(W\)
  of \(\FGd(X)\) containing \([\alpha \oblong \beta]\). Take a a basic
  UC\nb-open
  neighbourhood~\(N([\alpha \oblong \beta], U, V)\subseteq W\) of
  \([\alpha\oblong \beta]\). Choose a relatively inessential open
  neighbourhood~\(Z\) of \(\alpha(0)= \beta(1)\). Consider the open
  set~\(O\) in \(\FGd(X)^{(2)}\) given by
  \[
    O= \big(N([\alpha], U, Z) \times N([\beta], Z, V) \big) \cap
    \FGd(X)^{(2)}.
  \]
  Then \(O\) is a neighbourhood of~\(([\alpha], [\beta])\) in
  \(\FGd(X)^{(2)}\).

  Our claim is that for any \(([\xi], [\eta])\in O\), the product
  \([\xi\oblong \eta] \in N([\alpha\oblong \beta, U, V])\). Validity
  of this claim proves the continuity of multiplication.
    
  Proof of the claim: since \(([\xi], [\eta])\in O\), let
  \([\xi] = [\theta_1 \oblong \alpha \oblong \delta_1]\) and
  \([\eta] = [\theta_2 \oblong \beta \oblong \delta_2]\) where
  \(\theta_1 \in \PathS{U}\); \(\delta_1, \theta_2 \in \PathS{Z}\);
  \(\delta_2 \in \PathS{V}\); and \(\theta_2(1) = \delta_1 (0)\). Now
  we have
    
  \begin{multline*}
    [\xi \oblong \eta] =  [\theta_1 \oblong \alpha \oblong \delta_1 \oblong \theta_2 \oblong\ \beta \oblong \delta_2] =  [\theta_1 \oblong \alpha] [ \delta_1 \oblong \theta_2 ] [ \beta \oblong \delta_2] =  [\theta_1 \oblong \alpha] [ \beta \oblong \delta_2]\\
    = [\theta_1 \oblong \alpha \oblong \beta \oblong \delta_2] \in
    N([\alpha\oblong \beta], U, V ).
  \end{multline*}

  \noindent The third equality of the first line above follows
  because~\(Z\) is relatively inessential.  \smallskip

  \emph{The continuity of the inverse map:} Let \(\textrm{inv}\) be
  the inverse map of~\(\FGd(X)\) that send an element to its
  inverse. Let \(\iota\colon \UInt \to \UInt\) be the affine
  homeomorphism \(t\mapsto 1-t\) for \(t\in \UInt\).  Due to
  Property~\ref{prop:homeo-CO}, this affine homeomorphism, induces a
  self-homeomorphism \(\hat{\iota} \colon \PathS{X}\to \PathS{X}\) of
  the function space equipped with the compact-open topology; the map
  is given by \(\hat{\iota}(\alpha) = \alpha\circ \iota\) for
  \(\alpha\in \PathS{X}\). This self-homeomorphism also preserves path
  homotopies, and hence, by the universal property of the quotient
  space, it induces a homeomorphism
  \([\hat{\iota}]\colon \FGd(X) \to \FGd(X)\). Now we note that
  \(\mathrm{inv} = [\hat{ \iota}]\).
\end{proof}

The following observation implies an alternate proof for the
continuity of the inversion map. The observation is that
\[
  \textrm{inv} \biggr(N([\alpha^{-1}], V,U) \biggr) = N([\alpha], U,
  V);
\]
for any basic UC-open neighbourhood~\(N([\alpha], U, V)\). Due to this
observation, the inversion in~\(\FGd(X)\) is a continuous map.

\medskip

Some standard observations are that the space~\(X\), the simply
connected covering space~\(\tilde{X}\) of it, and the fundamental
group~\(\FGp(X,x)\) for \(x\in X\) are subsets of~\(\FGd(X)\). In the
following discussion, we show that these are \emph{in fact}
subspaces. For the sake of completeness of discussion, here are some
observations:
\begin{enumerate}[(i), leftmargin=*]
\item\label{it:base-homeo} The space of units~\(\FGd(X)^{(0)}\)
  of~\(\FGd(X)\) consists of constant paths. Therefore, it is in
  one-to-one correspondence with~\(X\). Let
  \(\Upsilon \colon X \to \FGd(X)^{(0)}\) be this correspondence that
  sends a point in~\(X\) to the homotopy class of the corresponding
  constant path.

  Note that~\(\Upsilon\) is induces by the continuous mapping
  \(X\xrightarrow{\upsilon} \PathS{X}\) that maps \(x\in X\) the
  corresponding constant path in~\(x\); here \(\PathS{X}\) is equipped
  with the compact-open topology. The mapping~\(\upsilon\) is
  continuous because the inverse image of a subbasic open
  set~\(\Basic(I,V)\subseteq \PathS{X}\) is~\(V\).
\item\label{it:cov-space-homeo} For \([e_x]\in \FGd(X)^{(0)}\), where
  \(e_x\) is the constant path at \(x\in X\), the fibre over its range
  is
  \[
    \FGd(X)^{[e_x]} = \{[\gamma]\in \FGd(X): \gamma(1)=x\}.
  \]
  Recall the standard construction of the simply connected covering
  space~\(\tilde{X}\) of~\(X\), say
  from~\cite{Hatcher2002Alg-Top-Book}. Then, it is clear that
  \(\FGd(X)^{[e_x]}\) is in one-to-one correspondence
  with~\(\tilde{X}\). The covering map \(p\colon \tilde{X} \to X\) is
  identified with the restriction~\(s|_{\FGd(X)^{[e_x]}}\) of the
  source map.
\item\label{it:fgp-homeo} Finally, for \(x\in X\), the fundamental
  group \(\FGp(X,x)=\FGd(X)^{[e_x]}_{[e_x]}\).
\end{enumerate}
  
\noindent The next corollary shows the above bijections of sets are
actually homeomorphism of space.

\begin{corollary}
  \label{cor:subspace-top-on:fibre-UC-top}
  Let \(X\) be path connected, locally path connected and semilocally
  simply connected space.
  \begin{enumerate}
  \item The mapping~\(\Upsilon\) in Point~\ref{it:base-homeo} above is
    a homeomorphism of spaces.
  \item In Point~\ref{it:cov-space-homeo}, the subspace topology
    on~\(\FGd(X)^{e_x}\) is same as the covering space topology on it.
  \item With the subspace topology of~\(\FGd(X)\), the fundamental
    group of~\(X\) is a discrete group, cf.Point~\ref{it:fgp-homeo}
    above.
  \end{enumerate}
  
\end{corollary}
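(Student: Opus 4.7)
The plan is to handle the three parts in order, exploiting the UC description of the topology on $\FGd(X)$ (Definition~\ref{def:UC-top}) throughout; basic UC neighbourhoods restrict very cleanly to fibres.

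For part (1), $\Upsilon$ is continuous because it factors as $q\circ\upsilon$, where $q\colon\PathS{X}\to\FGd(X)$ is the quotient map (continuous by definition of the quotient topology) and $\upsilon\colon X\to\PathS{X}$ is the already-established continuous map. For the inverse I would use the endpoint evaluation $\mathrm{ev}_0\colon\PathS{X}\to X$, $\gamma\mapsto\gamma(0)$, which is continuous by Property~\ref{prop:CO-evaluation-map}; since endpoint-fixing homotopies preserve $\gamma(0)$, this map is constant on each equivalence class and therefore descends, by the universal property of the quotient topology, to a continuous map $\FGd(X)\to X$. Its restriction to $\FGd(X)^{(0)}$ is exactly $\Upsilon^{-1}$.

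For part (2), I would compare basic open sets directly. Take $[\gamma]\in\FGd(X)^{[e_x]}$, i.e.\ $\gamma(1)=x$, and fix a basic UC neighbourhood $N([\gamma],U,V)$ with $U,V$ path connected and relatively inessential around $\gamma(1)=x$ and $\gamma(0)$ respectively. Intersecting with the fibre over $[e_x]$ forces the endpoint of $\delta\oblong\gamma\oblong\theta$ to remain $x$, so $\delta(1)=x$; then $\delta$ is a loop in $U$ at $x$, which is null-homotopic in $X$ by relative inessentiality, hence $[\delta\oblong\gamma\oblong\theta]=[\gamma\oblong\theta]$. The trace on the fibre is therefore $\{[\gamma\oblong\theta]:\theta\in\PathS{V},\ \theta(1)=\gamma(0)\}$, which is exactly a Hatcher-style basic neighbourhood of $[\gamma]$ in the covering-space topology on $\tilde{X}$. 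Since such sets form a basis on both sides, the two topologies coincide.

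Part (3) follows the same template: a class in $N([\gamma],U,V)\cap\FGp(X,x)$ is forced, in addition, to \emph{start} at $x$, so both $\delta$ and $\theta$ become loops at $x$ in relatively inessential open sets and are thus null-homotopic in $X$. Hence the intersection collapses to the singleton $\{[\gamma]\}$, showing $\FGp(X,x)$ is discrete. The only real obstacle is keeping the right-to-left composition convention straight (in particular, which of $\gamma(0),\gamma(1)$ the ``source'' and ``range'' refer to); once that bookkeeping is fixed, the argument is routine, because relative inessentiality kills the ``wiggle'' of the end factors $\delta,\theta$ the moment one restricts to a single fibre or to a double fibre over the same unit.
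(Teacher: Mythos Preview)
Your argument is correct in all three parts, but the route differs from the paper's in two places.

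For (1), the paper shows that~\(\Upsilon\) is \emph{open} by exhibiting \(\Upsilon(U)=\FGd(X)^{(0)}\cap N([e_z],U,U)\) for a relatively inessential~\(U\) and any \(z\in U\). You instead show that~\(\Upsilon^{-1}\) is continuous by descending the evaluation map through the quotient. Your route is slightly slicker, since the descended map is nothing but the source map of the groupoid, whose continuity is already part of Theorem~\ref{thm:fund-gpd-top}; the paper's route, on the other hand, keeps everything phrased in the UC basis and avoids invoking the CO' side.

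For (2), the paper simply declares the claim ``clear'' from the standard construction of the simply connected cover. Your explicit computation of the trace \(N([\gamma],U,V)\cap\FGd(X)^{[e_x]}\) is exactly the content that the paper leaves implicit; it is correct and arguably what the paper \emph{should} have written. One small point: you only spell out that UC-basic traces are Hatcher-basic sets; the converse (every Hatcher-basic set arises as such a trace) is immediate by choosing any relatively inessential~\(U\ni x\), but it is worth a half-sentence.

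For (3), the paper argues indirectly: by (1) and (2) the source map restricted to \(\FGd(X)^{[e_x]}\) is the covering map \(p\colon\tilde{X}\to X\), hence a local homeomorphism, and \(\FGp(X,x)=p^{-1}(x)\) is therefore discrete. Your direct argument (both \(\delta\) and \(\theta\) become null-homotopic loops, collapsing the UC-neighbourhood to a singleton) is more elementary and self-contained; the paper's version is shorter but leans on the covering-space identification you have just established.
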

\begin{proof}
  (1): The bijection~\(\Upsilon\) is continuous as it is the composite
  of continuous functions
  \(X\xrightarrow{\upsilon} \PathS{X} \xrightarrow{q} \FGd(X)\). We
  show that~\(\Upsilon\) is also open. For that let \(U\subseteq X\)
  be a nonempty relatively inessential open set.  Fix~\(z\in U\). Then
  \(\Upsilon(U) = \FGd(X)^{(0)} \cap N([e_z], U, U)\). Thus
  \(\Upsilon(U)\) is open in the subspace topology
  of~\(\FGd(X)^{(0)}\).

  \noindent (2): This is clear if one looks at the proof of
  construction of the simply connected covering space of~\(X\). And
  the covering map \( \FGd(X)^{[e_x]}\to \FGd(X)^{(0)}\) is the
  restriction of the source map of the groupoid.

  \noindent (3): Due to~(1) and~(2), we can identify the restriction
  of the source map \(\FGd(X)^{[e_x]} \to \FGd(X)^{(0)}\) with the
  universal covering map~\(p \colon \tilde{X} \to X\) which is a local
  homeomorphism. As the fundamental
  group~\(\FGp(X,x) = p\inverse(\{x\})\), it is discrete.
\end{proof}

\noindent Reader may readily see that in (2)~of
Corollary~\ref{cor:subspace-top-on:fibre-UC-top}~\(r\inverse([e_x])\)
can be replaced by~\(s\inverse([e_x])\) with appropriate
modifications. If~\(X\) is~\(T^1\), then it follows from~(2) of last
lemma that the covering space is a closed subspace of~\(\FGp(X)\).

Using Corollary~\ref{cor:subspace-top-on:fibre-UC-top}, we abuse the
language and notation by saying the~\(X\) is the space of units
of~\(\FGd(X)\), and a fibre over a unit~\(x\in X\) under the range (or
source) map is the universal covering space~\(\tilde{X}\) of~\(X\)
at~\(x\). Also, while thinking of the fundamental group as topological
group, we shall consider it to be discrete.

\begin{observation}\label{obs:dynamics-map-loc-homeo}
  For the space~\(X\) in last
  Corollary~\ref{cor:subspace-top-on:fibre-UC-top}, consider the map
  \[
    r\times s \colon \FGd(X) \to X \times X; \quad r\times s([\gamma])
    = (\gamma(1), \gamma(0)) \quad \text{ for } [\gamma]\in \FGd(X).
  \]
  Take a basic UC-open neighbourhood \(N([\alpha], U, V)\) of
  \([\alpha] \in \FGd(X)\) wherein \(U\) and \(V\) are path connected
  relatively inessential neighbourhoods of~\(\gamma(1)\)
  and~\(\gamma(0)\), respectively. Then
  \((r\times s) (N([\alpha], U, V)) = U\times V\) where the latter set
  is a basic open set in~\(X\times X\). Thus the mapping~\(r\times s\)
  is a local homeomorphism. Since~\(X\) is path connected, this
  mapping is also surjective.
\end{observation}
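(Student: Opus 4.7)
The plan is to establish three things in order: (i) the displayed equality $(r\times s)(N([\alpha],U,V)) = U\times V$; (ii) that the restriction $(r\times s)|_{N([\alpha],U,V)}$ is a bijection onto $U\times V$; (iii) that this restriction is a homeomorphism, so $r\times s$ is a local homeomorphism at $[\alpha]$. Surjectivity of $r\times s$ on the nose is immediate from path connectedness of $X$: for any $(x,y)\in X\times X$, pick any path in $X$ from $y$ to $x$.

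For (i), the forward inclusion is a direct endpoint computation. An element of $N([\alpha],U,V)$ has the form $[\delta\oblong\alpha\oblong\theta]$ with $\delta\in\PathS{U}$, $\theta\in\PathS{V}$, so its range and source are $\delta(1)\in U$ and $\theta(0)\in V$ respectively. For the reverse inclusion, given $(u,v)\in U\times V$, use path connectedness of $U$ to pick a path $\delta\in\PathS{U}$ from $\alpha(1)$ to $u$, and path connectedness of $V$ to pick $\theta\in\PathS{V}$ from $v$ to $\alpha(0)$; the class $[\delta\oblong\alpha\oblong\theta]$ lies in $N([\alpha],U,V)$ and has the required endpoints.

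For (ii), I would exploit the relatively inessential hypothesis on $U$ and $V$. Suppose $[\delta_1\oblong\alpha\oblong\theta_1]$ and $[\delta_2\oblong\alpha\oblong\theta_2]$ lie in $N([\alpha],U,V)$ and share the same pair of endpoints. Then $\delta_1$ and $\delta_2$ are both paths in $U$ from $\alpha(1)$ to a common point $u\in U$, so their loop concatenation is a loop in $U$; since $U$ is relatively inessential in $X$, this loop is null-homotopic in $X$, hence $\delta_1\sim_p\delta_2$ in $X$. The same argument applied inside $V$ gives $\theta_1\sim_p\theta_2$. Therefore the two classes coincide in $\FGd(X)$.

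For (iii), continuity of $r\times s$ follows from Theorem~\ref{thm:fund-gpd-top}, which guarantees $r$ and $s$ are continuous. To see openness of the restricted map onto $U\times V$, it suffices to check it on the basis of UC-open sets. Given any basic open set $N([\beta],U',V')\subseteq N([\alpha],U,V)$, I would first use local path connectedness and semilocal simple connectedness to shrink $U'$ and $V'$ to path connected, relatively inessential neighbourhoods of $\beta(1)$ and $\beta(0)$ that sit inside $U$ and $V$ respectively; by (i) applied at $[\beta]$, the image is the basic open set $U'\times V'\subseteq U\times V$. The only subtle step is (ii), which is where the geometry of $X$ genuinely enters; everything else is either bookkeeping or a direct consequence of the topological groupoid structure already proved.
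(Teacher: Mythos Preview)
Your argument is correct and follows the same line as the paper, which in the observation simply asserts the image equality \((r\times s)(N([\alpha],U,V))=U\times V\) and concludes; you have carefully supplied the injectivity check (ii) and the openness verification (iii) that the paper leaves implicit. The shrinking manoeuvre in (iii) is harmless but unnecessary: basic UC-open sets already have \(U',V'\) path connected and relatively inessential by Definition~\ref{def:UC-top}, and the containment \(U'\times V'\subseteq U\times V\) follows at once from (i) applied to both \(N([\beta],U',V')\) and \(N([\alpha],U,V)\).
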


\begin{proposition}\label{prop:gpd-of-simply-conn-space}
  Let \(X\) be a locally path connected and simply connected
  space. Then \(\phi \colon \FGd(X) \to X\times X\) by
  \(\phi([\gamma]) = (\gamma(1), \gamma(0))\) is an isomorphism of
  topological groupoids where the latter is the groupoid of the
  trivial equivalence on~\(X\).
\end{proposition}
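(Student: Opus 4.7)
The plan is to show that $\phi$ is a well-defined bijective groupoid morphism and then promote it to an isomorphism of topological groupoids by invoking Observation~\ref{obs:dynamics-map-loc-homeo}. Most of the work is algebraic/set-theoretic; the topological upgrade is essentially free once the observation is in hand.

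First I would verify that $\phi$ is a groupoid morphism into the groupoid of the trivial equivalence relation on~\(X\) (Example~\ref{exa:gpd-of-trivial-equi-rel}). The map is well-defined because path-homotopy preserves endpoints. It sends the unit $[e_x]$ to the diagonal element $(x,x)$, and for composable classes $[\alpha],[\beta]$ with $\alpha(0)=\beta(1)$ one has
\[
 \phi([\alpha\oblong\beta]) = (\alpha(1),\beta(0)) = (\alpha(1),\alpha(0))\cdot(\beta(1),\beta(0)) = \phi([\alpha])\,\phi([\beta]),
\]
so $\phi$ respects the range, source and multiplication. Compatibility with inversion is immediate from $\gamma^-(0)=\gamma(1)$.

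Next I would establish bijectivity. A simply connected space is in particular path connected, so given $(x,y)\in X\times X$ we can choose any path $\gamma$ from $y$ to~$x$ in~\(X\); then $\phi([\gamma])=(x,y)$, proving surjectivity. For injectivity, if $\phi([\gamma_1])=\phi([\gamma_2])$ then $\gamma_1$ and $\gamma_2$ are two paths in~\(X\) sharing both endpoints, and simple connectedness of~$X$ forces them to be path-homotopic, hence $[\gamma_1]=[\gamma_2]$.

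Finally, to get the topological conclusion I would notice that $\phi$ is literally the map $r\times s$ of Observation~\ref{obs:dynamics-map-loc-homeo}. Simple connectedness trivially implies semilocally simply connectedness (take the whole space as the relatively inessential neighbourhood of each point), so the hypotheses of the observation hold and $\phi$ is a local homeomorphism. A bijective local homeomorphism is a homeomorphism, and combined with the groupoid morphism check this yields the required isomorphism of topological groupoids. The only potentially nontrivial step is the local homeomorphism property, but this is already handled by Observation~\ref{obs:dynamics-map-loc-homeo}, whose proof identifies the image $(r\times s)(N([\alpha],U,V)) = U\times V$ of a basic UC-open set with a basic product-open set in~$X\times X$; thus no further obstacle remains.
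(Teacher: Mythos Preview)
Your proof is correct and follows essentially the same approach as the paper: verify that $\phi$ is a bijective algebraic groupoid isomorphism (using path connectedness and simple connectedness of $X$), then identify $\phi$ with the map $r\times s$ of Observation~\ref{obs:dynamics-map-loc-homeo} to conclude that it is a local homeomorphism, hence a homeomorphism. The paper's proof is terser but proceeds along exactly these lines, and even records (just before its proof) the same remark you made that simple connectedness implies semilocally simply connectedness, so that the Observation applies.
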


\noindent Note that, by definition, a simply connected space is path
connected. And such a space is automatically semilocally simply
connected. Reader may recall the groupoid of trivial equivalence from
Example~\ref{exa:gpd-of-trivial-equi-rel}.

\begin{proof}[Proof of
  Proposition~\ref{prop:gpd-of-simply-conn-space}]
  
  Since \(X\) is simply connected, the map \(\phi\) defined in the
  proposition is bijective. It can be easily check that this is an
  isomorphisms of algebraic groupoids. Note that the map~\(\phi\) in
  above proposition is essentially the map~\(r\times s\) in
  Observation~\ref{obs:dynamics-map-loc-homeo}. Hence,~\(\phi\) is
  continuous. This remark also says that~\(\phi\) is a local
  homeomorphism, therefore, it is an open map.
\end{proof}

We end this subsection with some concluding remarks.

\begin{remark}
  \label{rem-Fgd-local-trivial-not-etale}
  (1) and~(2) in Corollary~\ref{cor:subspace-top-on:fibre-UC-top}
  clearly show that the fundamental groupoid is a locally trivial
  groupoid (Definition~\ref{def:loc-triv-gpd}) but not an {\etale}
  (Definition~\ref{def:etal-gpd}) unless the path connected space is a
  point. Thus the claim in Example~\cite[Example
  2.1.4]{Khalkhali2013Basic-NCG-Book} that a fundamental groupoid is
  {\etale} is not correct. At the same time,
  Observation~\ref{obs:dynamics-map-loc-homeo} describes the local
  triviality condition for the fundamental groupoid.
\end{remark}

\begin{remark}
  \label{rem:CO-strict-coarser}
  For general topological spaces, it is possible that the CO' topology
  is strictly coarser than the UC~one. For example, denote the
  Hawaiian earring by~\(X\), and equip its fundamental groupoid with
  the CO' topology. Then Fabel~\cite{Fabel2011HE-Counterexample} shows
  that the multiplication on the fundamental group of~\(X\) is
  discontinuous in the subspace topology. This fact, in the light of
  Lemma~\ref{lem:CO-coarser} and
  Corollary~\ref{cor:subspace-top-on:fibre-UC-top} implies that the
  CO' topology on~\(\FGd(X)\) is strictly coarser than the
  UC~topology. In fact,
  Theorem~1.1~\cite{Calcut-McCarthy2009Top-Fund-Gp} proves a stronger
  fact, namely, the restrictions of these topologies on the
  fundamental group of a locally path connected space agree \emph{iff}
  the space is semilocally simply connected. Thus this theorem also
  explains the issue with the topology on the fundamental group of the
  Hawaiian earring.
\end{remark}

\begin{remark}
  \label{rem:Top-FGd-stroger-than-Alg-FGp}
  Let \(X\) be the infinite dimensional separable Hilbert space of
  square summable complex sequences. Let~\(X^*\) be its dual
  space. Then \(X^*\) carries two \emph{distinct} topologies, namely,
  the norm topology and weak\(^*\)\nb-topology. With respect to both
  topologies, \(X^*\) is a convex topological vector space.  Therefore
  \(X^*\) is path connected, locally path connected and simply
  connected in both the topologies. The algebraic fundamental groupoid
  of \(X^*\) with norm topology is isomorphic to the algebraic
  fundamental groupoid of \(X^*\) with weak\(^*\)\nb-topology---in
  both cases this is isomorphic to the groupoid of trivial equivalence
  on~\(X^*\) as in
  Proposition~\ref{prop:gpd-of-simply-conn-space}. But their
  topological fundamental groupoids are different as the \emph{spaces
    of the units} of these groupoids are different. This observation
  ensure that the topological fundamental groupoid is a sharper
  invariant than the algebraic one.
\end{remark}

\subsection{Functoriality}
\label{sec:functoriality}

\begin{lemma}
  \label{lem:cont-gpd-homo}
  Let \(f\colon X \to Y\) be a continuous map between two locally path
  connected and semilocally simply connected topological spaces. Then
  \(f_*\colon \FGd(X) \to \FGd(Y)\) given by
  \([\alpha] \mapsto f_*([\alpha]) \defeq [f\circ \alpha]\) is a
  continuous groupoid homomorphism.
\end{lemma}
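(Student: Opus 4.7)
The plan is to verify the three statements in sequence: well-definedness of $f_*$ on homotopy classes, its being a groupoid homomorphism, and its continuity in the UC (equivalently, CO') topology.

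First, for well-definedness, if $\alpha \sim_p \beta$ via a path homotopy $H \colon \UInt \times \UInt \to X$, then $f \circ H$ is a path homotopy between $f \circ \alpha$ and $f \circ \beta$, since the composite of continuous maps is continuous and the endpoints are preserved. Hence $[f\circ \alpha] = [f\circ \beta]$, and $f_*$ is a well-defined set map. For the groupoid homomorphism property, $f$ sends constant paths to constant paths, so units are preserved; and for a composable pair $(\alpha,\beta)$ in $\PathS{X}$, the identity $f\circ(\alpha\oblong\beta)=(f\circ\alpha)\oblong(f\circ\beta)$ (which is pointwise obvious) shows that $f_*$ preserves composition. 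Clearly $f_*$ also intertwines the range, source and inversion maps.

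For continuity, I would work with the UC topology on both groupoids, which is justified by Proposition~\ref{prop:CO-UC-tops-same}. Let $[\alpha]\in\FGd(X)$ and let $N([f\circ\alpha],U,V)$ be a basic UC\nobreakdash-open neighbourhood of $f_*([\alpha])$, where $U$ and $V$ are path connected relatively inessential open neighbourhoods of $f(\alpha(1))$ and $f(\alpha(0))$ in $Y$. Using continuity of $f$, the sets $f\inverse(U)$ and $f\inverse(V)$ are open neighbourhoods of $\alpha(1)$ and $\alpha(0)$ in $X$. Since $X$ is locally path connected and semilocally simply connected, I can shrink them to obtain path connected relatively inessential open neighbourhoods $U'\subseteq f\inverse(U)$ of $\alpha(1)$ and $V'\subseteq f\inverse(V)$ of $\alpha(0)$, so that $f(U')\subseteq U$ and $f(V')\subseteq V$.

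Then $N([\alpha],U',V')$ is a basic UC\nobreakdash-open neighbourhood of $[\alpha]$. To finish, I would check the containment $f_*\bigl(N([\alpha],U',V')\bigr)\subseteq N([f\circ\alpha],U,V)$: a representative in $N([\alpha],U',V')$ has the form $\delta\oblong\alpha\oblong\theta$ with $\delta\in\PathS{U'}$ and $\theta\in\PathS{V'}$, so its image under $f_*$ is represented by $(f\circ\delta)\oblong(f\circ\alpha)\oblong(f\circ\theta)$ with $f\circ\delta\in\PathS{U}$ and $f\circ\theta\in\PathS{V}$, as required. There is no real obstacle here; the only point of care is to invoke local path connectedness and semilocal simple connectedness at the correct step in order to guarantee that the shrunken neighbourhoods $U'$ and $V'$ lie in the preferred subbasis of Lemma~\ref{lem:subcov-CO-top}(2), so that $N([\alpha],U',V')$ is genuinely a basic UC\nobreakdash-open set.
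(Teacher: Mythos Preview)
Your proof is correct and follows essentially the same route as the paper's own argument: both verify continuity via the UC topology by pulling back a basic neighbourhood \(N([f\circ\alpha],U,V)\) along \(f\) and checking the obvious containment \(f_*\bigl(N([\alpha],U',V')\bigr)\subseteq N([f\circ\alpha],U,V)\). The only difference is cosmetic: the paper cites the algebraic functoriality of \(\FGd\) as standard and does not bother to shrink \(f^{-1}(U),f^{-1}(V)\) to path connected relatively inessential sets (since \(N([\alpha],U',V')\) is UC-open for arbitrary open \(U',V'\) anyway), whereas you spell out well-definedness, the homomorphism property, and the shrinking step explicitly.
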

\begin{proof}
  It is a standard fact that \(f\mapsto f_*\) is a functor from the
  category of topological spaces to (algebraic) fundamental
  groupoids. Therefore, all we need to show the continuity of
  continuity of \(f_*\). For that let \(N([f\circ \alpha], V, U)\) be
  an UC\nb-open neighbourhood of \([f \circ \alpha] \in \FGd(Y)\),
  where \(V\) and \(U\) be two open neighbourhoods of \(f(\alpha(1))\)
  and \(f(\alpha(0))\), respectively. By continuity of \(f\) there
  exist open neighbourhoods \(V^\prime\) of \(\alpha(1)\) and
  \(U^\prime\) of \(\alpha(0)\) satisfying \(f(V^\prime) \subseteq V\)
  and \(f(U^\prime) \subseteq U\). Now it can be readily seen that
  \(f_*(N([\alpha], U', U')) \subseteq N([f\circ \alpha], U,
  V)\). \qedhere
\end{proof}

Lemma~\ref{lem:cont-gpd-homo} can be proved using the CO' topology as
well. For that, one should notice that the \(f\colon X\to Y\) induces
a path homotopy preserving continuous map~\(\PathS{X}\to \PathS{X}\)
which, in turn, induces a map of fundamental groupoids.

Let \(\TCat\) be the category of path connected, locally path
connected and semilocally simply connected topological spaces;
\(\GCatAlg\) the category of algebraic groupoids with algebraic
homomorphisms are morphisms; and \(\GCat\) be obtained by
enriching~\(\GCatAlg\) with topology. Thus~\(\GCat\) is the category
consist of topological groupoids with continuous groupoid homomorphism
as morphism. It is well-known that sending a space to its fundamental
groupoid is a functor \(\TCat\to
\GCatAlg\). Lemma~\ref{lem:cont-gpd-homo} combined with this standard
fact gives us the next result:

\begin{theorem}
  The assignment \(\FGd\colon \TCat\to \GCat\) that sends a an space
  \(X\mapsto \FGd(X)\) and a map of space \(X\xrightarrow{f}Y\) to
  \(\FGd(X)\xrightarrow{f_*} \FGd(Y)\) in
  Lemma~\ref{lem:cont-gpd-homo} is a covariant functor.
\end{theorem}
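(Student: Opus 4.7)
The plan is to reduce the claim to two essentially disjoint ingredients: the well-known algebraic functoriality of $\FGd$, and the continuity statement already proved in Lemma~\ref{lem:cont-gpd-homo}. Since a functor is just a pair of assignments (on objects and on morphisms) satisfying identity-preservation and composition-preservation, the whole verification will amount to checking that the enriched (topological) setting does not spoil these two identities.

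First, I would record that on objects the assignment $X \mapsto \FGd(X)$ lands in $\GCat$: by Theorem~\ref{thm:fund-gpd-top}, for $X\in \TCat$, the fundamental groupoid $\FGd(X)$ equipped with the UC (equivalently CO') topology is a topological groupoid, hence an object of $\GCat$. Next, on morphisms, for a continuous $f\colon X\to Y$ between objects of $\TCat$, Lemma~\ref{lem:cont-gpd-homo} already supplies $f_*\colon \FGd(X)\to \FGd(Y)$ as a continuous groupoid homomorphism, so $f_*$ is a morphism in $\GCat$.

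It then remains to verify the two axioms. For the identity, observe that for any $[\alpha]\in \FGd(X)$, $(\Id_X)_*([\alpha]) = [\Id_X\circ \alpha] = [\alpha]$, so $(\Id_X)_* = \Id_{\FGd(X)}$. For composition, if $f\colon X\to Y$ and $g\colon Y\to Z$ are morphisms in $\TCat$, then for any $[\alpha]\in \FGd(X)$,
\[
(g\circ f)_*([\alpha]) = [(g\circ f)\circ \alpha] = [g\circ(f\circ \alpha)] = g_*([f\circ \alpha]) = g_*(f_*([\alpha])),
\]
whence $(g\circ f)_* = g_*\circ f_*$. Both identities are precisely the ones already known at the level of $\GCatAlg$; the enrichment by topology is harmless because the underlying maps of sets are unchanged.

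Essentially this theorem is a packaging step rather than a mathematical hurdle: the only non-trivial analytic content is Lemma~\ref{lem:cont-gpd-homo}, which has already been dispensed with. I do not anticipate any obstacle — the proof should be a short three-line assembly citing Theorem~\ref{thm:fund-gpd-top} (to place objects in $\GCat$), Lemma~\ref{lem:cont-gpd-homo} (to place morphisms in $\GCat$), and the classical algebraic functoriality of $\FGd\colon \TCat\to \GCatAlg$ (to conclude identity- and composition-preservation).
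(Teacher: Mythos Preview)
Your proposal is correct and follows essentially the same approach as the paper: the paper states the theorem without a formal proof, noting only that it follows from the standard algebraic functoriality of $\FGd\colon \TCat\to \GCatAlg$ combined with Lemma~\ref{lem:cont-gpd-homo}, which is precisely the decomposition you carry out (with the additional explicit citation of Theorem~\ref{thm:fund-gpd-top} for the object assignment).
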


\subsection{The fundamental groupoid of a group}
\label{sec:fund-group-group}

Let \(p\colon H\to G\) be a group homomorphism. Then~\(H\) acts on the
left of~\(G\) through~\(p\) as \(\eta \cdot \gamma = p(\eta) \gamma \)
for \(\gamma \in G\) and \(\eta\in H\). We denote the corresponding
transformation groupoid by~\(H\ltimes_p G\).

For a path connected group~\(G\), we follow the standard custom of
considering \(\pi_1(G)\) as~\(\pi_1(G,1_G)\) where \(1_G\in G\) is the
identity; this fundamental group is abelian~\cite[Corollary
3.21]{Rotman1988Alg-Top-Book}. Recall that for a path connected,
locally path connected and semilocally simply connected group~\(G\),
its simply connected cover~\(H\) is also a path connected, locally
path connected and semilocally simply connected \emph{group} and the
covering map~\(p\colon H\to G\) is a group homomorphism. In this
section, we fix to \(G,H\) and~\(p\) as in the last sentence. Our goal
is to prove that the fundamental groupoid of~\(G\) is isomorphic to
the transformation groupoid \(H\ltimes_p G\).

Throughout this section, we shall consider the universal cover~\(H\)
as the fibre~\(\FGd(G)_{1_G}\). In this case, the covering map
\(p\colon H\to G\) is given by \(p([\gamma]) =\gamma(1)\) for
\([\gamma]\in H\).  And for \([\eta], [\gamma] \in H\), the group
multiplication~\(\otimes\) in~\(H\) is given by
\([\gamma]\otimes [\eta] =[\gamma\otimes' \eta]\) where
\(\gamma\otimes' \eta\) is the path \(t\mapsto \gamma(t) \eta(t)\). We
shall abuse the notation and write~\(\otimes\) instead of~\(\otimes'\)
now on.

For any path \(\gamma\) in \(G\) and any \(g\in G\), define the
translated path~\(\gamma\cdot g\) as \(t\mapsto \gamma(t)g\) where
\(t\in \UInt\). Note that
\[
  (\eta\otimes \gamma)\cdot g = (\eta\otimes \gamma \cdot g)
\]
where \(\eta,\gamma\in \PathS{G}_{1_G}\) and~\(g\in
G\). Lemma~\ref{lem:joint-of-translated-path} explains how the
concatenation and translation of paths behave when
combined. Occasionally, we shall use~\(\cdot\) also to denote the
multiplication in a group---this shall lead to no confusion.  In
general, the concatenation, product of paths and the translates of
paths have following properties: for paths
\(\gamma,\eta,\delta,\theta\in \PathS{G}\) with
\(\delta(0) = \theta(1)\), and \(g\in G\),
\begin{align*}
  (\eta\otimes \gamma) \cdot g &=   \eta\otimes(\gamma \cdot g ),\\
  (\delta\oblong \theta) \cdot g & = (\delta\cdot g)\oblong (\theta
                                   \cdot g).
\end{align*}

\noindent The multiplication by~\(g\) above is a homeomorphism
of~\(\PathS{G}\) which respected the (endpoint preserving) path
homotopy, that is, \( [\eta \cdot g] = [\eta] \cdot a\) for
the~\(\eta\) above. Therefore, for the paths as above we also have
\begin{align}
  [\eta\otimes \gamma] \cdot g &=   [\eta]\otimes
                                 [\gamma \cdot g] \label{eq:product-with-translate-1},\\
  [\delta\oblong \theta] \cdot g & = [\delta\cdot g]\oblong [\theta
                                   \cdot g]. \label{eq:product-with-translate-2}
\end{align}

Finally, we realise the space~\(\FGd(G)\) as the fibre product
\(\FGd(G)\times_{s,\base[\FGd(G)], \Id_{G}} G\defeq \{([\gamma], g) :
g\in G \text{ and } \gamma(0)=g\}\); this identification is useful to
prove our main result of this section.  Note that the basic open set
in~\(\FGd(G)\times_{s,\base[\FGd(G)], \Id_{G}} G\) corresponding to
the basic open set~\(N([\alpha], U,V)\subseteq \FGd(G)\)
is~\(N([\alpha], U,V) \times_{s,\base[\FGd(G)], \Id_{G}} V\).

\begin{lemma}
  \label{lem:cont-G-act-on-cover}
  The mapping~\(J\colon H\times G \to \FGd(G)\) given
  by~\(([\gamma], g)\mapsto ([\gamma\cdot g], g)\) is a homeomorphism.
\end{lemma}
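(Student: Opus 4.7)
The strategy is to exhibit $J$ as a continuous bijection whose inverse is also continuous, the key technical step being joint continuity of right-translation on the compact-open path space. Bijectivity is direct: the candidate inverse is $([\alpha], g) \mapsto ([\alpha \cdot g^{-1}], g)$, using the standing identification $\FGd(G) \cong \FGd(G) \times_{s, \base[\FGd(G)], \Id_G} G$ that sends $[\alpha]$ to $([\alpha], \alpha(0))$. Since $(\alpha \cdot g^{-1})(0) = \alpha(0) \cdot g^{-1} = g \cdot g^{-1} = 1_G$, the first coordinate indeed lands in $H$, and a short computation using that translation respects endpoint-fixing path homotopy (as recorded right before the statement) verifies that the two composites are identities.

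For continuity of $J$, I would first establish that the right-translation map $\Phi\colon \PathS{G} \times G \to \PathS{G}$, $(\gamma, g) \mapsto \gamma \cdot g$, is continuous in the compact-open topology. This follows from the exponential law: its adjoint $\PathS{G} \times G \times \UInt \to G$, $(\gamma, g, t) \mapsto \gamma(t) \cdot g$, is continuous by joint continuity of multiplication in $G$ together with Property~\ref{prop:CO-evaluation-map} (using the local compactness of $\UInt$). Then $q \circ \Phi$ restricted to $\PathS{G}_{1_G} \times G$ is continuous and factors as $J \circ (q|_{\PathS{G}_{1_G}} \times \Id_G)$. By Corollary~\ref{cor:quotient-from-PX-to-PiX-open}, $q$ is open, and since $q^{-1}(H) = \PathS{G}_{1_G}$ (every representative of a class in $H$ starts at $1_G$), the restriction $q|_{\PathS{G}_{1_G}}\colon \PathS{G}_{1_G} \to H$ is an open surjection; taking its product with $\Id_G$ preserves openness and hence yields a quotient map. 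The universal property of the quotient now delivers continuity of $J$.

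For continuity of $J^{-1}$, define $\Psi\colon \PathS{G} \to \PathS{G}_{1_G}$ by $\alpha \mapsto \alpha \cdot \alpha(0)^{-1}$. This is continuous as the composite of $\alpha \mapsto (\alpha, \alpha(0)^{-1})$ (continuous via evaluation at $0$ and inversion in $G$) with $\Phi$. Since right-translation preserves endpoint-fixing path homotopy, $\Psi$ descends through $q$ to a continuous map $\FGd(G) \to H$; paired with the continuous source map $s\colon \FGd(G) \to G$, this gives precisely $J^{-1}$. The main obstacle is the continuity of $\Phi$: once this joint continuity in the compact-open topology is secured, the openness of $q$ from Corollary~\ref{cor:quotient-from-PX-to-PiX-open} handles the descent in both directions essentially formally.
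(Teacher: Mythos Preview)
Your proof is correct and takes a genuinely different route from the paper. The paper argues directly in the UC topology: it checks that the preimage under \(J\) of a basic open set \(N([\alpha],U,V)\times_{s,\base[\FGd(G)],\Id_G} V\) in \(\FGd(G)\) is exactly the basic product-open set \(N([\alpha],U)\times V\) in \(H\times G\), and conversely that \(J\) carries such product sets to basic UC-open sets, so \(J\) is a continuous open bijection. Your approach works entirely through the CO' picture: joint continuity of right translation on \(\PathS{G}\) via the exponential law, followed by descent through the open quotient \(q\) (Corollary~\ref{cor:quotient-from-PX-to-PiX-open}). The paper explicitly flags this alternative in the remark following the lemma, warning that one must justify that the product topology on \(\FGd(G)_{1_G}\times G\) agrees with the quotient topology from \(\PathS{G}_{1_G}\times G\); your observation that \(q|_{\PathS{G}_{1_G}}\) is open (being the restriction of an open map to a saturated subspace) is precisely what settles that concern, since then \(q|_{\PathS{G}_{1_G}}\times \Id_G\) is open and hence a quotient map. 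The UC argument is shorter once the basic sets are in hand; your argument is more functorial, makes essential use of Corollary~\ref{cor:quotient-from-PX-to-PiX-open}, and would transfer to other situations where a group acts continuously on the path space.
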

\begin{proof}
  Let \(N([\alpha], U,V)\times_{s,\base[\FGd(G)], \Id_{G}}V\) be a
  basic open set in~\(\FGd(G)\) wherein~\(U,V\subseteq G\) are path
  connected and semilocally simply connected open sets. Its inverse
  image under~\(J\) is the set~\(N([\alpha],U)\times V\) which is a
  basic open set in the product topology. Conversely, if
  \((N([\alpha],U)\times V \subseteq H\times G\) is a basic open set
  wherein~\(U\) and~\(V\) are path connected and semilocally simply
  connected open sets in~\(G\) (such sets form a basis of~\(G\)), then
  \(J(N([\alpha],U)\times V) = N([\alpha], U, V)
  \times_{s,\base[\FGd(G)], \Id_{G}} V\). Therefore,~\(J\) is
  open. One can easily check that~\(J\) is invertible with its inverse
  given by~\(J\inverse([\gamma], g) = ([\gamma\cdot g\inverse], g)\).
\end{proof}
  
\noindent Reader may try to prove the last lemma using the CO'
topology. In that case, one needs to carefully justify that the
following two topologies on~\(\FGd(G)_{1_G}\times G\) agree:
\begin{itemize}
\item the product topology on~\(\FGd(G)_{1_G}\times G\),
\item the quotient topology on it for the obvious quotient map
  \(\PathS{G}_{1_G}\times G \to \FGd(G)_{1_G}\times G\).
\end{itemize}

Let \(\eta,\gamma\) be paths in~\(G\) starting at the
identity~\(1_G\). Then the paths \(\eta\cdot \gamma(1)\) is
concatinable with \(\gamma\) as
\((\eta\cdot\gamma(1))(0) = \eta(0)\gamma(1) = \gamma(1)\). Therefore,
the statement of the next lemma makes sense:
  
\begin{lemma}\label{lem:joint-of-translated-path}
  Let \(\eta,\gamma\) be paths in~\(G\) starting at the
  identity~\(1_G\).  Then, for any \(g\in G\),
  \[
    [(\eta\cdot \gamma(1) \oblong \gamma)\cdot g] = [(\eta\otimes
    \gamma)\cdot g].
  \]
\end{lemma}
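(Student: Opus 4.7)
The plan is to pull the claim back to the convex square~$\UInt \times \UInt$, where both parameterisations become paths from~$(0,0)$ to~$(1,1)$ and hence are path-homotopic rel endpoints via a straight-line homotopy. Once established for $g = 1_G$, the general case will follow because right translation by~$g$ is a self\nb-homeomorphism of~$G$ which preserves path homotopies.

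First I would verify that both $\eta \otimes \gamma$ and $(\eta\cdot\gamma(1))\oblong\gamma$ start at~$1_G$ and end at~$\eta(1)\gamma(1)$, and note that each factors through the continuous map
\[
  F \colon \UInt \times \UInt \to G, \qquad F(u, v) \defeq \eta(u)\,\gamma(v).
\]
Indeed, $\eta \otimes \gamma = F \circ \Delta$ with $\Delta(t) = (t, t)$ the diagonal, while a direct computation using $\eta(0) = 1_G$ shows $(\eta\cdot\gamma(1))\oblong\gamma = F \circ \sigma$, where~$\sigma$ is the L-shape path
\[
  \sigma(t) =
  \begin{cases}
    (0,\, 2t), & 0 \le t \le \tfrac{1}{2}, \\
    (2t - 1,\, 1), & \tfrac{1}{2} \le t \le 1.
  \end{cases}
\]

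Both $\Delta$ and $\sigma$ are paths in the convex set~$\UInt \times \UInt$ from~$(0,0)$ to~$(1,1)$, so the straight-line homotopy $K(s, t) \defeq (1-s)\Delta(t) + s\,\sigma(t)$ is a path homotopy rel endpoints between them. Composing with~$F$ yields a path homotopy rel endpoints between $\eta \otimes \gamma$ and $(\eta\cdot\gamma(1))\oblong\gamma$ in~$G$, settling the case $g = 1_G$. For a general $g \in G$, right multiplication $R_g \colon G \to G$ is a homeomorphism; forming $R_g \circ F \circ K \colon \UInt \times \UInt \to G$ produces a path homotopy rel endpoints between $(\eta \otimes \gamma)\cdot g$ and $(\eta\cdot\gamma(1) \oblong \gamma)\cdot g$, which is precisely what the lemma asserts.

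The argument is the classical ``square trick'' that underlies the proof that $\FGp$ of a topological group is abelian, adapted here to paths that need not be loops. There is no serious obstacle; the only point requiring care is the bookkeeping with the paper's right-to-left concatenation convention, to confirm that the piecewise formula for $(\eta\cdot\gamma(1))\oblong\gamma$ matches at the gluing point $t = \tfrac{1}{2}$.
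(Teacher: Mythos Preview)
Your argument is correct. Both your proof and the paper's work by pushing a homotopy in the parameter square through the map $F(u,v)=\eta(u)\gamma(v)$ (followed by right translation by~$g$), so the underlying idea is the same. The packaging differs: the paper writes down an explicit piecewise formula $H(s,t)=\eta\bigl(\max\{t+(t-1)s,0\}\bigr)\,\gamma\bigl(\min\{t+ts,1\}\bigr)\,g$ and checks the endpoint conditions by hand, whereas you factor the two paths as $F\circ\Delta$ and $F\circ\sigma$ and invoke the straight-line homotopy in the convex set~$\UInt\times\UInt$. Your version is the cleaner and more conceptual of the two---it makes transparent why the homotopy exists (convexity of the square) rather than exhibiting one formula and verifying---and it is exactly the ``square trick'' familiar from the proof that $\FGp$ of a topological group is abelian, as you note. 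Either way the content is identical; your bookkeeping with the right-to-left concatenation convention is correct.
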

\begin{proof}
  Recall that for two continuous functions, corresponding minimum and
  maximum functions are continuous.  Then for \(0\leq t\leq 1\),
  \begin{multline*}
    \bigg(\big(\eta \cdot \gamma(1) \oblong \gamma\big)\cdot g\bigg)
    (t) = \bigg( \big(\eta\cdot \gamma(1) \oblong \gamma \big) (t)
    \bigg)\cdot g =\begin{cases}
      \gamma(2t)g, & \quad \textup{ if } 0 \leq t \leq \frac{1}{2},\\
      \eta(2t-1)\gamma(1)g, & \quad \textup{ if } \frac{1}{2} \leq t
      \leq 1
    \end{cases}\\
    = \eta\big(\max\{2t-1, 0\} \big)\cdot \gamma\big(\min\{2t,
    1\}\big)\cdot g.
  \end{multline*}
  For \(s,t\in [0,1]\), consider the function
  \[
    H(s,t) = \eta\big(\max\{t+(t-1)s, 0\}\big)\cdot
    \gamma\big(\min\{t+ ts, 1\}\big) \cdot g.
  \]
  Then \(H\) implements a path homotopy between
  \(H(0,*) = (\eta\otimes \gamma)\cdot g\) and
  \(H(1,*)= (\eta\cdot \gamma(1) \oblong \gamma)\cdot g \). This is
  because~\(H\) is continuous, and
  \[
    H(s,0) = g =\eta(0) \gamma(0) g = \big((\eta\cdot \gamma(1)
    \oblong \gamma)\cdot g\big) (0),\] and
  \[
    H(s,1) = \eta(1)\gamma(1) g = ((\eta\otimes \gamma) \cdot g)(1) =
    \left((\eta \gamma(1) \oblong \gamma) g\right)(1)
  \]
  for \(0\leq s\leq 1\).
\end{proof}

\noindent Note that, if \(g=1_G\), then
Lemma~\ref{lem:joint-of-translated-path} allows us to express the
product in the universal covering group ~\(H\) in terms of the path
homotopies. We need this observation, to prove
Theorem~\ref{thm:FGd-of-gp:as-tranf} next. Before we do that,
following are three observations about
Lemma~\ref{lem:joint-of-translated-path} which we find worth listing;
however, they shall not be used in this article.

(I) Suppose that \(\gamma\) is a path in~\(G\) starting at the
identity and~\(\eta\) a path starting at~\(\gamma(1)\). Then
\[
  [\eta \oblong \gamma] = [\eta \otimes \gamma]
\]
by replacing \(\eta\) by~\(\eta\cdot\gamma(1)\inverse\) and
taking~\(g=\gamma(0)=1_G\) in the lemma.

(II) Next, let~\(\eta\) and~\(\gamma\) be paths in~\(G\) with
\(\eta(0) = \gamma(1)\).  Applying the lemma on the
paths~\(\eta \cdot \gamma(1)\inverse, \gamma\cdot \gamma(0)\inverse\)
starting at~\(1_G\) and the group element~\(g = \gamma(0)\), we get
\[
  [\eta \oblong \gamma \cdot \gamma(0)\inverse ] \gamma(0)= [\eta
  \otimes \gamma].
\]

(III) Finally, in the general setting, let \(\gamma\) and~\(\eta\) be
paths in~\(G\).  Applying the last lemma on the paths
\(\eta\cdot \eta(0)\inverse, \gamma\cdot\gamma(0)\inverse\)
and~\(g = \gamma(0)\) gives us
\[ \left([\eta\cdot \eta(0)\inverse(\gamma(1)
    \gamma(0)\inverse)\right) \oblong \left(\gamma\cdot
    \gamma(0)\inverse]\right) \cdot \gamma(0) = [\left(\left(\eta\cdot
      \eta(0)\inverse\right) \otimes \left(\gamma
      \cdot\gamma(0)\inverse\right)\right) \cdot \gamma(0)].
\]
Applying Equation~\eqref{eq:product-with-translate-1} on the last term
implies:
\[ [\left(\eta\cdot \eta(0)\inverse \left(\gamma(1) \gamma(0)\inverse
    \right) \right)\oblong \left(\gamma\cdot \gamma(0)\inverse\right)]
  \cdot \gamma(0) = [(\eta\cdot \eta(0)\inverse ) \otimes \gamma].
\]

\begin{theorem}
  \label{thm:FGd-of-gp:as-tranf}
  Let \(G\) be a locally path connected, path connected, semilocally
  simply connected topological group, and \(p\colon H\to G\) be its
  universal covering space. Then the topological fundamental
  groupoid~\(\FGd(G)\) is isomorphic to the transformation
  groupoid~\(H\ltimes_p G\).
\end{theorem}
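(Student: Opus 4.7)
The plan is to promote the homeomorphism $J\colon H\times G \to \FGd(G)$ of Lemma~\ref{lem:cont-G-act-on-cover}, defined by $J([\gamma], g) = ([\gamma\cdot g], g)$ under the identification $\FGd(G) \cong \FGd(G) \times_{s,\base[\FGd(G)], \Id_G} G$, into a topological groupoid isomorphism $H\ltimes_p G \to \FGd(G)$. Since $J$ is already known to be a homeomorphism, all that remains is to check that it preserves units, source and range, and composition.

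Units, source, and range are handled directly. The unit of $H\ltimes_p G$ at $g$ is $([e_{1_G}], g)$, which $J$ sends to $[e_{1_G}\cdot g] = [e_g]$, the unit of $\FGd(G)$ at $g$. Recalling that $p([\eta]) = \eta(1)$, the source and range in $H\ltimes_p G$ are $s([\eta], g) = g$ and $r([\eta], g) = \eta(1)g$; on the other side, $[\eta\cdot g]$ starts at $\eta(0)g = g$ and ends at $\eta(1)g$. Hence $J$ intertwines both structure maps.

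The substantive step is multiplicativity. Let $([\eta_1], g_1)$ and $([\eta_2], g_2)$ be a composable pair in $H\ltimes_p G$, so $g_1 = \eta_2(1)g_2$; their product is $([\eta_1\otimes\eta_2], g_2)$, with $J$-image $[(\eta_1\otimes\eta_2)\cdot g_2]$. The corresponding product in $\FGd(G)$ is $[(\eta_1\cdot g_1)\oblong(\eta_2\cdot g_2)]$. Substituting $g_1 = \eta_2(1)g_2$ and using Equation~\eqref{eq:product-with-translate-2} to factor the translate by $g_2$ outside the concatenation, this equals $[((\eta_1\cdot \eta_2(1))\oblong \eta_2)\cdot g_2]$. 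Applying Lemma~\ref{lem:joint-of-translated-path} with the roles $\eta\leftrightarrow \eta_1$, $\gamma\leftrightarrow \eta_2$, and $g = g_2$ identifies this with $[(\eta_1\otimes \eta_2)\cdot g_2]$, which matches the first expression.

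The main obstacle is precisely this multiplicativity check, since it reconciles two a priori unrelated products: the pointwise product $\otimes$ on $H$ inherited from the group operation on $G$, and the concatenation $\oblong$ coming from groupoid composition in $\FGd(G)$. All the topological content---bijectivity, continuity, openness---is already encoded in Lemma~\ref{lem:cont-G-act-on-cover}, and the compatibility of units and structure maps is immediate from the definitions. Once multiplicativity is established, $J$ is a continuous, open, bijective groupoid homomorphism, and hence a topological groupoid isomorphism $H\ltimes_p G \iso \FGd(G)$.
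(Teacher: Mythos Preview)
Your proof is correct and follows essentially the same route as the paper's: both use the homeomorphism~\(J\) from Lemma~\ref{lem:cont-G-act-on-cover} and reduce the theorem to verifying multiplicativity via Lemma~\ref{lem:joint-of-translated-path} after factoring out the translate by~\(g_2\) from the concatenation. Your explicit checks of units, source, and range are not spelled out in the paper but are equally straightforward there; you also cite Equation~\eqref{eq:product-with-translate-2} for the factoring step, which is indeed the relevant identity for concatenation.
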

\begin{proof}
  Define
  \[
    J \colon H\ltimes_p G \to \FGd(G) \quad \text{by}\quad J(([
    \gamma], g))= ([\gamma\cdot g], g)
  \]
  where \(([\gamma], g)\in H\times
  G\). Lemma~\ref{lem:cont-G-act-on-cover} shows that~\(J\) is a
  homeomorphism. All we need to do is to prove that~\(J\) is an
  (algebraic) homomorphism of groupoids.  For that, take two
  composable elements
  \(([ \eta], h), ([ \gamma], g)\in H\ltimes_p G\). Then
  \begin{itemize}
  \item \(h= p([\gamma]) g \defeq \gamma(1)g \), and
  \item
    \(([ \eta], h) ([ \gamma], g) = ([\eta]\otimes [\gamma], g) =
    ([\eta \otimes \gamma], g)\).
  \end{itemize}
  As a consequence,
  \begin{equation}
    \label{eq:image-of-prod-under-J}
    J(([ \eta], h) ([ \gamma], g)) = J([\eta \otimes \gamma], g) =
    ([(\eta \otimes \gamma) \cdot g], g)
  \end{equation}
  On the other hand, the images of these elements
  \(J( [\eta], h) = ([\eta\cdot h], h)\) and
  \(J([ \gamma], g) = ([\gamma\cdot g], g)\) are clearly composable in
  \(\FGd(G)\) as
  \((\gamma \cdot g)(1)= \gamma(1) g = h = (\eta\cdot
  h)(1)\). Furthermore, the concatenation of these images
  in~\(\FGd(G)\) is
  \[
    J([\eta], h) J([ \gamma], g) = ([\eta\cdot h], h) ([\gamma\cdot
    g], g) = ([(\eta\cdot h)\oblong (\gamma\cdot g)], g).
  \]
  By using Property~\eqref{eq:product-with-translate-1}, take
  the~\(g\) out in the path
  homotopy~\([(\eta\cdot h)\oblong (\gamma\cdot g)]\) above; then the
  last term in above computation equals
  \[
    ([(\eta\cdot h g\inverse)\oblong \gamma] \cdot g, g).
  \]
  Now apply Lemma~\ref{lem:joint-of-translated-path} to the first
  coordinate above so the last term equals
  \[
    ([(\eta \otimes \gamma)\cdot g], g) = J(([ \eta], h) ([ \gamma]
    g))
  \]
  as we wanted, cf. Equation~\eqref{eq:image-of-prod-under-J} above.
\end{proof}

Note that last theorem also shows that~\(J\) is simply an isomorphism
of algebraic groupoids if the topologies are ignored.

Since the groupoids \(\FGd(G)\) and~\(H\ltimes_p G\) in
Theorem~\ref{thm:FGd-of-gp:as-tranf} are isomorphic, the isotropies of
corresponding points are also isomorphic. The isotropy of the
unit~\(1_G\in G\) in the transformation groupoid~\(H\ltimes_p G\) is
\(\{\eta\in H : p(\eta) = e\} = \mathrm{ker}(p)\). Therefore,
\(\FGp(G, e)\) is isomorphic to the kernel of~\(p\).

An ending remark for this section is following:
Theorem~\ref{thm:FGd-of-gp:as-tranf} shows that for the fundamental
groupoid not to have the form of a transformation groupoid it is
necessary that the underlying space is not a topological
group. Following are the path connected, locally path connected and
semilocally simply connected spaces which are known to carry no group
structure (compatible with their given topologies).

 \begin{example}
   The sphere \(\Sph^n\subseteq \R^{n+1}\) for \(n\neq 0,1,3\).
 \end{example}

 \begin{example}\label{exa:fig-eight}
   The figure of eight in the plane. The fundamental group of a
   topological group is abelian~\cite[Exercise 7(d), \S
   52]{Munkress1975Topology-book}. The fundamental group of the figure
   of eight is not abelian.
 \end{example}

 \begin{example}
   The wedge sum of \(n\)-circles \(\vee_n\, \Sph^1\) for \(n>1\); for
   same reason as Example~\ref{exa:fig-eight}.
 \end{example}

 \section{The point-set topology\except{toc}{\protect\footnote{Thanks
       to Massey~\cite[Chapter five,
       Section~12]{Massey1977Alg-Top-Intro-reprint} for this title!}}
   of the fundamental groupoid}
 \label{sec:descr-fund-group}

 In the second subsection, we discuss the point-set topology of the
 fundamental groupoid~\(\FGd(X)\). We relate this topology with that
 of~\(X\) and the simply connected covering space of~\(X\). For this
 purpose, in the next subsection, we need an alternate construction of
 the fundamental groupoid which derived from a standard construction
 involving equivalence of groupoids.

 \subsection{One more description of the fundamental groupoid}
 \label{sec:one-more-description}

 \begin{definition}[Equivalence of
   groupoids~{\cite{Muhly-Renault-Williams1987Gpd-equivalence}}]
   \label{def:equi}
   Let \(G\) and \(H\) be groupoids. We call a space \(X\) an
   \(G\)-\(H\)-equivalence if \(X\) is a left \(G\)- and right
   \(H\)\nb-space, and following properties are satisfied:
   \begin{enumerate}
   \item the actions commute, that is,
     \(\gamma(x\eta) = (\gamma x)\eta\) for all appropriate
     \(\gamma\in G, x\in X\) and~\(\eta\in H\);
   \item both the actions are free and proper;
   \item the momentum maps induce homeomorphism \(G\7 X \to \base[H]\)
     and \(X/H \to \base[G]\).
   \end{enumerate}
 \end{definition}

 The next is a well-known example of equivalence of groupoids. It
 appeared in~\cite{Muhly-Renault-Williams1987Gpd-equivalence} as a
 basic example of equivalence of groupoids.
  
 \begin{example}[{\cite[Proposition~2.41]{Williams2019A-Toolkit-Gpd-algebra}}]
   \label{exa:equi-gpd}
   Let~\(H\) be a group and~\(X\) a right \(H\)\nb-space. Assume that
   the action of~\(H\) is free and proper. Then
   \(G\defeq (X\times X)/ H\) is a topological groupoid; here the
   action of~\(H\) on the product space is given
   by~\((x,y)\eta = (x\eta,y\eta)\) where \(x,y\in X\)
   and~\(\eta\in H\). The groupoid~\(G\) acts freely and properly
   on~\(X\) from the left. Moreover, this way~\(X\) implements an
   equivalence between~\(G\) and~\(H\).
 \end{example}

 We briefly describe the groupoid structure on \((X\times X)/ H\) and
 its action on~\(X\) mentioned in the last example. Denote the
 equivalence class of \((x,y)\in X\times X\) by \([x,y]\); then define
 the range and source map from \((X\times X) /H \to X/H\) by
 \(r([x,y]) = x H\) and \(s([x,y]) = y H\) respectively; here \(xH\)
 is the \(H\)\nb-orbit of~\(x\). Then \([x,y]\) and \([z,w]\) are
 composable \emph{iff} \(yH = zH\), that is, \(z= yh\) for some
 \(h\in H\). Now the composite of these two arrows is given by
 \([x,y] [z,w] = [x,y] [y\cdot h, w] = [x, w\cdot h^{-1}]\); the
 composite is well-defined due to freeness of the action (see
 \cite[Lemma~2.39]{Williams2019A-Toolkit-Gpd-algebra} for
 details). Finally, the inverse \([x,y]^{-1} = [y,x]\). The unit space
 of the groupoid \(G\defeq (X\times X)/ H\) is identified with
 \(X/H\).

 The momentum map for the action of \((X\times X)/H\) is the quotient
 map \(q\colon X \to X/H\). The action is given by \([x,y] y = x\)
 where \(([x,y],y) \in (X\times X)/H \times_{s, X/H, q} X\);
 see~\cite[Proposition~2.41]{Williams2019A-Toolkit-Gpd-algebra}
 details.

 Recall from Example~\ref{exa:fund-gp-action} that for a Hausdorff,
 path connected, locally path connected and semilocally simply
 connected space~\(X\), the deck transformation action of~\(\FGp(X)\)
 on~\(\tilde{X}\) is free and proper. Therefore, using
 Example~\ref{exa:equi-gpd} we notice that the quotient
 space~\((\tilde{X}\times \tilde{X})/ \FGp(X)\) is a groupoid
 equivalent to~\(\FGp(X)\) where the equivalence is implemented by the
 simply connected covering space~\(\tilde{X}\). But Example~2.3
 in~\cite{Muhly-Renault-Williams1987Gpd-equivalence} already shows
 that~\(\FGp(X)\) is equivalent to~\(\FGd(X)\) via the simply
 connected covering space. Then~\cite[Lemma
 2.38]{Williams2019A-Toolkit-Gpd-algebra} implies
 that~\((\tilde{X}\times \tilde{X})/ \FGp(X) \iso \FGd(X)\) as
 topological groupoids. This result is summarised as follows:

 \begin{proposition}
   \label{prop:quotient-description-of-fund-gp}
   Let \(X\) be Hausdorff, path connected, locally path connected and
   semilocally simply connected space; and let \(\tilde{X}\) be its
   simply connected cover. Let \(G\) be the topological
   groupoid~\((\tilde{X} \times \tilde{X})/ \FGp(X)\) associated with
   the free an proper action of \(\FGp(X)\) on~\(\tilde{X}\) as in
   Example~\ref{exa:equi-gpd}. Then~\(G\) is isomorphic to the
   topological fundamental groupoid of~\(X\).
 \end{proposition}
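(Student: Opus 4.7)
The plan is to exhibit both groupoids as equivalent to the discrete group~\(\FGp(X)\) via the \emph{same} linking space~\(\tilde X\), and then invoke the standard uniqueness principle that two groupoids equivalent through a common linking space are isomorphic as topological groupoids. This triangle of equivalences is essentially the content of the discussion preceding the proposition; the proof will just assemble it carefully.

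First, I would verify the hypothesis needed for Example~\ref{exa:equi-gpd}, namely that the right action of~\(\FGp(X)\) on~\(\tilde X\) by deck transformations is free and proper. Freeness is a standard covering-space fact, while properness is exactly Example~\ref{exa:fund-gp-action}, which uses Hausdorffness of~\(X\) crucially. Applying Example~\ref{exa:equi-gpd} then gives the topological groupoid~\(G = (\tilde X \times \tilde X)/\FGp(X)\) together with an explicit \(G\)\nb-\(\FGp(X)\)\nb-equivalence structure on~\(\tilde X\), in which the left \(G\)\nb-action is~\([x,y]\cdot y = x\) and the right \(\FGp(X)\)\nb-action is by deck transformations.

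Second, I would recall from~\cite[Example~2.3]{Muhly-Renault-Williams1987Gpd-equivalence} that the simply connected covering space~\(\tilde X\) simultaneously implements an equivalence between~\(\FGd(X)\) and~\(\FGp(X)\); here~\(\tilde X\) is viewed as the fibre of the source map of~\(\FGd(X)\) over a chosen basepoint via Corollary~\ref{cor:subspace-top-on:fibre-UC-top}, the left \(\FGd(X)\)\nb-action is given by path concatenation, and the right \(\FGp(X)\)\nb-action is again by deck transformations. In particular, the right action of~\(\FGp(X)\) in this equivalence is precisely the one from the previous step.

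Finally, since both equivalences share the same right \(\FGp(X)\)\nb-action on the same space~\(\tilde X\), an application of~\cite[Lemma~2.38]{Williams2019A-Toolkit-Gpd-algebra} yields a canonical isomorphism~\(G \iso \FGd(X)\) of topological groupoids. The main subtlety will be bookkeeping with the left-versus-right conventions in the two equivalences and checking that the set-theoretic bijection supplied by the general lemma is continuous and open; these properties follow from the openness and continuity of the quotient map~\(\tilde X \times \tilde X \to G\) (a consequence of properness of the diagonal \(\FGp(X)\)\nb-action noted after Example~\ref{exa:fund-gp-action}), combined with the local structure of~\(\FGd(X)\) recorded in Corollary~\ref{cor:subspace-top-on:fibre-UC-top} and Observation~\ref{obs:dynamics-map-loc-homeo}. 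At the level of arrows the isomorphism reads~\([x,y]\mapsto [\gamma_x]\cdot [\gamma_y]^{-1}\), where~\([\gamma_z]\) is the element of the fibre of~\(\FGd(X)\) over the chosen basepoint corresponding to~\(z\in\tilde X\).
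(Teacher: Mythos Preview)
Your proposal is correct and follows essentially the same route as the paper: the argument in the paragraph immediately preceding the proposition verifies freeness and properness via Example~\ref{exa:fund-gp-action}, builds the equivalence \(G\sim\FGp(X)\) from Example~\ref{exa:equi-gpd}, quotes~\cite[Example~2.3]{Muhly-Renault-Williams1987Gpd-equivalence} for the equivalence \(\FGd(X)\sim\FGp(X)\) through the same linking space~\(\tilde X\), and then invokes~\cite[Lemma~2.38]{Williams2019A-Toolkit-Gpd-algebra} to conclude. Your additional remarks on left/right bookkeeping and the explicit formula for the isomorphism are welcome elaborations, but the skeleton of the proof is identical.
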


 \subsection[The point-set topology]{The point-set topology}
 \label{sec:prop-topol-fg}

 Massey~\cite[Ch.5,\S12]{Massey1977Alg-Top-Intro-reprint} discusses
 the point-set topology on the covering space in connection with that
 of the base space. He shows or gives it as an exercise to prove that
 if a locally path connected, semilocally simply connected space has
 any of the following properties, then its simply connected cover too
 has the property: second countability, Hausdorffness, regularity,
 complete regularity, local compactness, being a separable metric
 space. Along the similar lines, we study the relation between the
 point-set topologies of~\(X,\tilde{X}, \PathS{X}\) and the
 fundamental groupoid~\(\FGd(X)\). Our main goal is to investigate
 when the groupoid is Hausdorff or locally compact or second
 countable. Before we start proving the main propositions, here is a
 lemma:

 \begin{lemma}
   \label{lem:closed-equi-rel}
   Assume \(X\) is a topological space with an equivalence
   relation~\(\sim\) having open quotient map~\(q\colon X\to
   X/\sim\). Then \(X/\sim\) is Hausdorff iff
   \(\sim\subseteq X\times X\) is a closed subset.
 \end{lemma}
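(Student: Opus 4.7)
The plan is to prove both implications directly using the standard characterisation of Hausdorffness via the closedness of the diagonal, together with the openness of the quotient map.

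For the forward direction, I would assume $X/\sim$ is Hausdorff and consider the continuous map $q\times q\colon X\times X \to X/\sim \times X/\sim$. Since $X/\sim$ is Hausdorff, its diagonal $\Delta_{X/\sim}$ is closed in $X/\sim \times X/\sim$. The key observation is that $\sim$, as a subset of $X\times X$, is exactly $(q\times q)^{-1}(\Delta_{X/\sim})$, because $(x,y)\in \sim$ if and only if $q(x)=q(y)$. Hence $\sim$ is closed. This direction does not use the openness of $q$; it is a general fact.

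For the converse, assume $\sim$ is closed in $X\times X$ and take two distinct points $[x],[y]\in X/\sim$. Then $(x,y)\notin \sim$, so by the closedness of $\sim$ there exist open neighbourhoods $U$ of $x$ and $V$ of $y$ in $X$ with $(U\times V)\cap \sim = \emptyset$; this says explicitly that no point of $U$ is equivalent to any point of $V$. Now I use the hypothesis that $q$ is open: the sets $q(U)$ and $q(V)$ are open neighbourhoods of $[x]$ and $[y]$ in $X/\sim$. They are disjoint, for if some $[z]\in q(U)\cap q(V)$, then there would be $u\in U$ and $v\in V$ with $u\sim z\sim v$, i.e.\ $u\sim v$, contradicting $(U\times V)\cap \sim = \emptyset$. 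Hence $X/\sim$ is Hausdorff.

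The only subtle step is the converse, where the openness of $q$ is essential; without it one could only conclude separation in $X$ between the fibres, not in $X/\sim$. The forward implication is routine and does not need openness of $q$. No obstacle is expected beyond carefully packaging the standard argument that closedness of the diagonal corresponds to Hausdorffness, transferred to the graph of the equivalence relation via the open quotient.
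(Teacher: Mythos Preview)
Your proof is correct and follows essentially the same approach as the paper. The forward direction is identical; for the converse, the paper argues globally that \(q\times q\) is open and maps \((X\times X)\setminus\sim\) onto \((Y\times Y)\setminus\mathrm{dia}(Y)\), whereas you unpack this same argument pointwise to produce separating neighbourhoods---the difference is purely cosmetic.
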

 \begin{proof}
   Write \(Y\) for \(X/\sim\). Let
   \(q\times q: X\times X\to Y \times Y\) be the continuous mapping
   \((x,y)\mapsto (q(x), q(y))\) where \(x,y\in X\).  If the diagonal
   \(\mathrm{dia}(Y)\subseteq Y \times Y\) is closed, then
   \(\sim = (q\times q)\inverse(\mathrm{dia}(Y))\) is an obviously
   closed subspace of~\(X\times X\).

   For the converse, note that~\(q\times q\) is open map since~\(q\)
   is open. Now assume that \(\sim\) is a closed subspace
   of~\(X\times X\). Then note that
   \[
     (q\times q)((X\times X)- \sim) = (Y\times Y)- \mathrm{dia}(Y).
   \]
   The left hand side, and hence the right side, of above equation is
   an open set. Thus the diagonal
   \(\mathrm{dia}(Y)\subseteq Y\times Y\) is closed.
 \end{proof}

 \begin{proposition}
   \label{prop:Haus-FGd}
   Let \(X\) be a locally path connected and semilocally simply
   connected space. Let \(\sim\) be the equivalence relation of path
   homotopy on~\(\PathS{X}\). Consider the following assertions.
   \begin{enumerate}
   \item\label{it:X-Hd} \(X\) is Hausdorff.
   \item\label{it:X-tilde-Hd} \(\tilde{X}\) is Hausdorff.
   \item\label{it:PX-Hd} \(\PathS{X}\) is Hausdorff.
   \item\label{it:GdX-Hd} \(\FGd(X)\) is Hausdorff.
   \item\label{it:rel-Hd} \(\sim\subseteq \PathS{X}\times \PathS{X}\)
     is a closed subspace.
   \end{enumerate}
   Then~(\ref{it:X-Hd}),~(\ref{it:PX-Hd}),~(\ref{it:GdX-Hd})
   and~(\ref{it:rel-Hd}) are equivalent, and
   (\ref{it:X-Hd})\(\implies\)(\ref{it:X-tilde-Hd}).
 \end{proposition}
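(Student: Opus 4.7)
The plan is to organise the proof around the five claims, using the just-proved Lemma~\ref{lem:closed-equi-rel} and Corollary~\ref{cor:quotient-from-PX-to-PiX-open} as pivots. First, (\ref{it:X-Hd})$\iff$(\ref{it:PX-Hd}) is a direct instance of Property~\ref{prop:CO-Hausdorff} applied to $\PathS{X}=\Cont(\UInt,X)$. Second, since the quotient map $q\colon \PathS{X}\to \FGd(X)$ is open by Corollary~\ref{cor:quotient-from-PX-to-PiX-open}, Lemma~\ref{lem:closed-equi-rel} gives (\ref{it:GdX-Hd})$\iff$(\ref{it:rel-Hd}) immediately. Third, Corollary~\ref{cor:subspace-top-on:fibre-UC-top}(1) realises $X$ as the subspace $\FGd(X)^{(0)}\subseteq\FGd(X)$, whence (\ref{it:GdX-Hd})$\implies$(\ref{it:X-Hd}) by passage to subspaces. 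These three observations already give one direction of every equivalence.

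The main remaining step is (\ref{it:X-Hd})$\implies$(\ref{it:GdX-Hd}); I would argue directly with the UC basis. Given distinct $[\alpha],[\beta]\in\FGd(X)$, split into cases by whether $(r\times s)([\alpha])=(r\times s)([\beta])$. If the two points of $X\times X$ differ, use Hausdorffness of $X\times X$ to choose path connected relatively inessential neighbourhoods $U\ni\alpha(1),\,V\ni\alpha(0)$ and $U'\ni\beta(1),\,V'\ni\beta(0)$ with $(U\times V)\cap(U'\times V')=\emptyset$; the basic UC sets $N([\alpha],U,V)$ and $N([\beta],U',V')$ are then disjoint since their images under the local homeomorphism $r\times s$ of Observation~\ref{obs:dynamics-map-loc-homeo} are. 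If the two points of $X\times X$ coincide, take \emph{any} path connected relatively inessential $U\ni\alpha(1),\,V\ni\alpha(0)$ and claim $N([\alpha],U,V)\cap N([\beta],U,V)=\emptyset$; an element of the intersection would produce an identity
\[
[\delta\oblong\alpha\oblong\theta]=[\delta'\oblong\beta\oblong\theta'],
\]
with $\delta,\delta'\in\PathS{U}$ and $\theta,\theta'\in\PathS{V}$, which rearranges to express $[\alpha][\beta]\inverse$ as a product of a loop in $U$ at $\alpha(1)$ and a loop in $V$ at $\alpha(0)$. Both loops are null-homotopic in $X$ because $U$ and $V$ are relatively inessential, forcing $[\alpha]=[\beta]$, a contradiction.

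Finally, (\ref{it:X-Hd})$\implies$(\ref{it:X-tilde-Hd}) is a standard covering-space argument: distinct $\tilde x_1,\tilde x_2\in\tilde X$ are separated either via the covering map $p\colon\tilde X\to X$ and Hausdorffness of $X$, or, when $p(\tilde x_1)=p(\tilde x_2)$, by two distinct sheets of an evenly covered neighbourhood of that common image. I expect the main obstacle to be making the `same image' sub-case of (\ref{it:X-Hd})$\implies$(\ref{it:GdX-Hd}) genuinely watertight, since one has to verify that the two basic UC sets do not accidentally overlap at a third homotopy class; the relative inessentiality of $U$ and $V$ is exactly what prevents this. A cleaner alternative would be to invoke Proposition~\ref{prop:quotient-description-of-fund-gp} together with the fact that the quotient of a Hausdorff space by a free and proper action is Hausdorff, but this route needs (\ref{it:X-tilde-Hd}) in hand first, which is an additional reason to prove (\ref{it:X-Hd})$\implies$(\ref{it:X-tilde-Hd}) in parallel.
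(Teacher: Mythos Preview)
Your proof is correct and follows essentially the same route as the paper: the equivalences (\ref{it:X-Hd})$\iff$(\ref{it:PX-Hd}), (\ref{it:GdX-Hd})$\iff$(\ref{it:rel-Hd}), and (\ref{it:GdX-Hd})$\Rightarrow$(\ref{it:X-Hd}) are handled identically, and the core step (\ref{it:X-Hd})$\Rightarrow$(\ref{it:GdX-Hd}) uses the same two-case UC-basis argument (different endpoints versus same endpoints, with relative inessentiality killing the loops in the latter case). The only minor differences are that the paper deduces (\ref{it:X-Hd})$\Rightarrow$(\ref{it:X-tilde-Hd}) by viewing $\tilde X$ as a subspace of the now-Hausdorff $\FGd(X)$ rather than via the direct evenly-covered-neighbourhood argument you sketch, and that in your ``same endpoints'' case the rearranged identity is $[\alpha]=[\text{loop in }U]\cdot[\beta]\cdot[\text{loop in }V]$ rather than an expression for $[\alpha][\beta]^{-1}$ as a product of two loops---but the conclusion is unaffected since both loops are null-homotopic.
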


 \begin{proof}
   \noindent (\ref{it:X-Hd})\(\iff\)(\ref{it:PX-Hd}) follows from
   Property~\ref{prop:CO-Hausdorff}.

   \noindent (\ref{it:GdX-Hd})\(\iff\)(\ref{it:rel-Hd}): Since the
   quotient map \(\PathS{X}\to \FGd(X)\) is open, this follows
   from~Lemma~\ref{lem:closed-equi-rel}.

   \noindent (\ref{it:GdX-Hd})\(\implies\) (\ref{it:X-Hd}), as
   \(X, \tilde{X}\subseteq \FGd(X)\) are
   subspaces~Corollary~\ref{cor:subspace-top-on:fibre-UC-top}.

   \noindent (\ref{it:X-Hd})\(\implies\)(\ref{it:GdX-Hd}): Suppose
   \(X\) is Hausdorff space and two distinct points
   \([\alpha], [\beta]\in \FGd(X)\) are given. Then there are two
   cases: one of the endpoints of~\(\alpha\) and~\(\beta\) differ, or
   both these paths have same initial points and same terminal
   points. In the first case, first assume
   that~\(\alpha (0) \neq \beta(0)\). Choose a neighbourhoods
   \(\alpha(0)\in W\) and~\(\beta(0)\in W'\) in~\(X\) with
   \(W\cap W'= \emptyset\). And choose any neighbourhoods
   \(\alpha(1)\in V\) and~\(\beta(1)\in V'\).  Then clearly
   \(N([\alpha],V, W) \cap N([\beta], V', W') =\emptyset\).  A similar
   argument works if~\(\alpha (1) \neq \beta(1)\).

   Consider the second case, namely, \(\alpha(0) =\beta(0)\) and
   \(\alpha(1) =\beta(1)\). Since \(X\) is semilocally simply
   connected choose relatively inessential neighbourhoods \(U\) of
   \(\alpha(0)\) and \(V\) of \(\alpha(1)\). Then
   \(N([\alpha],V,U) \cap N([\beta], V,U) = \emptyset\). If this
   intersection is not empty, we get a contradiction as follows:
   assume that \([\gamma] \in N([\alpha],V,U) \cap N([\beta],
   V,U)\). Then
   \([\gamma] = [\eta \oblong \alpha \oblong \theta] = [\eta' \oblong
   \beta \oblong \theta']\) for some \(\eta, \eta' \in \PathS{V}\) and
   \(\theta, \theta'\in \PathS{U}\) with
   \[
     \gamma(0) =\theta(0) =\theta'(0),\quad \gamma(1)
     =\eta(1)=\eta'(1).
   \]
   Moreover, since~\(\alpha\) and~\(\beta\) have the same endpoints,
   we also have that \(\theta(1) =\theta'(1)\) and
   \(\eta(0) =\eta'(0)\). Therefore,
   \begin{equation}\label{equ:Haus-FGd}
     [\alpha] = [\eta^{-1} \oblong \eta' \oblong \beta \oblong \theta' \oblong \theta^{-1}] = [\eta^{-1} \oblong \eta'] [\beta] [\theta' \oblong \theta^{-1}] =[\beta].
   \end{equation}
   The last equality holds because \(U\) and \(V\) are relatively
   inessential neighbourhoods. Equation~\eqref{equ:Haus-FGd}
   contradicts that~\([\alpha] \) and \([\beta]\) are distinct.

   \noindent Finally, for the proof of
   (\ref{it:X-Hd})\(\implies\)(\ref{it:X-tilde-Hd}),
   consider~\(\tilde{X}\) as a subspace of~\(\FGd(X)\). Then the same
   proof as in the first case above works.
 \end{proof}

 Recall from Section~\ref{sec:groupoids-prel} that for us locally
 compact spaces are not necessarily Hausdorff.

 \begin{proposition}
   \label{prop:local-cpt}
   Consider the following statements about a path connected, locally
   path connected and semilocally simply connected space~\(X\).
   \begin{enumerate}
   \item\label{it:X-lc} \(X\) is locally compact.
   \item\label{it:X-tilde-lc} \(\tilde{X}\) is locally compact.
   \item\label{it:FGX-lc} \(\FGd(X)\) is locally compact.
   \end{enumerate}
   Then,~(\ref{it:X-lc})\(\iff\)(\ref{it:X-tilde-lc}),
   and~(\ref{it:FGX-lc})\(\implies\)(\ref{it:X-tilde-lc}),(\ref{it:X-lc}).
   If~\(X\) is also Hausdorff, then all are equivalent.
 \end{proposition}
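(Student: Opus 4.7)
The plan is to split the proof into three parts: the equivalence $(\ref{it:X-lc}) \iff (\ref{it:X-tilde-lc})$ via the covering map, the implication $(\ref{it:FGX-lc}) \implies (\ref{it:X-lc})$ via the local homeomorphism $r\times s$, and, under Hausdorffness, the converse $(\ref{it:X-lc}) \implies (\ref{it:FGX-lc})$ via the quotient description.

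For $(\ref{it:X-lc}) \iff (\ref{it:X-tilde-lc})$, the universal covering map $p\colon \tilde{X}\to X$ is a surjective local homeomorphism, so it suffices to check the folklore that local compactness is transferred in both directions along surjective local homeomorphisms. For one direction, pick $x\in X$, lift to $\tilde{x}\in \tilde{X}$, take a neighbourhood $V$ of $\tilde{x}$ mapped homeomorphically onto the open set $p(V)\subseteq X$, and use local compactness of $\tilde{X}$ together with Lemma~\ref{lem:loc-cpt} to produce a compact neighbourhood of $\tilde{x}$ sitting inside $V$; its image is a compact neighbourhood of $x$. The converse direction is symmetric, working inside a chart of $p$.

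For $(\ref{it:FGX-lc}) \implies (\ref{it:X-lc})$, I would use Observation~\ref{obs:dynamics-map-loc-homeo}, which tells us that $r\times s\colon \FGd(X)\to X\times X$ is a local homeomorphism, and surjective since $X$ is path connected. By the same transfer argument as above, $X\times X$ is locally compact. To descend to $X$, given $x\in X$, pick a compact neighbourhood $K$ of $(x,x)$ together with an open box $U\times W\subseteq K$ containing $(x,x)$; then the first-coordinate projection $\mathrm{pr}_1(K)$ is compact, contains the open set $U$, and hence is a compact neighbourhood of $x$. The equivalence $(\ref{it:X-lc}) \iff (\ref{it:X-tilde-lc})$ then yields $(\ref{it:X-tilde-lc})$ as well.

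For the converse under Hausdorffness, the idea is to invoke Proposition~\ref{prop:quotient-description-of-fund-gp} to identify $\FGd(X)$ with $(\tilde{X}\times \tilde{X})/\FGp(X)$. Assuming $(\ref{it:X-lc})$ together with Hausdorffness of $X$, Proposition~\ref{prop:Haus-FGd} and the already-proven $(\ref{it:X-lc}) \iff (\ref{it:X-tilde-lc})$ imply that $\tilde{X}$, and thus $\tilde{X}\times \tilde{X}$, is locally compact Hausdorff. The diagonal action of $\FGp(X)$ inherits freeness and properness from Example~\ref{exa:fund-gp-action}, so the remark recorded in Section~\ref{sec:groupoids-prel}---that the quotient of a locally compact space by a proper action with open range map is again locally compact---produces local compactness of the quotient, and hence of $\FGd(X)$. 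The main obstacle is bookkeeping with the paper's convention that ``locally compact'' does not entail Hausdorffness: the transfer arguments for $(\ref{it:X-lc}) \iff (\ref{it:X-tilde-lc})$ and $(\ref{it:FGX-lc}) \implies (\ref{it:X-lc})$ must run without any global Hausdorff assumption, and here Lemma~\ref{lem:loc-cpt} is indispensable since it only demands a pointwise compact neighbourhood rather than a Hausdorff ambient space. The Hausdorff hypothesis in the converse is genuinely needed, as Proposition~\ref{prop:quotient-description-of-fund-gp} itself requires it.
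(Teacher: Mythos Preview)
Your arguments for $(\ref{it:X-lc})\iff(\ref{it:X-tilde-lc})$ and for $(\ref{it:X-lc})\implies(\ref{it:FGX-lc})$ under Hausdorffness are essentially the paper's: transfer along the covering local homeomorphism using Lemma~\ref{lem:loc-cpt}, and the quotient description of Proposition~\ref{prop:quotient-description-of-fund-gp} combined with properness of the diagonal action.

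For $(\ref{it:FGX-lc})\implies(\ref{it:X-lc}),(\ref{it:X-tilde-lc})$ you take a genuinely different route. The paper argues that local compactness of~$\FGd(X)$ forces it to be~$T_1$, so $\tilde{X}=r^{-1}([e_x])$ is a \emph{closed} subspace and hence locally compact by the closed-subspace stability cited from Tu; then~(\ref{it:X-lc}) follows from the already-established equivalence. You instead push local compactness forward along the surjective local homeomorphism $r\times s\colon \FGd(X)\to X\times X$ of Observation~\ref{obs:dynamics-map-loc-homeo} and then descend to~$X$. This is a perfectly good alternative and has the pleasant feature of not singling out a basepoint.

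There is, however, a small gap in your descent step under the paper's conventions. You assert that $\mathrm{pr}_1(K)$ is compact; but here ``compact'' means quasi-compact \emph{and Hausdorff}, and while $\mathrm{pr}_1(K)$ is certainly quasi-compact, there is no reason for it to inherit Hausdorffness as a subspace of~$X$ when~$X$ is not yet known to be Hausdorff. The fix is easy: since $X\times X$ is locally compact it is~$T_1$, so the slice $X\times\{x\}$ is closed in~$X\times X$; hence $K\cap(X\times\{x\})$ is closed in the compact Hausdorff set~$K$ and therefore compact. Its homeomorphic image under~$\mathrm{pr}_1$ contains the open set~$U$ and is the desired compact neighbourhood of~$x$. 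With this adjustment your argument goes through.
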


 \noindent As discussed after Property~\ref{prop:CO-paracpt}, local
 compactness of~\(\PathS{X}\) is not a practical expectation.

 \begin{proof}[Proof of Proposition~\ref{prop:local-cpt}]
   \noindent (\ref{it:X-lc})\(\iff\)(\ref{it:X-tilde-lc}): This
   follows because~\(X\) and \(\tilde{X}\) are locally homeomorphic
   via the covering map. We sketch the proof for the sake of clarity:
   suppose~\(p\colon \tilde{X} \to X\) is the covering map. Assume
   that~\(\tilde{X}\) is locally compact, and~\(x\in X\) is
   given. Choose a point~\(\tilde{x}\) in the preimage of~\(x\). Let
   \(U\subseteq \tilde{X}\) be a neighbourhood such that
   \(p(U)\subseteq X\) is open and~\(p\colon U\to p(U)\) is a
   homeomorphism. Get a neighbourhood~\(V\) of~\(\tilde{x}\) such that
   \(\overline{V}\) is compact and \(\subseteq U\) using
   Lemma~\ref{lem:loc-cpt}. Then \(p(\overline{V})\) is a compact
   neighbourhood of~\(x\). The converse is proved similarly.
  
   \noindent (\ref{it:FGX-lc})\(\implies\)(\ref{it:X-tilde-lc}): A
   locally compact space in our sense is~\(T_1\). Therefore, the
   covering space \(\tilde{X} \subseteq \FGd(X)\) is a closed subspace
   (remark after
   Corollary~\ref{cor:subspace-top-on:fibre-UC-top}). Being a closed
   subspace of a locally compact space
   (\cite[Proposition~1.2]{Tu2004NonHausdorff-gpd-proper-actions-and-K}),
   \(\tilde{X}\) is closed.

   Since~(\ref{it:X-lc}) and~(\ref{it:X-tilde-lc}) are proved to be
   equivalent, (\ref{it:FGX-lc})\(\implies\)(\ref{it:X-lc}).

   Finally, assume that \(X\) Hausdorff. Then~\(\tilde{X}\) is
   Hausdorff due to Proposition~\ref{prop:Haus-FGd}. Therefore,
   \(\tilde{X}\times \tilde{X}\) is a locally compact Hausdorff
   space. Now Proposition~\ref{prop:quotient-description-of-fund-gp}
   implies that~\(\FGd(X)\) is locally compact Hausdorff being
   quotient space of a proper action. This proves
   (\ref{it:X-lc})\(\implies\)(\ref{it:FGX-lc}).
 \end{proof}

 Recall from our convention, Section~\ref{sec:groupoids-prel}, that
 paracompact and second countable spaces are Hausdorff.

 \begin{proposition}
   \label{prop:sec-cble}
   Let \(X\) be locally path connected and semilocally simply
   connected space. Then following are equivalent:
   \begin{enumerate}
   \item\label{it:X-sc} \(X\) is second countable.
   \item\label{it:X-tilde-sc} \(\tilde{X}\) is second countable.
   \item\label{it:PX-sc} \(\PathS{X}\) is second countable.
   \item\label{it:FGX-sc} \(\FGd(X)\) is second countable.
   \end{enumerate}
 \end{proposition}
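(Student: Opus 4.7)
The proof will be entirely by assembling results already established in the paper; no new ideas are required, only a careful cycle of implications.

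My plan is to establish the cycle $(\ref{it:X-sc})\Leftrightarrow(\ref{it:PX-sc})\Rightarrow(\ref{it:FGX-sc})\Rightarrow(\ref{it:X-sc})$ first, and then insert $(\ref{it:X-tilde-sc})$ into it. The backbone comes from three earlier facts: Property~\ref{prop:CO-sec-ctble} (second countability passes between $X$ and $\PathS{X}$ because $\UInt$ is locally compact Hausdorff and second countable); Corollary~\ref{cor:quotient-from-PX-to-PiX-open} (the path-homotopy quotient $q\colon \PathS{X}\to \FGd(X)$ is open); and Corollary~\ref{cor:subspace-top-on:fibre-UC-top} (both $X$ and $\tilde X$ sit inside $\FGd(X)$ as subspaces, via the constant-path embedding and as a range-fibre respectively).

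First I will record the elementary fact that an open continuous surjection $f\colon Z\to W$ maps countable bases to countable bases: if $\{B_n\}$ is a basis of $Z$, then for any open $U\subseteq W$ and $y\in U$ one chooses $z\in f^{-1}(y)$ and $B_n$ with $z\in B_n\subseteq f^{-1}(U)$, so $y\in f(B_n)\subseteq U$. Applying this to $q\colon \PathS{X}\to\FGd(X)$ gives $(\ref{it:PX-sc})\Rightarrow(\ref{it:FGX-sc})$, and applying it to the covering map $p\colon \tilde X\to X$ (which is a local homeomorphism, hence open) gives $(\ref{it:X-tilde-sc})\Rightarrow(\ref{it:X-sc})$.

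Next, $(\ref{it:X-sc})\Leftrightarrow(\ref{it:PX-sc})$ is literally the content of Property~\ref{prop:CO-sec-ctble} applied to $Y=\UInt$, and is exactly what Remark~\ref{rem:countable-basis-of-CO-2} records. For $(\ref{it:FGX-sc})\Rightarrow(\ref{it:X-sc})$ and $(\ref{it:FGX-sc})\Rightarrow(\ref{it:X-tilde-sc})$, I will invoke Corollary~\ref{cor:subspace-top-on:fibre-UC-top}: the constant-path map identifies $X$ with the space of units of $\FGd(X)$ as a topological subspace, and for any chosen basepoint $x\in X$ the source-fibre $\FGd(X)^{[e_x]}$ is a subspace of $\FGd(X)$ homeomorphic to $\tilde X$. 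A subspace of a second countable space is second countable, so both $X$ and $\tilde X$ inherit the property from $\FGd(X)$.

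Putting the implications in order, the cleanest presentation is the chain
\[(\ref{it:X-sc})\;\overset{\mathrm{Prop.~\ref{prop:CO-sec-ctble}}}{\Longleftrightarrow}\;(\ref{it:PX-sc})\;\overset{\mathrm{Cor.~\ref{cor:quotient-from-PX-to-PiX-open}}}{\Longrightarrow}\;(\ref{it:FGX-sc})\;\overset{\mathrm{Cor.~\ref{cor:subspace-top-on:fibre-UC-top}}}{\Longrightarrow}\;(\ref{it:X-tilde-sc})\;\overset{p\text{ open}}{\Longrightarrow}\;(\ref{it:X-sc}),\]
which closes the loop and proves all four are equivalent. There is no real obstacle; the only place where one might slip is conflating ``open continuous surjection preserves second countability'' with the (false) claim that arbitrary continuous surjections preserve it, so I will state and briefly justify the open-surjection lemma explicitly rather than leave it implicit.
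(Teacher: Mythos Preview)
Your argument is correct and essentially parallels the paper's proof for three of the four links: the equivalence $(\ref{it:X-sc})\Leftrightarrow(\ref{it:PX-sc})$ via Property~\ref{prop:CO-sec-ctble}, the passage $(\ref{it:PX-sc})\Rightarrow(\ref{it:FGX-sc})$ via the open quotient map of Corollary~\ref{cor:quotient-from-PX-to-PiX-open}, and the deduction of $(\ref{it:X-sc})$ and $(\ref{it:X-tilde-sc})$ from $(\ref{it:FGX-sc})$ via the subspace identifications in Corollary~\ref{cor:subspace-top-on:fibre-UC-top}.

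The one genuine difference is how you hook $(\ref{it:X-tilde-sc})$ back into the cycle. The paper proves $(\ref{it:X-tilde-sc})\Rightarrow(\ref{it:FGX-sc})$ by invoking Proposition~\ref{prop:quotient-description-of-fund-gp}, which realises $\FGd(X)$ as the proper quotient $(\tilde X\times\tilde X)/\FGp(X)$; second countability then descends along that open quotient. You instead close the loop with $(\ref{it:X-tilde-sc})\Rightarrow(\ref{it:X-sc})$ directly, using only that the covering map $p\colon\tilde X\to X$ is an open surjection. Your route is the more elementary one: it avoids the groupoid-equivalence machinery entirely, and in particular it does not rely on the Hausdorff hypothesis on $X$ that Proposition~\ref{prop:quotient-description-of-fund-gp} carries---a point the paper itself flags in the remark immediately following its proof.
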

 \begin{proof}
   \noindent (\ref{it:PX-sc})\(\implies\)(\ref{it:FGX-sc}): Assume
   \(\PathS{X}\) is second countable. Then the fundamental groupoid is
   second countable as it is image of~\(\PathS{X}\) under the quotient
   map which is open due to
   Corollary~\ref{cor:quotient-from-PX-to-PiX-open}.

   \noindent (\ref{it:FGX-sc})\(\implies\)(\ref{it:X-sc}) or
   (\ref{it:X-tilde-sc}): Since \(X\) and~\(\tilde{X}\) are subspaces
   of~\(\FGd(X)\), they are second countable~\cite[Theorem
   30.2]{Munkress1975Topology-book}.
  
   \noindent (\ref{it:X-sc})\(\iff\)(\ref{it:PX-sc}): Due to
   Property~\ref{prop:CO-sec-ctble}.

  \noindent 
  (\ref{it:X-tilde-sc})\(\implies\)(\ref{it:FGX-sc}): This is a
  consequence of
  Proposition~\ref{prop:quotient-description-of-fund-gp}.
\end{proof}

The proof of the last proposition, except
(\ref{it:X-tilde-sc})\(\implies\)(\ref{it:FGX-sc}), holds even if
second countable spaces are not assumed to be Hausdorff.

Next discussion has some loose ends. Before we proceed, note that the
space of units of a Hausdorff groupoid~\(G\) is a closed subspace
of~\(G\): consider the map \(G\to G\times G\) that sends
\(\gamma\in G\) to \((\gamma, \gamma\inverse)\). The diagonal
in~\(G\times G\) is closed. And~\(\base\) is inverse image of the
diagonal under the above continuous map.

\begin{proposition}
  \label{prop:paracpt}
  Let \(X\) be path connected, locally path connected and semilocally
  simply connected space. Consider the following statements:
  \begin{enumerate}
  \item\label{it:X-pc} \(X\) is paracompact.
  \item\label{it:X-tilde-pc} \(\tilde{X}\) is paracompact.
  \item\label{it:FGX-pc} \(\FGd(X)\) is paracompact.
  \end{enumerate}
  Then
  (\ref{it:FGX-pc})\(\implies\)(\ref{it:X-tilde-pc}),(\ref{it:X-pc}).
  Additionally, if we assume that \(\FGp(X)<\infty\), then
  (\ref{it:X-pc})\(\implies\)(\ref{it:X-tilde-pc}).
\end{proposition}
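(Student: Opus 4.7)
The plan is to treat the two implications separately, using that $X$ and $\tilde X$ sit inside $\FGd(X)$ as subspaces and that paracompactness is inherited both by closed subspaces and by perfect preimages.

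For (\ref{it:FGX-pc}) $\implies$ (\ref{it:X-pc}),(\ref{it:X-tilde-pc}), I would first note that paracompact spaces are Hausdorff in our convention, so $\FGd(X)$ paracompact forces $\FGd(X)$ to be Hausdorff; Proposition~\ref{prop:Haus-FGd} then gives that $X$ and $\tilde X$ are Hausdorff as well. As remarked just before Proposition~\ref{prop:paracpt}, the unit space of a Hausdorff groupoid is closed in the groupoid, so $X\cong \base[\FGd(X)]$ is a closed subspace of $\FGd(X)$. For $\tilde X$, Corollary~\ref{cor:subspace-top-on:fibre-UC-top} and the observation following it identify $\tilde X$ with the fibre $r\inverse(\{[e_x]\})$ of the continuous range map, which is closed in $\FGd(X)$ because $\FGd(X)$ is $T_1$ (this closedness was already used in the proof of Proposition~\ref{prop:local-cpt}). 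Since closed subspaces of paracompact spaces are paracompact, both $X$ and $\tilde X$ are paracompact.

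For (\ref{it:X-pc}) $\implies$ (\ref{it:X-tilde-pc}) under the hypothesis $|\FGp(X)|<\infty$, my plan is to show that the universal covering map $p\colon \tilde X\to X$ is a perfect map (a closed continuous map with compact fibres) and then invoke the classical theorem that perfect preimages of paracompact Hausdorff spaces are paracompact (Michael; cf.\ Engelking's \emph{General Topology}, Theorem~5.1.35). The fibres of $p$ are in bijection with the finite group $\FGp(X)$, hence finite, and compact in the space $\tilde X$, which is Hausdorff by Proposition~\ref{prop:Haus-FGd}. For closedness, given closed $C\subseteq \tilde X$ and $x\in X\setminus p(C)$, I would pick an evenly covered neighbourhood $V$ of $x$ with $p\inverse(V)=V_1\sqcup\cdots\sqcup V_n$; each $p(V_i\cap C)$ is then closed in $V$ since $p|_{V_i}$ is a homeomorphism, and $V\setminus \bigcup_{i=1}^n p(V_i\cap C)$ is an open neighbourhood of $x$ disjoint from $p(C)$, so $p$ is closed. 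The theorem on perfect maps then yields paracompactness of $\tilde X$.

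The crux --- and the reason the hypothesis $|\FGp(X)|<\infty$ is imposed --- is exactly the step that makes $p$ a perfect map: without finiteness of the fibres, $p$ is only a local homeomorphism with discrete (but possibly infinite) fibres, and a general covering map need not transport paracompactness in either direction. If one wishes to sidestep the external appeal to Michael's theorem, one can instead argue directly: choose for each $x\in X$ an evenly covered open neighbourhood $V(x)$ small enough that each of its $n$ sheets lies in some element of a given open cover of $\tilde X$ (possible by finiteness of $p\inverse(x)$), take a locally finite open refinement $\{W_\beta\}$ of $\{V(x)\}_{x\in X}$ using paracompactness of $X$, and lift to the open cover $\{W_\beta^{(i)}\}_{\beta,\,i=1,\ldots,n}$ of $\tilde X$ by the $n$ sheets over each $W_\beta$; local finiteness downstairs combined with the bounded fibre size yields local finiteness upstairs.
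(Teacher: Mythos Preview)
Your proof is correct and follows essentially the same approach as the paper: for (\ref{it:FGX-pc})$\implies$(\ref{it:X-pc}),(\ref{it:X-tilde-pc}) both arguments identify $X$ and $\tilde X$ as closed subspaces of the Hausdorff groupoid $\FGd(X)$ (via the unit space and a range fibre, respectively), and for (\ref{it:X-pc})$\implies$(\ref{it:X-tilde-pc}) both observe that finiteness of $\FGp(X)$ makes the covering map $p$ perfect and then invoke the preservation of paracompactness under perfect preimages (the paper cites Munkres, you cite Engelking/Michael). Your write-up is simply more detailed---you spell out why $p$ is closed and add a direct sheet-lifting alternative---but the strategy is identical.
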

\begin{proof}
  (\ref{it:FGX-pc})\(\implies\)(\ref{it:X-tilde-pc}),(\ref{it:X-pc}):
  Follows because \(X, \tilde{X}\subseteq \FGd(X)\) are closed
  subspaces---\(X\) is closed as it is the space of units, and
  \(\tilde{X}\) is closed for being the inverse image of a point under
  the range map.
  
  \noindent (\ref{it:X-pc})\(\implies\)(\ref{it:X-tilde-pc}): In this
  case the covering map \(p\colon \tilde{X} \to X\) is a perfect
  map. Therefore, \(\tilde{X}\) is paracompact if \(X\) is so,
  see~\cite[Exercise 8, page 260]{Munkress1975Topology-book}.
\end{proof}

As observed in Property~\ref{prop:CO-paracpt}, the path space of a
metric space is paracompact.

Zabrodsky discuses metrisability of~\(\tilde{X}\)
in~\cite{Zabrodsky1964Covering-spaces-of-paracompact-spaces}; he
proves that if~\(X\) is metrisable, then so is~\(\tilde{X}\) in a
\emph{nice} way; depending on if~\(X\) is locally path connected,
local behaviour of these metrics change, see Theorem~2 vs Theorem~3
in~\cite{Zabrodsky1964Covering-spaces-of-paracompact-spaces}.

\medskip

\paragraph{\itshape Acknowledgement:} We are grateful for the
discussions with Prahlad Vaidyanathan and Angshuman Bhattacharya which
lead to this article. We thank Ralf Meyer, Jean Renault and Atreyee
Bhattacharya for fruitful discussions. Special thanks to Dheeraj
Kulkarni for his valuable time, suggestions, proofreading and
Figure~\ref{fig:CO-UC-tops-same}. We thank our funding institutions:
the first author was supported by SERB's grant SRG/2020/001823 and the
second one by CSIR grant~09/1020(0159)/2019-EMR-I.


\begin{bibdiv}
\begin{biblist}

\bib{Biss2002Fundamental-Gpd-Top}{article}{
      author={Biss, Daniel~K.},
       title={The topological fundamental group and generalized covering
  spaces},
        date={2002},
        ISSN={0166-8641},
     journal={Topology Appl.},
      volume={124},
      number={3},
       pages={355\ndash 371},
         url={https://doi.org/10.1016/S0166-8641(01)00247-4},
      review={\MR{1930651}},
}

\bib{Bogley-Sieradski1998Uni-Path-Spaces}{article}{
      author={Bogley, W.~A.},
      author={{S}ieradski, {A}.~{J}.},
       title={{Universal path spaces}},
        date={1998},
}

\bib{Brodskiy-etale2012Cov-Maps-for-loc-path-connected-spaces}{article}{
      author={Brodskiy, N.},
      author={Dydak, J.},
      author={Labuz, B.},
      author={Mitra, A.},
       title={Covering maps for locally path-connected spaces},
        date={2012},
        ISSN={0016-2736},
     journal={Fund. Math.},
      volume={218},
      number={1},
       pages={13\ndash 46},
         url={https://doi.org/10.4064/fm218-1-2},
      review={\MR{2950197}},
}

\bib{Brown-Danesh-1975-Top-FG-1}{article}{
      author={Brown, R.},
      author={Danesh-Naruie, G.},
       title={The fundamental groupoid as a topological groupoid},
        date={1974/75},
        ISSN={0013-0915},
     journal={Proc. Edinburgh Math. Soc. (2)},
      volume={19},
       pages={237\ndash 244},
         url={http://dx.doi.org/10.1017/S0013091500015509},
      review={\MR{0413096}},
}

\bib{Calcut-McCarthy2009Top-Fund-Gp}{article}{
      author={Calcut, Jack~S.},
      author={McCarthy, John~D.},
       title={Discreteness and homogeneity of the topological fundamental
  group},
        date={2009},
        ISSN={0146-4124},
     journal={Topology Proc.},
      volume={34},
       pages={339\ndash 349},
      review={\MR{2533524}},
}

\bib{Dugunji1950Topogilized-Fund-Gp}{article}{
      author={Dugundji, J.},
       title={A topologized fundamental group},
        date={1950},
        ISSN={0027-8424},
     journal={Proc. Nat. Acad. Sci. U. S. A.},
      volume={36},
       pages={141\ndash 143},
         url={https://doi.org/10.1073/pnas.36.2.141},
      review={\MR{0033518}},
}

\bib{Dugunji-Book}{book}{
      author={Dugundji, James},
       title={Topology},
   publisher={Allyn and Bacon, Inc., Boston, Mass.-London-Sydney},
        date={1978},
        ISBN={0-205-00271-4},
        note={Reprinting of the 1966 original, Allyn and Bacon Series in
  Advanced Mathematics},
      review={\MR{0478089}},
}

\bib{Edwards1999CO-topology}{article}{
      author={Edwards, Terrence},
       title={On function spaces with the compact-open topology},
        date={1999},
        ISSN={1171-6096},
     journal={New Zealand J. Math.},
      volume={28},
      number={2},
       pages={185\ndash 192},
      review={\MR{1775062}},
}

\bib{Ehresmann1958Categories-Top-et-categories-diff}{incollection}{
      author={Ehresmann, Charles},
       title={Cat\'{e}gories topologiques et cat\'{e}gories
  diff\'{e}rentiables},
        date={1959},
   booktitle={Colloque {G}\'{e}om. {D}iff. {G}lobale ({B}ruxelles, 1958)},
   publisher={Centre Belge Rech. Math., Louvain},
       pages={137\ndash 150},
      review={\MR{0116360}},
}

\bib{Fabel2006Top-Fundamental-Gpd-dist-iso-homotopy-gps}{article}{
      author={Fabel, Paul},
       title={Topological fundamental groups can distinguish spaces with
  isomorphic homotopy groups},
        date={2006},
        ISSN={0146-4124},
     journal={Topology Proc.},
      volume={30},
      number={1},
       pages={187\ndash 195},
        note={Spring Topology and Dynamical Systems Conference},
      review={\MR{2280667}},
}

\bib{Fabel2007Top-Fund-Gp-of-Metric-Spaces}{article}{
      author={Fabel, Paul},
       title={Metric spaces with discrete topological fundamental group},
        date={2007},
        ISSN={0166-8641},
     journal={Topology Appl.},
      volume={154},
      number={3},
       pages={635\ndash 638},
         url={https://doi.org/10.1016/j.topol.2006.08.004},
      review={\MR{2280908}},
}

\bib{Fabel2011HE-Counterexample}{article}{
      author={Fabel, Paul},
       title={Multiplication is discontinuous in the {H}awaiian earring group
  (with the quotient topology)},
        date={2011},
        ISSN={0239-7269},
     journal={Bull. Pol. Acad. Sci. Math.},
      volume={59},
      number={1},
       pages={77\ndash 83},
         url={https://doi.org/10.4064/ba59-1-9},
      review={\MR{2810974}},
}

\bib{Fischer-Zastrow2007Gen-Uni-Coverings-and-Shape-Gp}{article}{
      author={Fischer, Hanspeter},
      author={Zastrow, Andreas},
       title={Generalized universal covering spaces and the shape group},
        date={2007},
        ISSN={0016-2736},
     journal={Fund. Math.},
      volume={197},
       pages={167\ndash 196},
         url={https://doi.org/10.4064/fm197-0-7},
      review={\MR{2365886}},
}

\bib{Fulton1995Alg-Top-book}{book}{
      author={Fulton, William},
       title={Algebraic topology},
      series={Graduate Texts in Mathematics},
   publisher={Springer-Verlag, New York},
        date={1995},
      volume={153},
        ISBN={0-387-94326-9; 0-387-94327-7},
         url={https://doi.org/10.1007/978-1-4612-4180-5},
        note={A first course},
      review={\MR{1343250}},
}

\bib{Hatcher2002Alg-Top-Book}{book}{
      author={Hatcher, Allen},
       title={Algebraic topology},
   publisher={Cambridge University Press, Cambridge},
        date={2002},
        ISBN={0-521-79160-X; 0-521-79540-0},
      review={\MR{1867354}},
}

\bib{Hilton-Wylie1960Intro-to-Alg-top}{book}{
      author={Hilton, P.~J.},
      author={Wylie, S.},
       title={Homology theory: {A}n introduction to algebraic topology},
   publisher={Cambridge University Press, New York},
        date={1960},
      review={\MR{0115161}},
}

\bib{Holkar-Hossain2023Top-FGd-III}{article}{
      author={Holkar, Rohit~Dilip},
      author={Hossain, Md~Amir},
       title={Topological fundamental groupoid.{III}.},
        date={2023},
        note={Preprint- arXiv:2305.06926},
}

\bib{Holkar-Hossain2023Top-FGd-II}{article}{
      author={Holkar, Rohit~Dilip},
      author={Hossain, Md~Amir},
      author={Kulkarni, Dheeraj},
       title={Topological fundamental groupoid. {II}. {A}n action category of
  the fundamental groupoid},
        date={2023},
        note={Preprint- arXiv:2305.04668},
}

\bib{Hurewicz1935Homotopie-Homologie-Und-Lokaler-Zusammenang}{article}{
      author={Hurewicz, {W}itold},
       title={Homotopie, homologie und lokaler zuzammenhang},
        date={1935},
     journal={{F}und. {M}ath.},
      volume={25},
       pages={467\ndash 485},
}

\bib{Khalkhali2013Basic-NCG-Book}{book}{
      author={Khalkhali, Masoud},
       title={Basic noncommutative geometry},
     edition={Second},
      series={EMS Series of Lectures in Mathematics},
   publisher={European Mathematical Society (EMS), Z\"{u}rich},
        date={2013},
        ISBN={978-3-03719-128-6},
         url={https://doi.org/10.4171/128},
      review={\MR{3134494}},
}

\bib{Lee2011Intro-to-top-manifolds-book}{book}{
      author={Lee, John~M.},
       title={Introduction to topological manifolds},
     edition={Second},
      series={Graduate Texts in Mathematics},
   publisher={Springer, New York},
        date={2011},
      volume={202},
        ISBN={978-1-4419-7939-1},
         url={https://doi.org/10.1007/978-1-4419-7940-7},
      review={\MR{2766102}},
}

\bib{Lee2012Intro-to-smooth-manifolds-book}{book}{
      author={Lee, John~M.},
       title={Introduction to smooth manifolds},
     edition={Second},
      series={Graduate Texts in Mathematics},
   publisher={Springer, New York},
        date={2013},
      volume={218},
        ISBN={978-1-4419-9981-8},
      review={\MR{2954043}},
}

\bib{Massey1977Alg-Top-Intro-reprint}{book}{
      author={Massey, William~S.},
       title={Algebraic topology: an introduction},
      series={Graduate Texts in Mathematics, Vol. 56},
   publisher={Springer-Verlag, New York-Heidelberg},
        date={1977},
        note={Reprint of the 1967 edition},
      review={\MR{0448331}},
}

\bib{Massey1991Basic-Course-in-Alg-Top}{book}{
      author={Massey, William~S.},
       title={A basic course in algebraic topology},
      series={Graduate Texts in Mathematics},
   publisher={Springer-Verlag, New York},
        date={1991},
      volume={127},
        ISBN={0-387-97430-X},
      review={\MR{1095046}},
}

\bib{Muhly1999Coordinates}{inproceedings}{
      author={Muhly, Paul},
       title={Coordinates in operator algebra},
        date={1999},
   booktitle={Cmbs conference lecture notes (texas christian university,
  1990)},
}

\bib{Muhly-Renault-Williams1987Gpd-equivalence}{article}{
      author={Muhly, Paul~S.},
      author={Renault, Jean~N.},
      author={Williams, Dana~P.},
       title={Equivalence and isomorphism for groupoid {$C^\ast$}-algebras},
        date={1987},
        ISSN={0379-4024},
     journal={J. Operator Theory},
      volume={17},
      number={1},
       pages={3\ndash 22},
      review={\MR{873460 (88h:46123)}},
}

\bib{Munkress1975Topology-book}{book}{
      author={Munkres, James~R.},
       title={Topology: a first course},
   publisher={Prentice-Hall, Inc., Englewood Cliffs, N.J.},
        date={1975},
      review={\MR{0464128 (57 \#4063)}},
}

\bib{Neshveyev2013KMS-states}{article}{
      author={Neshveyev, Sergey},
       title={K{MS} states on the {$C^\ast$}-algebras of non-principal
  groupoids},
        date={2013},
        ISSN={0379-4024},
     journal={J. Operator Theory},
      volume={70},
      number={2},
       pages={513\ndash 530},
         url={https://doi.org/10.7900/jot.2011sep20.1915},
      review={\MR{3138368}},
}

\bib{OMeara1971CO-top-paracompact}{article}{
      author={O'Meara, Paul},
       title={On paracompactness in function spaces with the compact-open
  topology},
        date={1971},
        ISSN={0002-9939},
     journal={Proc. Amer. Math. Soc.},
      volume={29},
       pages={183\ndash 189},
         url={https://doi.org/10.2307/2037695},
      review={\MR{276919}},
}

\bib{Reinhart1983Folliations-book}{book}{
      author={Reinhart, Bruce~L.},
       title={Differential geometry of foliations},
      series={Ergebnisse der Mathematik und ihrer Grenzgebiete [Results in
  Mathematics and Related Areas]},
   publisher={Springer-Verlag, Berlin},
        date={1983},
      volume={99},
        ISBN={3-540-12269-9},
         url={https://doi.org/10.1007/978-3-642-69015-0},
        note={The fundamental integrability problem},
      review={\MR{705126}},
}

\bib{Rotman1988Alg-Top-Book}{book}{
      author={Rotman, Joseph~J.},
       title={An introduction to algebraic topology},
      series={Graduate Texts in Mathematics},
   publisher={Springer-Verlag, New York},
        date={1988},
      volume={119},
        ISBN={0-387-96678-1},
         url={https://doi.org/10.1007/978-1-4612-4576-6},
      review={\MR{957919}},
}

\bib{Spanier1966Alg-Top-Book}{book}{
      author={Spanier, Edwin~H.},
       title={Algebraic topology},
   publisher={McGraw-Hill Book Co., New York-Toronto, Ont.-London},
        date={1966},
      review={\MR{0210112}},
}

\bib{tom-Dieck2008Alg-Top-book}{book}{
      author={tom Dieck, Tammo},
       title={Algebraic topology},
      series={EMS Textbooks in Mathematics},
   publisher={European Mathematical Society (EMS), Z\"{u}rich},
        date={2008},
        ISBN={978-3-03719-048-7},
         url={https://doi.org/10.4171/048},
      review={\MR{2456045}},
}

\bib{Tu2004NonHausdorff-gpd-proper-actions-and-K}{article}{
      author={Tu, Jean-Louis},
       title={Non-{H}ausdorff groupoids, proper actions and {$K$}-theory},
        date={2004},
        ISSN={1431-0635},
     journal={Doc. Math.},
      volume={9},
       pages={565\ndash 597 (electronic)},
      review={\MR{2117427 (2005h:22004)}},
}

\bib{Virk-Zastrow2014Comparison-of-Topologies-on-Cov-Space}{article}{
      author={Virk, \v{Z}iga},
      author={Zastrow, Andreas},
       title={The comparison of topologies related to various concepts of
  generalized covering spaces},
        date={2014},
        ISSN={0166-8641},
     journal={Topology Appl.},
      volume={170},
       pages={52\ndash 62},
         url={https://doi.org/10.1016/j.topol.2014.03.011},
      review={\MR{3200389}},
}

\bib{Westman1967Locally-trivial-Cr-gpd}{article}{
      author={Westman, Joel~J.},
       title={Locally trivial {$C^{r}$} groupoids and their representations},
        date={1967},
        ISSN={0030-8730},
     journal={Pacific J. Math.},
      volume={20},
       pages={339\ndash 349},
         url={http://projecteuclid.org/euclid.pjm/1102992831},
      review={\MR{211351}},
}

\bib{Williams2019A-Toolkit-Gpd-algebra}{book}{
      author={Williams, Dana~P.},
       title={A tool kit for groupoid {$C^*$}-algebras},
      series={Mathematical Surveys and Monographs},
   publisher={American Mathematical Society, Providence, RI},
        date={2019},
      volume={241},
        ISBN={978-1-4704-5133-2},
         url={https://doi.org/10.1016/j.physletb.2019.06.021},
      review={\MR{3969970}},
}

\bib{Zabrodsky1964Covering-spaces-of-paracompact-spaces}{article}{
      author={Zabrodsky, A.},
       title={Covering spaces of paracompact spaces},
        date={1964},
        ISSN={0030-8730},
     journal={Pacific J. Math.},
      volume={14},
       pages={1489\ndash 1503},
         url={http://projecteuclid.org/euclid.pjm/1103033821},
      review={\MR{173229}},
}

\end{biblist}
\end{bibdiv}

\end{document}